\newcommand{\intR}{\int_{\R^N}}
\newcommand{\F}{\mathcal{F}}
\newcommand{\R}{\mathbb{R}}
\newcommand{\C}{\mathbb{C}}
\newcommand{\N}{\mathbb{N}}
\newcommand{\ptl}{{\partial}}
\providecommand{\abs}[1]{\lvert#1 \rvert}
\newcommand{\wh}{\widehat }
\newcommand{\K}{\mathcal K}
\newcommand{\U}{\mathcal U}
\newcommand{\RR}{\mathcal R}
\newcommand{\E}{\mathcal E}
\newcommand{\PP}{\mathcal P }
\DeclareMathAlphabet{\mathpzc}{OT1}{pzc}{m}{it}
\newcommand{\w}{\mathpzc w  }
\newcommand{\M}{\mathcal M }
\newcommand{\loc}{\operatorname{loc}}
\providecommand{\norm}[1]{\lVert#1 \rVert}
\renewcommand{\div}{\operatorname{div}}
\renewcommand{\Re}{\operatorname{Re}}
\newcommand{\ve}{\varepsilon }
\numberwithin{equation}{section}
\newtheorem{teo}{Theorem}[section]
\newtheorem{prop}[teo]{Proposition}
\newtheorem{lema}[teo]{Lemma}
\newtheorem{cor}[teo]{Corollary}
\newcommand{\bqq}{\begin{equation*}}
\newcommand{\eqq}{\end{equation*}}
\newcommand{\bq}{\begin{equation}}
\newcommand{\eq}{\end{equation}}
\newcommand{\grad}{\nabla}
\newenvironment{ecu}{\begin{equation}\left\lbrace\begin{aligned}}{\end{aligned}\right.\end{equation}\@ignoretrue}
\newcommand{\sublim}{\operatornamewithlimits{\longrightarrow}}   
\begin{document}
\title[Nonexistence of traveling waves for a nonlocal GP equation]{Nonexistence of traveling waves for a nonlocal Gross--Pitaevskii equation}                                 
    \author{Andr\'e de Laire}                                
    \address{Laboratoire Jacques-Louis Lions\\
Universit\'e Pierre et Marie Curie\\
 Bo\^ite Courier 187\\
 75252 Paris Cedex 05, France}                                    
    \email{delaire@ann.jussieu.fr}                                      
    \thanks{The author is grateful to F. B\'ethuel  for interesting and helpful discussions
and to P. Gravejat for his collaboration in proving Proposition~\ref{lema-tecnico}}                                     
        \keywords{Nonlocal Schr\"odinger equation, Gross--Pitaevskii equation, Traveling waves, 
Pohozaev identities, Nonzero conditions at infinity}                               
    \subjclass[2000]{35Q55; 35Q40; 35Q51; 35B65; 37K40; 37K05; 81Q99.}                                  
    \begin{abstract}
We consider a 
 Gross--Pitaevskii equation with a nonlocal interaction potential.
 We provide sufficient conditions on the potential such that there exists a range of speeds
 in which nontrivial traveling waves do not exist.
\end{abstract}   
\maketitle
\section{Introduction}\label{intro}
\subsection{The problem}
We consider finite energy traveling waves for the 
nonlocal Gross--Pitaevskii equation
\begin{equation} \label{NGP}
i \ptl _t u-\Delta u-u(W*(1-\abs{u} ^2))=0,  \quad \ u(x,t)\in \C, \ x\in \R^N, \ t\in  \R.
\end{equation}
Here $*$ denotes the
convolution in $\R^N$ and $W$ is a real-valued even distribution. The aim of this work is
to provide sufficient  conditions on the potential $W$ such that these traveling waves are necessarily constant
for a certain range of speeds.  Equation  \eqref{NGP}
is Hamiltonian and its energy
\begin{equation*}\label{energy}
E(u(t))=\frac12 \int_{\R^N}\abs{\nabla u(t)}^2\,dx +\frac 14 \int_{\R^N}(W*(1-\abs{u(t)}^2))(1-\abs{u(t)}^2)\,dx
\end{equation*}
is formally conserved. A traveling wave of speed $c$ that 
propagates along the $x_1$-axis is a solution of the form
$$u_c(x,t)=v(x_1-ct,x_\perp), \quad {x_\perp=(x_2,\dots, x_N).}$$
Hence the profile $v$ satisfies
\begin{equation}\label{TW}\tag{NTW$c$}
ic\ptl_1v+\Delta v+v(W*(1-\abs v ^2))=0  \textrm{ in } \R^N
\end{equation}
and by using complex conjugation, we can restrict us to the case $c\geq 0$.
Note that any constant (complex-valued) function $v$ of modulus one
verifies \eqref{TW}, so that we refer to them as the trivial solutions.

Notice that, in the case that $W$ coincides with the Dirac delta function,  \eqref{TW} reduces to
the classical Gross--Pitaevskii equation 
\begin{equation}\label{TW-local}\tag{TW$c$}
ic\ptl_1v+\Delta v+v(1-\abs v ^2)=0  \textrm{ in } \R^N.
\end{equation}
Equation \eqref{TW-local} has been intensively studied in the last years.
We refer to \cite{bethuel0} for a survey.
From now on we suppose that $N\geq 2$ and  we recall the following results.

\begin{teo}[\cite{brezis,bethuel2,gravejat,gravejat-n-2}]\label{teo:brezis}
Let $v\in H_{\loc}^1(\R^N)$
be a finite energy solution  of \eqref{TW-local}.
Assume that one of the following cases hold
\begin{enumerate}[label=\textup{(}{\roman*}\textup{)},ref=({\roman*})]
 \item\label{caso-c-0}  $c=0$.
\item\label{teo:gravejat}  $c> \sqrt 2$.
 \item $N= 2$ and $c= \sqrt 2$.
\end{enumerate}
 Then $v$ is a constant function of modulus one.
 \end{teo}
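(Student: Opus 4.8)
The plan is to combine regularity and decay estimates for finite energy solutions with two Pohozaev-type identities obtained by scaling, and then to argue case by case. First I would establish that any finite energy solution $v$ of \eqref{TW-local} is smooth, that $\abs{v}\to 1$ and $\nabla v\to 0$ at infinity, and—after multiplying $v$ by a constant of modulus one—that $v-1$, $\nabla v$ and $\eta:=1-\abs{v}^2$ decay algebraically (with better rates when $c\ge\sqrt 2$). This regularity, together with the existence of a limit at infinity, is what legitimises the integrations by parts below and the Fourier manipulations later, and it makes the momentum $p:=\tfrac12\intR\Re\big(i\ptl_1 v\,(\bar v-1)\big)$ meaningful. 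Writing $E_{\mathrm{kin}}=\tfrac12\intR\abs{\nabla v}^2$, $K_1=\tfrac12\intR\abs{\ptl_1 v}^2$, $K_\perp=\tfrac12\intR\abs{\nabla_\perp v}^2$ and $E_{\mathrm{pot}}=\tfrac14\intR\eta^2$, I would then test \eqref{TW-local} against $x\cdot\nabla\bar v$ and against $x_1\ptl_1\bar v$ (equivalently, differentiate at $\lambda=1$ the functional $E-cp$ along the dilations $v(\lambda x)$ and $v(x_1,\lambda x_\perp)$, using that $v$ is a critical point of $E-cp$) to obtain the identities
\begin{equation*}
(N-2)E_{\mathrm{kin}}+N E_{\mathrm{pot}}=(N-1)cp,\qquad K_1=K_\perp+E_{\mathrm{pot}}.
\end{equation*}

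The case $c=0$ is then immediate: the first identity reads $(N-2)E_{\mathrm{kin}}+N E_{\mathrm{pot}}=0$, and since $N\ge 2$ both terms are nonnegative, so $E_{\mathrm{pot}}=0$, i.e.\ $\abs{v}\equiv 1$. Consequently \eqref{TW-local} reduces to $\Delta v=0$, so $v$ is a bounded harmonic function on $\R^N$, hence constant by Liouville's theorem (for $N\ge 3$ one even gets $E_{\mathrm{kin}}=0$ directly).

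For $c>\sqrt 2$ the two Pohozaev identities no longer suffice, and I would pass to Fourier variables. Setting $w=v-1$ and separating real and imaginary parts, the equation becomes the linear system
\begin{equation*}
\begin{pmatrix}-(|\xi|^2+2)&-ic\xi_1\\ ic\xi_1&-|\xi|^2\end{pmatrix}\begin{pmatrix}\wh w_1\\ \wh w_2\end{pmatrix}=\begin{pmatrix}\wh N_1\\ \wh N_2\end{pmatrix},
\end{equation*}
with determinant $D(\xi)=|\xi|^2(|\xi|^2+2)-c^2\xi_1^2$ and with $N_1,N_2$ quadratic and of higher order in $w$, so each $\wh N_j$ is a convolution in $\wh w$. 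The number $\sqrt 2$ is precisely the threshold at which $D$ changes sign: for $c<\sqrt 2$ one has $D>0$ away from the origin, while for $c>\sqrt 2$ the zero set $\Gamma=\{D=0\}$ contains a nontrivial hypersurface through the origin. By the decay estimates $\wh w$ and $\wh N_j$ are continuous, so the system can hold only if the nonlinear term lies in the range of the symbol matrix all along $\Gamma$—a genuine codimension-one constraint. Differentiating this relation along $\Gamma$ and using that the nonlinear term is quadratic in $\wh w$, one would propagate the constraint until $\wh w$ is forced to vanish identically, so $v\equiv 1$. I expect this propagation step—and the decay lemma that feeds it—to be the main obstacle.

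Finally, the borderline case $N=2$, $c=\sqrt 2$ would be handled by the same scheme, but with a loss: now $D(\xi)=|\xi|^4+2\xi_2^2\ge 0$ vanishes only at $\xi=0$, so $\Gamma$ degenerates to a single point, $1/D$ is strongly singular there, and the constraint is much weaker. One then needs a sharpened decay analysis together with a careful study of $\wh w$ near the origin, exploiting the anisotropic vanishing $D(\xi)\sim 2\xi_2^2+\xi_1^4$ and features special to dimension two, in order to conclude once more that a nontrivial finite energy solution would have to exhibit oscillatory far-field behaviour incompatible with finiteness of the energy. In each of the three cases one obtains that $v$ is a constant of modulus one.
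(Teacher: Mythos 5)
Your case $c=0$ is essentially the classical Pohozaev argument (Brezis–Merle type) and is correct: the dilation identity $(N-2)E_{\mathrm{kin}}+NE_{\mathrm{pot}}=0$ forces $E_{\mathrm{pot}}=0$, so $\abs v\equiv 1$, and then $\Delta v=0$ with $v$ bounded gives a constant by Liouville. This is exactly how the paper views case~\ref{caso-c-0} and how it is recovered by taking $W=\delta$ in Theorem~\ref{teo-c-0}. Your Fourier setup for case~\ref{teo:gravejat} also has the correct symbol: your determinant $D(\xi)=\abs\xi^2(\abs\xi^2+2)-c^2\xi_1^2$ is precisely the $R(\xi)=\abs\xi^4+2\wh W(\xi)\abs\xi^2-c^2\xi_1^2$ of Section~\ref{fourier} in the local case $\wh W\equiv 1$, and your observation that the nonlinear term must be compatible with the degenerate symbol on $\Gamma=\{D=0\}$ is the same constraint as $\hat H\restriction_\Gamma=0$ in Lemma~\ref{H-cero}.

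However, the mechanism you propose from that constraint is not the one that works, and I do not see how it can be made to work. You want to ``differentiate the relation along $\Gamma$'' and ``propagate the constraint until $\hat w$ is forced to vanish identically.'' There is a real gap here. The nonlinear Fourier data $\hat N_j(\xi)$ are \emph{convolutions}, i.e.\ global integrals of $\hat w$ against itself; the compatibility condition along $\Gamma$ therefore constrains integrals of $\hat w$ over all of $\R^N$, not the pointwise values of $\hat w$ on $\Gamma$. Differentiating along the codimension-one set $\Gamma$ only produces more integral constraints of the same nature, and there is no obvious way to turn a family of such nonlocal constraints into the pointwise statement $\hat w\equiv 0$ on $\R^N$; a nontrivial $\hat w$ has far more degrees of freedom than a curve's worth of equations. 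What Gravejat's argument actually extracts from the constraint is much more modest and much more robust: one works with the scalar quantity $\eta=1-\abs v^2$ (which has the advantage of lying in $L^2(\R^N)$ by finite energy, so $\hat\eta\in L^2$ without any extra decay hypothesis), derives $R\hat\eta=\hat H$ with $\hat H$ continuous, concludes that $\hat H$ vanishes on the zero curves $\gamma^{\pm}_{j,c}$ of $R$ near the origin, and then passes to the limit $\xi\to 0$ along these curves. This yields a \emph{single} integral identity $(1+\ell_c)\hat F(0)=2c\,\hat G_1(0)$, i.e.\ a relation among the energy and momentum integrals (identity~\eqref{id-fourier}). The proof of nonexistence for $c>\sqrt 2$ is then a \emph{contradiction} argument: this one identity, together with the two Pohozaev identities and the nonnegativity of the kinetic and potential terms, is incompatible with $v$ being nonconstant (in the paper this is packaged as the linear-algebra argument via Farkas' lemma in Section~\ref{main}). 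Nothing is ever shown to vanish pointwise on the Fourier side.

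A secondary issue: you assert that $\hat w$ is continuous. For $w=v-1$ one only has $w\in L^{2N/(N-2)}(\R^N)$ a priori, so $\hat w$ need not be continuous; establishing continuity would require algebraic decay of $w$, which for $c>\sqrt 2$ is itself a nontrivial result. The scalar reformulation via $\eta$ avoids this entirely: one needs continuity of $\hat F$ and $\hat G$ (which follows from $F,G\in L^1(\R^N)$, a consequence of the regularity analysis in Section~\ref{section-regularity}), while $\hat\eta$ only needs to be in $L^2$. Your sketch of case (iii) ($N=2$, $c=\sqrt2$) correctly identifies that $\Gamma$ degenerates to a point and that anisotropic behaviour of $D$ becomes crucial, but again the strategy there is not to force $\hat w\equiv 0$; it is a refined version of the same identity-plus-Pohozaev contradiction, relying on sharp decay estimates for $\eta$. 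In summary: your case (i) is correct and agrees with the paper; for cases (ii) and (iii) the symbol and the compatibility constraint are the right objects, but the ``propagation to $\hat w\equiv 0$'' step should be replaced by passage to the limit at the origin to obtain the integral identity~\eqref{id-fourier}, and then a contradiction with the Pohozaev identities.
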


\begin{teo}[\cite{bethuel2,bethuel-orlandi,chiron,bethuel,maris}]\label{teo:maris}
There is some nonempty set $A\subset (0,\sqrt 2)$ such that for all $c\in A$ there exists a nonconstant
 finite energy solution of \eqref{TW-local}. 
Furthermore, assume that  $N\geq 3$.  Then there exists a nonconstant finite energy 
 solution of \eqref{TW-local} for all $0<c< \sqrt 2$.
 \end{teo}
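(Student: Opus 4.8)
This statement collects results from several papers; I will sketch the variational strategy behind its sharpest part --- existence of a nonconstant solution for \emph{every} $c\in(0,\sqrt2)$ when $N\ge3$ --- since the general assertion (a nonempty $A\subset(0,\sqrt2)$, any $N\ge2$) follows a fortiori, and can also be obtained by more elementary perturbative or minimax constructions. The idea is to produce traveling waves as constrained minimizers: minimize the energy at fixed momentum and let the Lagrange multiplier be the speed $c$.

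First I would fix the functional setting. Work in the energy space $\E=\{v\in H^1_{\loc}(\R^N):\nabla v\in L^2(\R^N),\ 1-\abs v^2\in L^2(\R^N)\}$, on which $E$ is well defined; for $N\ge3$ this is (up to a translation) a genuine metric vector space, whereas for $N=2$ it is more delicate, which is exactly why the full range is claimed only for $N\ge3$. Introduce the momentum
\[
P(v)=\frac12\intR\langle i\ptl_1 v,\,v\rangle,
\]
suitably renormalized near infinity, and set $E_{\min}(p)=\inf\{E(v):v\in\E,\ P(v)=p\}$ for $p>0$. Two a priori facts drive the analysis: coercivity (a bound on $E(v)$ controls $\norm{\nabla v}_{L^2}$ and $\norm{1-\abs v^2}_{L^2}$), and a Sobolev-type inequality controlling $\abs{P(v)}$ by $E(v)$, together with its low-momentum refinement $E_{\min}(p)=\sqrt2\,p-o(p)$ as $p\to0^+$, which reflects that $\sqrt2$ is the sound speed. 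From these one checks that $0<E_{\min}(p)<\infty$ and that $p\mapsto E_{\min}(p)$ is concave, nondecreasing, and subadditive.

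The core of the argument is a concentration-compactness analysis of a minimizing sequence $(v_n)$ for $E_{\min}(p)$. Vanishing is ruled out using the Sobolev inequality: if $1-\abs{v_n}^2$ were to spread out, $P(v_n)$ could not stay equal to $p>0$. Dichotomy is ruled out via the strict subadditivity
\[
E_{\min}(p)<E_{\min}(p_1)+E_{\min}(p-p_1),\qquad 0<p_1<p,
\]
which follows from concavity of $E_{\min}$ with $E_{\min}(0)=0$ once one knows $E_{\min}$ is not linear (guaranteed by $c(p)<\sqrt2$ for $p>0$ together with slope $\sqrt2$ at the origin). Hence, after translation, $(v_n)$ is precompact in $\E$ and converges to a minimizer $v_p$. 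I expect this step to be the main obstacle: making concentration-compactness rigorous on a space that is not $L^2$-based, splitting modulus and phase near the vacuum set $\{v=0\}$, and proving the strict subadditivity carefully.

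Finally, the Euler--Lagrange equation for $v_p$ is precisely \eqref{TW-local} with $c=c(p)$ the Lagrange multiplier; elliptic regularity promotes $v_p$ to a smooth finite-energy solution, and it is nonconstant for every $p>0$. By concavity, $c(p)=E_{\min}'(p)$ for a.e.\ $p$, so $c(p)>0$ (strict monotonicity) and $c(p)<\sqrt2$ (from the low-momentum behavior of $E_{\min}$). To see that the speeds so obtained exhaust $(0,\sqrt2)$, one uses continuity of $p\mapsto c(p)$ on $(0,\infty)$ together with $c(p)\to\sqrt2$ as $p\to0^+$ and $c(p)\to0$ as $p\to\infty$ (construct trial maps of large momentum and controlled energy, e.g.\ slowly modulated phase functions); the intermediate value property then yields a nonconstant finite energy solution for every $c\in(0,\sqrt2)$, proving the ``furthermore'' part, and taking $A$ to be this set gives the first assertion.
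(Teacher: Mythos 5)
This theorem is stated in the paper as a background result with citations (\cite{bethuel2,bethuel-orlandi,chiron,bethuel,maris}) and is \emph{not} proved there, so there is no in-paper proof to compare against. Your sketch is a faithful outline of the constrained-minimization strategy of Mari\c{s}' paper for $N\geq 3$: minimize $E$ at fixed renormalized momentum, prove concavity and strict subadditivity of $E_{\min}$, run concentration--compactness, and recover the speed as a Lagrange multiplier. That is indeed the route by which the full range $(0,\sqrt 2)$ is obtained.

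Two remarks, though. First, your claim that the first sentence (``some nonempty $A\subset(0,\sqrt 2)$, any $N\geq 2$'') ``follows a fortiori'' is not quite right: for $N=2$ the constrained-minimization picture you describe does not directly yield the result, and the cited works (B\'ethuel--Saut, B\'ethuel--Gravejat--Saut, Chiron) use genuinely different constructions --- vortex pairs and mountain-pass arguments --- so the two-dimensional case is a separate proof, not a corollary. Second, the step ``the speeds so obtained exhaust $(0,\sqrt 2)$ by continuity of $p\mapsto c(p)$'' is glossed over: $c(p)$ is a derivative of a concave function and may jump, so the correct argument is that the subdifferentials of $E_{\min}$ sweep out $(0,\sqrt 2)$ as $p$ varies, and one must check that every value in a given subdifferential is actually realized as the multiplier of some minimizer at that $p$. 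Likewise, the renormalization of the momentum $P$ on $\E(\R^N)$ is a substantial technical point (the integrand $\langle i\partial_1 v,v\rangle$ need not be integrable), not a remark; your sketch is aware of it but understates its role. None of this invalidates the outline, but these are precisely the places where the cited proofs do real work.
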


It would be reasonable to expect to generalize in some way these theorems
to the nonlocal equation \eqref{TW}. The aim of this paper is
to investigate the analogue of  Theorem~\ref{teo:brezis} in the cases \ref{caso-c-0} and \ref{teo:gravejat}.  Before stating our precise results,
we give some  motivation about the critical speed.

\subsection{Physical motivation}
As explained in \cite{delaire}, \eqref{NGP}
can be considered as a generalization of the equation
\begin{equation}
i\hbar\partial_t\Psi(x,t)=\frac{\hbar^2}{2m}\Delta\Psi(x,t)+\Psi(x,t) \int_{\R^N} \abs{\Psi(y,t)}^2V(x-y)\,dy, 
\text{ in $\R^N\times \R,$}
\label{GP-full}
\end{equation}
introduced by Gross \cite{gross} and 
Pitaevskii \cite{pitaevskii} to describe the kinetic of a weakly interacting Bose gas
of bosons of mass $m$, where $\Psi$ is the wavefunction governing the condensate 
in the Hartree approximation
 and $V$ describes the interaction between bosons.

In the most typical approximation, $V$ is considered
as a Dirac delta function. Then this model has  applications  in several  areas of physics, such as
superfluidity, nonlinear optics and Bose--Einstein condensation \cite{JPR1,JPR2,kivshar,coste}.
It seems then natural to analyze equation \eqref{GP-full} for more
general interactions. Indeed, in the study of superfluidity, supersolids
and Bose--Einstein condensation, different types of nonlocal potentials have been proposed 
\cite{bogo,berloff0,deconinck,kraenkel,rica0,rica,yi,cuevas,remi,aftalion}.

Let us now proceed formally and consider a constant function $u_0$ of modulus one.
Since \eqref{NGP} is invariant by a change of phase, we can assume $u_0=1$.
Then the linearized equation of \eqref{NGP} at $u_0$ is given by
\bq\label{linear}
i \ptl_t \tilde u -\Delta \tilde u +2 W*\Re( \tilde u)=0.
\eq
Writing $\tilde u=\tilde u_1+i\tilde u_2$  and taking real and imaginary parts in \eqref{linear}, 
we get 
\begin{align*}
-\ptl_t \tilde u_2 -\Delta \tilde u_1+2 W*\tilde u_1=0,\\
\ptl_t \tilde u_1-\Delta \tilde u_2=0,
\end{align*}
from where we deduce that 
\bq\label{linear2}
\ptl^2_{tt} \tilde u-2 W*(\Delta \tilde u)+\Delta^2 \tilde u=0.
\eq
By imposing $\tilde u=e^{i(\xi.x-wt)}$, $w\in \R$, $\xi\in \R^N$, as a solution of \eqref{linear2},
we obtain the dispersion relation
\bq\label{bogo}
(w(\xi))^2=\abs{\xi}^4+2\widehat W(\xi)\abs{\xi}^2,
\eq
where $\wh W$ denotes the Fourier transform of $W$.
Supposing that $\wh W$ is positive and continuous at the origin, 
we get  in the long wave regime, i.e. $\xi\sim 0$,
\bqq
w(\xi)\sim (2\widehat W(0))^{1/2}\abs{\xi}.
\eqq
Consequently, in this regime we can identify $(2\widehat W(0))^{1/2}$ as the speed of sound waves (also called sonic speed),
so that we set 
\bqq
c_s(W)=(2\widehat W(0))^{1/2}.
\eqq

The dispersion relation  \eqref{bogo} was first observed by Bogoliubov \cite{bogo}
on the study of Bose--Einstein gas and under some physical considerations he established
that the gas should move with a speed less than $c_s(W)$ to preserve its superfluid 
properties. From a mathematical point of view and comparing with Theorems~\ref{teo:brezis} and \ref{teo:maris}, 
this encourages us to think that the nonexistence of a nontrivial 
 solution of \eqref{TW} is related to the condition
\bq\label{lim-W} c> c_s(W).\eq
Actually, in Subsection \ref{statement} we provide results in this direction  and in
Subsection~\ref{examples} we specify the discussion for some explicit potentials $W$
which are physically relevant.

\subsection{Hypotheses on  $W$}
Let us introduce the spaces 
$\M_{p,q}(\R^N)$ of tempered distributions $W$ such that the linear operator
$f\mapsto W*f$ is bounded from $L^p(\R^N)$ to  $L^q(\R^N)$. We will use the following 
hypotheses on $W$. 
\begin{enumerate}[label=({H\arabic*}),ref=\textup{({H\arabic*})}]
\item\label{H0} $W$ is a real-valued even temperated distribution.
  \item\label{H-infty} $W\in \M_{2,2}(\R^N)$. Moreover, if $N \geq 4$, 
\bq\label{hyp-high}
 W\in \M_{N/(N-1),\infty}(\R^N)\cap \M_{2N/(N-2),\infty}(\R^N)\cap \M_{2N/(N-2),2N/(N-2)}(\R^N).
\eq
 \item \label{H-resto}  $\wh W$ is differentiable a.e. on $\R^N$ and for all $j,k\in\{1,\dots,N\}$ the map 
 $\xi \to \xi_j\ptl_k\wh W(\xi)$ is bounded and continuous a.e. on $\R^N$.
\item \label{W-positivo} $\wh W \geq 0$ a.e. on $\R^N$.
\item \label{H-reg} $\wh W$ is of class $C^2$ in a neighborhood of the origin and $\wh W(0)>0.$
 \end{enumerate}
Recall that the condition $W\in \M_{2,2}(\R^N)$ is equivalent to  $\wh W\in L^\infty(\R^N)$ (see e.g. \cite{grafakos}).
Therefore \ref{W-positivo} makes sense provided that \ref{H-infty} holds. It is proved in  \cite{delaire} that under the assumptions
\ref{H0}, \ref{H-infty} and \ref{W-positivo} the Cauchy problem for \eqref{NGP} with
nonzero condition at infinity is globally well-posed. Actually,  condition \eqref{hyp-high}
is more restrictive that the one used in \cite{delaire} in dimension $N\geq 4$, but we need it to ensure the regularity
of solutions. More precisely, in Section~\ref{section-regularity} we prove that under the hypothesis \ref{H-infty}, the 
solutions of \eqref{TW} are smooth and satisfy
$$\abs{v(x)}\to 1, \ \grad v(x)\to 0, \qquad \text{ as }\abs{x}\to\infty.$$
On the other hand, by Lemma~\ref{lema-reg2},  \eqref{hyp-high}
is at least fulfilled for $W\in L^1(\R^N)\cap L^N(\R^N)$.

Assumption \ref{H-infty} also implies that $E(v)$ is finite in the energy space
$$\E(\R^N)=\{ \varphi \in H^1_{\loc}(\R^N) :  1-\abs{\varphi}^2 \in L^2(\R^N), \grad \varphi \in L^2(\R^N)\}.$$
Furthermore, if \ref{W-positivo} also holds, then by the Plancherel identity 
$$E(v)=  \frac12 \int_{\R^N}\abs{\nabla v}^2+\frac 1{4(2\pi)^{N}} \int_{\R^N}\wh W\,\abs{\widehat{1- \abs{v}^2}}^2 \geq 0.$$
In Subsection \ref{examples} we show several examples of distributions $W$
satisfying the conditions \ref{H0}--\ref{H-reg}.

\subsection{Statement of the results}\label{statement}

\begin{teo}\label{teo1}
Assume that  $W$ satisfies  \ref{H0}--\ref{H-reg}. 
Let $c>c_s(W)$ and suppose  that there exist constants  $\sigma_1,\dots,\sigma_N\in \R$ such that
\bq
\label{sigma-1-1} \wh W(\xi)+\alpha_c \sum_{k=2}^N\sigma_k \xi_k\ptl_k \wh W(\xi)-\sigma_1 \xi_1\ptl_1\wh W(\xi)\geq  0, \text{ for a.a. } \xi \in \R^N, 
\eq
and 
\bq
\label{sigma-2-2} \sum_{k=2}^N \sigma_k +\min\left\{-\sigma_1-1,\frac{\sigma_1-1}{\alpha_c+2},2\alpha_c \sigma_j+\sigma_1-1\right\}\geq 0,  
\eq
for all $j\in \{2,\dots, N\}$, where $\alpha_{c}:={c^2}/(c_s(W))^2-1$.
Then nontrivial solutions of \eqref{TW} in $\E(\R^N)$ do not exist. 
\end{teo}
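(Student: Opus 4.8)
The plan is to exploit the Pohozaev-type identities for \eqref{TW}, which are the natural tool here (the subject classification lists them explicitly). First I would establish, using the regularity and decay results of Section~\ref{section-regularity} (valid under \ref{H0}, \ref{H-infty}), that a finite-energy solution $v\in\E(\R^N)$ of \eqref{TW} is smooth with $\abs v\to 1$, $\grad v\to 0$ at infinity, so that all the integrations by parts below are justified. The starting point is the identity obtained by multiplying \eqref{TW} by $\bar v$, integrating the real part, and also by testing against the dilation field $x\cdot\grad\bar v$ and against the directional fields $x_j\ptl_j\bar v$ for each $j\in\{1,\dots,N\}$ separately. Because of the anisotropy introduced by the $ic\ptl_1 v$ term, one must keep the $x_1$-direction separate from the transverse directions $x_\perp$, which is exactly why $N-1$ parameters $\sigma_2,\dots,\sigma_N$ (one per transverse direction) plus one parameter $\sigma_1$ appear. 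The nonlocal term $v(W*(1-\abs v^2))$ is handled on the Fourier side: writing $\eta:=1-\abs v^2$, the potential energy is $\frac1{4(2\pi)^N}\int \wh W\,\abs{\wh\eta}^2$, and under the scaling $x\mapsto\lambda x$ in the $x_j$ variable the quantity $\xi_j\ptl_j\wh W(\xi)$ is precisely what emerges — this uses \ref{H-resto} to make sense of $\xi_j\ptl_j\wh W$ as a bounded a.e.-defined function.

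The key algebraic step is then to form the linear combination
\[
I_0 + \sum_{j=1}^N \sigma_j\, I_j,
\]
where $I_0$ is the identity from multiplying by $\bar v$ and $I_j$ is the Pohozaev identity associated with the $x_j\ptl_j$ dilation, and to choose the constant $\alpha_c=c^2/c_s(W)^2-1$ so that the momentum-type terms coming from $ic\ptl_1 v$ combine favorably with $c_s(W)^2=2\wh W(0)$. After this combination, the left-hand side should reduce to a sum of manifestly nonnegative quantities: a multiple of $\int\abs{\ptl_1 v}^2$, multiples of $\int\abs{\ptl_j v}^2$ for $j\geq 2$, and a Fourier-side term $\frac1{4(2\pi)^N}\int\bigl(\wh W+\alpha_c\sum_{k\geq2}\sigma_k\xi_k\ptl_k\wh W-\sigma_1\xi_1\ptl_1\wh W\bigr)\abs{\wh\eta}^2$, whose nonnegativity is guaranteed precisely by hypothesis \eqref{sigma-1-1}. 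The coefficients multiplying $\int\abs{\ptl_1 v}^2$ and $\int\abs{\ptl_j v}^2$ will be exactly the three quantities inside the $\min$ in \eqref{sigma-2-2} (up to the common additive $\sum_{k\geq2}\sigma_k$), so \eqref{sigma-2-2} forces every coefficient in the combination to be $\geq 0$ while the total sums to something $\leq 0$ — a standard bookkeeping check. One has to be careful that \eqref{sigma-1-1} together with $\wh W(0)>0$ and the continuity from \ref{H-reg} guarantees the Fourier multiplier stays strictly controlled near the origin, so that the inequality is not vacuous.

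Having all coefficients nonnegative and the total $\leq 0$, I conclude that each term vanishes: $\grad v\equiv 0$ and $\int\wh W\,\abs{\wh\eta}^2=0$. From $\grad v\equiv 0$ and $\abs v\to1$ at infinity we get $\abs v\equiv1$, hence $\eta\equiv0$, and $v$ is constant of modulus one — the trivial solution. The main obstacle I anticipate is not the final bookkeeping but rigorously deriving the Pohozaev identities in this nonlocal, anisotropic setting: justifying the integrations by parts requires good decay estimates on $v-($its limit at infinity$)$ and on $\grad v$, $\grad^2 v$, and handling the convolution term $W*\eta$ demands care since $W$ is only a tempered distribution in a multiplier class — one typically proves the identities first for a truncated/regularized problem (multiplying the test field by a cutoff $\chi(x/R)$ and letting $R\to\infty$) and controls the error terms using \ref{H-infty}, \eqref{hyp-high} and the decay of $\eta$. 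This is presumably where Proposition~\ref{lema-tecnico} (credited to P.~Gravejat in the acknowledgements) enters.
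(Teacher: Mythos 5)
Your proposal captures the Pohozaev side of the argument (Proposition~\ref{pohozaev}, whose nonlocal term indeed requires the technical Proposition~\ref{lema-tecnico}), but it is missing the second, equally essential ingredient, and the missing piece is exactly where $\alpha_c$ comes from. You claim that multiplying the equation by $\bar v$ and combining with the Pohozaev identities will let you ``choose $\alpha_c$ so that the momentum terms combine favorably with $c_s(W)^2=2\wh W(0)$.'' This does not work: the Pohozaev identity in each transverse direction carries the momentum $\PP$ with the fixed coefficient $-c/2$, independent of $\wh W(0)$, and the ``multiply by $\bar v$'' relation yields (formally) something like $\int\bigl(|\grad v|^2+\eta\,W*\eta\bigr)-\wh W(0)\int\eta=-c\,\PP'$, which both has the wrong coefficient $c$ in front of the momentum and introduces the extraneous quantity $\int\eta$ (not even obviously finite for $\eta\in L^2$). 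No linear combination of these identities can produce a coefficient depending on the ratio $c^2/c_s(W)^2$.

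What the paper actually uses in place of your $I_0$ is Proposition~\ref{prop-fourier}: from \eqref{TW} one derives the fourth-order equation $\Delta^2\eta-2\Delta(W*\eta)+c^2\ptl^2_{11}\eta=-\Delta F+2c\,\ptl_1(\div G)$, and on the Fourier side the identity $R(\xi)\wh\eta(\xi)=H(\xi)$ with $R(\xi)=|\xi|^4+2\wh W(\xi)|\xi|^2-c^2\xi_1^2$. Because $\wh\eta\in L^2$ and $R$ vanishes on a curve $\Gamma_{j,c}$ through the origin, the continuous function $H$ must vanish on that curve; letting the curve parameter tend to $0$ and using that the curve has slope $\pm\sqrt{\alpha_c}$ at the origin (this is the Morse-lemma content of Lemma~\ref{curva}, valid under \ref{H-reg} with $\alpha_c>0$) yields the integral identity $\int\bigl(|\grad v|^2+\eta\,W*\eta\bigr)=-\frac{c\,\alpha_c}{1+\alpha_c}\,\PP$. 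This is where $\alpha_c=c^2/c_s(W)^2-1$ genuinely enters, and it is a different mechanism than anything your combination $I_0+\sum_j\sigma_j I_j$ can produce. Once you add this identity to the list, the remainder of your sketch (collect coefficients, check nonnegativity using \eqref{sigma-1-1} and \eqref{sigma-2-2}, conclude) does reflect the paper's endgame, which it phrases via Farkas' lemma on a linear system in $(\K_1,\dots,\K_N,\U)$.
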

To apply Theorem~\ref{teo1} we need to verify the existence of the constants
$\sigma_1,\dots,\sigma_N$ satisfying  \eqref{sigma-1-1} and \eqref{sigma-2-2}.
To avoid this task,  we provide two corollaries 
where the conditions for the nonexistence of traveling waves are expressed 
only in terms of $W$.
\begin{cor}\label{cor-teo1}
 Assume that  $W$ satisfies  \ref{H0}--\ref{H-reg} and also that
\bq\label{hyp-cor}
 \wh W(\xi)\geq \max\left\{1,\frac{2}{N-1}\right\} \sum_{k=2}^N\abs{ \xi_k\ptl_k \wh W(\xi)}+  \abs{\xi_1\ptl_1\wh W(\xi)},  \text{ for a.a. } \xi \in \R^N.
\eq
Suppose that $c>c_s(W)$. Then nontrivial solutions of \eqref{TW} in $\E(\R^N)$ do not exist.  
\end{cor}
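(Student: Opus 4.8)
The plan is to deduce Corollary~\ref{cor-teo1} from Theorem~\ref{teo1} by exhibiting an explicit admissible choice of the constants $\sigma_1,\dots,\sigma_N$ for which the two structural conditions \eqref{sigma-1-1} and \eqref{sigma-2-2} follow from the single pointwise bound \eqref{hyp-cor}. Since $c>c_s(W)$ we have $\alpha_c=c^2/(c_s(W))^2-1>0$, so dividing by $\alpha_c$ is harmless. The shape of \eqref{hyp-cor}, which carries the coefficient $1$ in front of $\abs{\xi_1\ptl_1\wh W}$ and the coefficient $\max\{1,2/(N-1)\}$ in front of $\sum_{k=2}^{N}\abs{\xi_k\ptl_k\wh W}$, suggests taking
\[
\sigma_1=-1,\qquad \sigma_2=\cdots=\sigma_N=\frac1{\alpha_c}\max\Bigl\{1,\frac2{N-1}\Bigr\}=:\sigma>0 ;
\]
the symmetry of \eqref{sigma-1-1}, \eqref{sigma-2-2} and \eqref{hyp-cor} under permutations of $x_2,\dots,x_N$ makes the equal choice $\sigma_2=\cdots=\sigma_N$ natural.

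For \eqref{sigma-1-1}, with this choice the left-hand side equals $\wh W(\xi)+\alpha_c\sigma\sum_{k=2}^N\xi_k\ptl_k\wh W(\xi)+\xi_1\ptl_1\wh W(\xi)$. Since $\alpha_c\sigma=\max\{1,2/(N-1)\}\ge 0$, the triangle inequality bounds this from below by $\wh W(\xi)-\max\{1,2/(N-1)\}\sum_{k=2}^N\abs{\xi_k\ptl_k\wh W(\xi)}-\abs{\xi_1\ptl_1\wh W(\xi)}$, which is $\ge 0$ for a.a.\ $\xi\in\R^N$ exactly by \eqref{hyp-cor}. Hence \eqref{sigma-1-1} holds.

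For \eqref{sigma-2-2}, the three quantities inside the minimum are $-\sigma_1-1=0$, $(\sigma_1-1)/(\alpha_c+2)=-2/(\alpha_c+2)\in(-1,0)$, and $2\alpha_c\sigma_j+\sigma_1-1=2\max\{1,2/(N-1)\}-2\ge 0$, so the minimum equals $-2/(\alpha_c+2)$ for every $j\in\{2,\dots,N\}$. Thus \eqref{sigma-2-2} reduces to $(N-1)\sigma\ge 2/(\alpha_c+2)$, i.e.\ $\max\{N-1,2\}\ge 2\alpha_c/(\alpha_c+2)$ after multiplying by $\alpha_c$ and using $(N-1)\sigma\alpha_c=\max\{N-1,2\}$. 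The left-hand side is $\ge 2$ and the right-hand side is $<2$, so the inequality is verified. With both \eqref{sigma-1-1} and \eqref{sigma-2-2} in force, Theorem~\ref{teo1} applies and gives that nontrivial solutions of \eqref{TW} in $\E(\R^N)$ do not exist.

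The argument presents no real difficulty beyond Theorem~\ref{teo1}: the corollary is essentially an unpacking of it. The one point deserving attention is that the value $\alpha_c\sigma=\max\{1,2/(N-1)\}$ must be simultaneously small enough to run the triangle-inequality estimate for \eqref{sigma-1-1} and large enough (in tandem with $\sigma_1=-1$) to force the minimum in \eqref{sigma-2-2} to be the innocuous term $-2/(\alpha_c+2)$ rather than $2\alpha_c\sigma-2$; this is precisely the balance dictated by the constants appearing in \eqref{hyp-cor}, and one should keep the case $N=2$ (where the factor is $2$) separate from $N\ge 3$ (where it is $1$) in the bookkeeping.
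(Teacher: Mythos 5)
Your proof is correct and follows the same strategy as the paper: specialize Theorem~\ref{teo1} with $\sigma_1=-1$ and $\sigma_2=\cdots=\sigma_N$ equal, verify \eqref{sigma-2-2} directly, and derive \eqref{sigma-1-1} from \eqref{hyp-cor} by the triangle inequality. The only difference is the numerical choice of the common value: you take $\sigma=\frac{1}{\alpha_c}\max\{1,\tfrac{2}{N-1}\}$ (saturating the bound $\alpha_c\sigma\le\max\{1,\tfrac{2}{N-1}\}$ needed for the triangle-inequality step), whereas the paper takes $\bar\sigma=\max\{\tfrac{2}{(N-1)(\alpha_c+2)},\tfrac{2}{N-1+\alpha_c}\}$. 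Your choice is in fact cleaner: with the paper's $\bar\sigma$ the asserted bound $\alpha_c\bar\sigma\le\max\{1,\tfrac{2}{N-1}\}$ fails when $N\ge 3$ and $\alpha_c>N-1$ (it appears the second term should read $\tfrac{2}{N-1+2\alpha_c}$), while your pick makes both \eqref{sigma-1-1} and \eqref{sigma-2-2} transparent for all $\alpha_c>0$.
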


\begin{cor}\label{cor-teo}
 Assume that  $W$ satisfies \ref{H0}--\ref{H-reg}. Suppose that  
\bq\label{ine:cor}
c_s(W)<c \leq c_s(W)\left( 1+\inf_{\xi\in \R^N} \frac{(N-1)\wh W(\xi)}{\sum_{k=2}^N \abs{\xi_k\ptl_k \wh W(\xi)}}\right)^{1/2}, \text{ for a.a. } \xi \in \R^N.
\eq
Then nontrivial solutions of \eqref{TW} in $\E(\R^N)$ do not exist.  
\end{cor}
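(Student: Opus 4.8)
The plan is to deduce Corollary~\ref{cor-teo} directly from Theorem~\ref{teo1} by producing an explicit admissible choice of the constants $\sigma_1,\dots,\sigma_N$. First I would record what the hypothesis on $c$ says: since $c>c_s(W)$ we have $\alpha_c=c^2/(c_s(W))^2-1>0$, and squaring the upper bound in \eqref{ine:cor} shows it is equivalent to
$$\alpha_c\le \inf_{\xi\in\R^N}\frac{(N-1)\,\wh W(\xi)}{\sum_{k=2}^N\abs{\xi_k\ptl_k\wh W(\xi)}},$$
hence to the pointwise bound
$$\alpha_c\sum_{k=2}^N\abs{\xi_k\ptl_k\wh W(\xi)}\le (N-1)\,\wh W(\xi)\qquad\text{for a.a. }\xi\in\R^N,$$
which also holds trivially at the points where the sum on the left vanishes, because $\wh W\ge 0$ by \ref{W-positivo}.

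Next I would try the symmetric choice $\sigma_1=0$ and $\sigma_2=\dots=\sigma_N=\tfrac1{N-1}$, which is legitimate since $N\ge 2$. With $\sigma_1=0$, the left-hand side of \eqref{sigma-1-1} is
$$\wh W(\xi)+\frac{\alpha_c}{N-1}\sum_{k=2}^N\xi_k\ptl_k\wh W(\xi)\ \ge\ \wh W(\xi)-\frac{\alpha_c}{N-1}\sum_{k=2}^N\abs{\xi_k\ptl_k\wh W(\xi)}\ \ge\ 0$$
for a.a. $\xi\in\R^N$, the last inequality being precisely the pointwise bound above; so \eqref{sigma-1-1} holds. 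For \eqref{sigma-2-2}, for every $j\in\{2,\dots,N\}$ its left-hand side equals
$$(N-1)\cdot\frac1{N-1}+\min\Big\{-1,\ \frac{-1}{\alpha_c+2},\ \frac{2\alpha_c}{N-1}-1\Big\}=1+\min\Big\{-1,\ \frac{-1}{\alpha_c+2},\ \frac{2\alpha_c}{N-1}-1\Big\}.$$
Since $\alpha_c>0$ we have $\alpha_c+2>1$, so $-1/(\alpha_c+2)>-1$, and also $2\alpha_c/(N-1)-1>-1$; hence the minimum equals $-1$ and the whole expression equals $1-1=0\ge 0$. Thus both \eqref{sigma-1-1} and \eqref{sigma-2-2} are satisfied, and Theorem~\ref{teo1} yields the nonexistence of nontrivial solutions of \eqref{TW} in $\E(\R^N)$.

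There is no genuine obstacle here: the argument is a specialization of Theorem~\ref{teo1}, and the only content is the choice of the $\sigma_k$. The value $\sigma_k=1/(N-1)$ is in fact forced once one takes $\sigma_1=0$ with all the remaining $\sigma_k$ equal, since \eqref{sigma-2-2} then demands $\sigma_k\ge 1/(N-1)$ while \eqref{sigma-1-1}, in the worst case for the sign of $\xi_k\ptl_k\wh W$, demands $\sigma_k\le 1/(N-1)$; at this borderline value \eqref{sigma-1-1} reduces exactly to the bound appearing in \eqref{ine:cor}. The one point that needs a little care is the use of $\alpha_c>0$ when evaluating the minimum in \eqref{sigma-2-2}, which guarantees that its first entry $-1$ is the operative one.
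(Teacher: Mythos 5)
Your proof is correct and follows essentially the same route as the paper: both take $\sigma_1=0$ and $\sigma_2=\dots=\sigma_N=1/(N-1)$, verify \eqref{sigma-2-2} directly (with the minimum equal to $-1$ because $\alpha_c>0$), translate the bound on $c$ into the pointwise inequality $\alpha_c\sum_{k\ge 2}\abs{\xi_k\ptl_k\wh W}\le (N-1)\wh W$ to obtain \eqref{sigma-1-1}, and then invoke Theorem~\ref{teo1}. Your pointwise treatment of the set where the denominator vanishes replaces the paper's separate case $m=+\infty$ and is, if anything, a little cleaner.
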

Concerning the static waves, we have the following result. 
\begin{teo}\label{teo-c-0}
Assume that  $W$ satisfies  \ref{H0}--\ref{W-positivo}.  Suppose that  $c=0$ and that
\bq\label{der-c-0}
\xi_j\ptl_j \wh W(\xi)\leq 0, \text{ for a.a. } \xi \in \R^N,
\eq
for all $j\in \{1,\dots, N\}.$
Then nontrivial solutions of \eqref{TW} in $\E(\R^N)$ do not exist.  
\end{teo}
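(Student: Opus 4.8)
The plan is to use Pohozaev-type identities together with the positivity hypothesis \ref{W-positivo} to force the energy of a solution to vanish. First I would take a finite energy solution $v\in\E(\R^N)$ of \eqref{TW} with $c=0$, i.e.\ a solution of $\Delta v + v(W*(1-\abs v^2))=0$. By the regularity theory established in Section~\ref{section-regularity} (valid under \ref{H-infty}), $v$ is smooth with $\abs{v(x)}\to 1$ and $\grad v(x)\to 0$ as $\abs x\to\infty$, so that all the integrations by parts below are justified and the boundary terms at infinity vanish. The key integral quantities are the kinetic energy $\int_{\R^N}\abs{\grad v}^2$, the partial kinetic energies $\int_{\R^N}\abs{\ptl_j v}^2$, and the potential term $\int_{\R^N}(W*(1-\abs v^2))(1-\abs v^2)$.

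The next step is to derive the relevant Pohozaev identities. Testing \eqref{TW} (with $c=0$) against $v$ itself gives $\int_{\R^N}\abs{\grad v}^2=\int_{\R^N}(W*(1-\abs v^2))\abs v^2$. Testing against the dilation/translation fields $x_j\ptl_j v$ (no sum on $j$) and integrating by parts yields, for each $j\in\{1,\dots,N\}$, an identity relating $\int_{\R^N}\abs{\grad v}^2$, $\int_{\R^N}\abs{\ptl_j v}^2$, and a term involving $W$ and $1-\abs v^2$; the convolution term, after using that $W$ is even, produces an expression whose Fourier-side form is $\int_{\R^N}\wh W\,\abs{\widehat{1-\abs v^2}}^2$ together with a piece $\int_{\R^N}\xi_j\ptl_j\wh W\,\abs{\widehat{1-\abs v^2}}^2$ (this is exactly why hypotheses \ref{H-resto}-type control on $\xi_j\ptl_j\wh W$ is needed to make sense of these integrals). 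Summing the $j$-identities over $j=1,\dots,N$ and combining with the test-against-$v$ identity, I expect to obtain a clean relation of the schematic form
\bqq
a\intR\abs{\grad v}^2 + \frac{b}{(2\pi)^N}\intR\wh W\,\abs{\widehat{1-\abs v^2}}^2 + \frac{1}{(2\pi)^N}\intR\Big(\sum_{j=1}^N\xi_j\ptl_j\wh W\Big)\abs{\widehat{1-\abs v^2}}^2 = 0,
\eqq
with positive constants $a,b$.

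The final step is to read off nonexistence. By \ref{W-positivo} we have $\wh W\ge 0$, so the first two terms are nonnegative; by hypothesis \eqref{der-c-0}, $\xi_j\ptl_j\wh W\le 0$ for each $j$, so the summed term $\sum_{j=1}^N\xi_j\ptl_j\wh W \le 0$ a.e., hence the third integral is nonpositive. This is the crux: the signs of the three pieces must be arranged so that the identity forces each piece to vanish. Concretely, one combines the $j$-identities with coefficients (here all equal to $1$, the simplest admissible choice in the family underlying Theorem~\ref{teo1} with $c=0$, $\sigma_j=-1$) so that the $\wh W$ coefficient $b$ stays positive while the surviving derivative terms pick up the favorable sign. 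From the resulting equality of a sum of terms each of one sign, we conclude $\intR\abs{\grad v}^2=0$, whence $v$ is constant, and then $\intR\wh W\abs{\widehat{1-\abs v^2}}^2=0$; combined with $\abs v\to 1$ at infinity this gives $\abs v\equiv 1$. The main obstacle I anticipate is purely technical: justifying the Pohozaev identities rigorously — in particular controlling the decay of $v$ and $\grad v$ well enough to discard boundary terms, and verifying that the convolution terms can legitimately be transferred to the Fourier side with the derivative falling on $\wh W$ (this is where \ref{H-resto} and the integrability built into \ref{H-infty} do the work). Once the identities are in hand, the sign argument is immediate.
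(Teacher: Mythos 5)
Your overall strategy---sum the Pohozaev identities of Proposition~\ref{pohozaev} over $j$ and exploit the sign hypotheses $\wh W\geq 0$ and $\xi_j\ptl_j\wh W\leq 0$---is essentially the paper's. With $c=0$ the paper writes each Pohozaev identity as $\K+\U=2\K_j+\RR_j$, where $\K_j=\frac12\intR|\ptl_j v|^2$, $\K=\sum_j\K_j$, $\U=\frac14\intR(W*\eta)\eta$ and $\RR_j=\frac1{4(2\pi)^N}\intR\xi_j\ptl_j\wh W\,|\wh\eta|^2$; summing over $j=1,\dots,N$ yields $(N-2)\K+N\U=\sum_j\RR_j\leq 0$, and nonnegativity of $\K$ and $\U$ forces $\U=0$ (and $\K=0$ when $N\geq 3$).

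Two points in your write-up need fixing. First, the auxiliary ``test against $v$'' identity is not actually available here: the integrand $|v|^2(W*\eta)$ is not in $L^1(\R^N)$, since $|v|\to 1$ at infinity while $W*\eta$ is only in $L^2(\R^N)$ under~\ref{H-infty}. The paper never uses such an identity, and you should drop it. Second, and more substantively, you claim the combined relation carries a strictly positive coefficient $a>0$ in front of $\intR|\grad v|^2$ and deduce $\grad v=0$ directly. The coefficient produced by summing the Pohozaev identities is $(N-2)/2$, which vanishes when $N=2$; in that case the relation only yields $\U=0$ (and $\RR_j=0$ for each $j$), \emph{not} $\K=0$, so the route ``$\int|\grad v|^2=0$, hence $v$ constant'' fails in dimension two. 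To close that case you should argue instead from $\U=0$: since $\wh W\geq 0$, $\U=\frac1{4(2\pi)^N}\intR\wh W\,|\wh\eta|^2=0$ gives $\wh W\,\wh\eta=0$ a.e., hence $W*\eta=0$ in $L^2(\R^N)$; then the equation with $c=0$ reads $\Delta v=0$, and $v$ bounded harmonic on $\R^N$ must be constant, after which $\eta=1-|v|^2\in L^2(\R^N)$ forces $|v|\equiv 1$. For $N\geq 3$ your deduction of $\intR|\grad v|^2=0$ does follow from the summed Pohozaev inequality and matches the paper.
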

Note that in the case $W=a\delta$, $a>0$, $\wh W=a$ and so that $\grad \wh W=0$. Then
conditions \eqref{hyp-cor}, \eqref{ine:cor} and \eqref{der-c-0} hold. Therefore, invoking Corollary~\ref{cor-teo1} or
\ref{cor-teo} and Theorem~\ref{teo-c-0}  we obtain the nonexistence of nontrivial solutions for all 
\bq\label{caso-delta2}
c\in \{0\}\cup (\sqrt{2a},\infty).
\eq
In particular, considering $a=1$, we recover Theorem \ref{teo:brezis}
in the cases \ref{caso-c-0} and \ref{teo:gravejat}.

So far, in view of \ref{H-reg}, we have assumed that $\wh W$ is regular in a neighborhood
of the origin,
which in particular allows us to define $c_s(W)$. 
However there are interesting examples of kernels provided by the physical literature 
 such that $\wh W$ is not continuous at the origin  and then $c_s(W)$ is not properly
defined. For this reason we will work with
a more general geometric condition on $\wh W$. 
More precisely, 
 denoting by $\{e_k\}_{k\in \{1,\dots ,N\}}$  the canonical unitary vectors of $\R^N$, we introduce the function  
 \bq\label{def-w-chico}
\w_j(\nu_1,\nu_2):= \wh W(\nu_1 e_1+\nu_2 e_j),\quad (\nu_1,\nu_2)\in \R^2, \ j\in \{2,\dots,N\},
\eq
and the set
\bqq
\Gamma_{j,c}:=\{ \nu=(\nu_1, \nu_2) \in \R^2 : \abs{\nu}^4+2\w_j(\nu)\abs{\nu}^2-c^2\nu_1^2=0  \}.
\eqq
Then Theorem \ref{teo1} can be generalized if we replace  \ref{H-reg} by the condition
 \begin{enumerate}[resume*]
\item\label{H-gamma} 
For all $j\in\{2,\dots, N\}$ and $c>0$, there exist $\delta>0$ and  two functions $\gamma^+_{j,c}$ and $\gamma^-_{j,c}$, 
defined on the interval $(0,\delta)$, such that
the set $\Gamma_{j,c}\cap B(0,\delta)$ has Lebesgue measure zero, 
 $\gamma_{j,c}^\pm\in C^1((0,\delta))$, and
$$\gamma^+_{j,c}(t)>0, \ \ \gamma^-_{j,c}(t)<0,\ \ (t,\gamma_{j,c}^\pm(t))\in \Gamma_{j,c}, \quad \text{for all } t\in (0,\delta).$$
Moreover, the following limits exist and are equal 
\bqq
\lim_{t\to 0^+}\left(\frac{\gamma_{j,c}^+(t)}{t}\right)^2= \lim_{t\to 0^+}\left(\frac{\gamma_{j,c}^-(t)}{t}\right)^2=:{\ell_{j,c}}.
\eqq
\end{enumerate}
\begin{figure}[ht]
\begin{center}
\scalebox{0.9}{\includegraphics{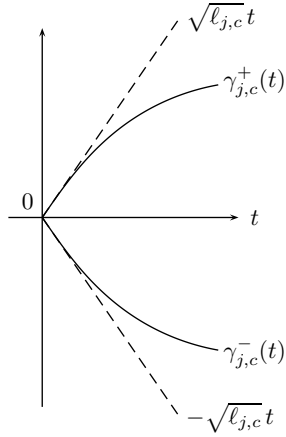}}
\end{center}
\caption{The curves $\gamma^{\pm}_{j,c}$ of condition \ref{H-gamma}.}
\label{figura}
\end{figure}
Figure~\ref{figura} illustrates  condition \ref{H-gamma}. The fact that  \ref{H-reg} and \eqref{lim-W}  actually imply \ref{H-gamma}
  is proved in Section~\ref{sec-morse}
(see Lemma~\ref{curva}). We also note that from \ref{H-gamma} we infer that   $\lim_{t\to 0^+}\gamma^\pm_{j,c}(t)=0$. Moreover, 
if $\wh W$ is even in each component, that is 
\bqq
\wh W((-1)^{m_1}x_1,(-1)^{m_2}x_2,\dots,
(-1)^{m_N} x_N)=\wh W(x_1,x_2,\dots,x_N),
\eqq
for all $(m_1,\dots,m_N)\in \{0,1\}^N,$
then $\gamma_{j,c}^-=-\gamma_{j,c}^+$, for all $j\in \{2,\dots,N\}$.

On the other hand, if the values $\ell_{j,c}$ are positive,
a necessary condition for the existence of a nontrivial finite energy solution of \eqref{TW}
is that they  are equal.
\begin{lema}\label{iguales}
Let $c>0$. Assume that $W$ satisfies  \ref{H0}--\ref{W-positivo} and \ref{H-gamma} with \mbox{$\ell_{j,c}>0$,} for all $j\in \{2,\dots, N\}$.
Let $v\in \E(\R^N)$ be a nontrivial solution of \eqref{TW} in $\E(\R^N)$. 
Then 
\bqq
\ell_{1,c}=\ell_{2,c}=\dots=\ell_{N,c}.
\eqq
\end{lema}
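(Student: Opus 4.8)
The plan is to convert $\eqref{TW}$ into a single scalar equation for $\eta:=1-|v|^{2}$ whose Fourier symbol is exactly $P(\xi):=|\xi|^{4}+2\widehat W(\xi)|\xi|^{2}-c^{2}\xi_{1}^{2}$ — the function that, restricted to the coordinate plane $\Pi_{j}=\mathrm{span}(e_{1},e_{j})$, defines $\Gamma_{j,c}$ — and then to read off a rigidity condition on the tangent directions of the curves $\gamma^{\pm}_{j,c}$ at the origin from the fact that $\eta\in L^{2}(\R^{N})$ and $E(v)<\infty$. I would first collect what Section~\ref{section-regularity} provides: $v$ is smooth and bounded, $|v|\to1$ and $\nabla v\to0$ at infinity, so $\eta,\nabla\eta,\nabla v\in L^{2}(\R^{N})$, the current $j:=\Im(\bar v\,\nabla v)$ lies in $L^{2}(\R^{N})$, and the quadratic quantities $|\nabla v|^{2}$, $\eta\,(W*\eta)$ (using \ref{H-infty}, which gives $W*\eta\in L^{2}$), and $\eta\,\nabla\theta$ (with $\theta=\arg v$, well defined away from the compact nodal set of $v$, the latter contributing only a smooth, compactly supported error) all belong to $L^{1}(\R^{N})$.

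Taking imaginary and real parts of $\bar v\cdot\eqref{TW}$ yields the hydrodynamic pair $\div j=\tfrac{c}{2}\partial_{1}\eta$ and $-\tfrac12\Delta\eta+W*\eta=c\,j_{1}+|\nabla v|^{2}+\eta\,(W*\eta)$. I would then eliminate $j$ by its Helmholtz decomposition: the gradient part is carried by $\eta$ via $\Delta\phi=\tfrac{c}{2}\partial_{1}\eta$, while the divergence–free part coincides with $\mathbb P(-\eta\nabla\theta)$, a quadratic quantity. Substituting $j_{1}$ into the second equation and applying $-2\Delta$ gives, in Fourier variables,
$$\big(|\xi|^{4}+2\widehat W(\xi)|\xi|^{2}-c^{2}\xi_{1}^{2}\big)\,\widehat\eta(\xi)=\widehat G(\xi)=|\xi|^{2}h(\xi)+2c\,\xi_{1}\big(\xi\cdot g(\xi)\big),$$
where $h$ is a combination of Fourier transforms of $L^{1}$ functions, hence continuous near $0$, with $h(0)=2b$ for $b:=\int|\nabla v|^{2}+\int\eta\,(W*\eta)-c\,m_{1}$, and $g(\xi)=\widehat{\eta\nabla\theta}(\xi)$ is continuous with $g(0)=m:=\int\eta\nabla\theta\in\R^{N}$.

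Fix now $j\in\{2,\dots,N\}$ and restrict this identity to $\Pi_{j}$: since $\widehat G$ is continuous and $\eta\in L^{2}$, one gets $P|_{\Pi_{j}}\cdot(\widehat\eta|_{\Pi_{j}})=\widehat G|_{\Pi_{j}}$ with $P|_{\Pi_{j}}$ the defining function of $\Gamma_{j,c}$. By \ref{H-gamma}, near the origin $\Gamma_{j,c}$ reduces to the two $C^{1}$ curves $\gamma^{\pm}_{j,c}$, along which $P|_{\Pi_{j}}$ vanishes to first order, so that $\widehat G/P\notin L^{2}_{\loc}$ near them unless $\widehat G\equiv0$ on $\gamma^{\pm}_{j,c}$. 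Imposing $\widehat G\big(t e_{1}+\gamma^{\pm}_{j,c}(t)e_{j}\big)=0$, dividing by $t^{2}$ and letting $t\to0^{+}$ — using $\gamma^{\pm}_{j,c}(t)\to0$ and $\gamma^{\pm}_{j,c}(t)/t\to\pm\sqrt{\ell_{j,c}}$ (a consequence of \ref{H-gamma} and $\ell_{j,c}>0$), the continuity of $h,g$ at $0$, and $h(0)=2b$, $g(0)=m$ — gives the two relations $(1+\ell_{j,c})\,b+c\big(m_{1}\pm\sqrt{\ell_{j,c}}\,m_{j}\big)=0$. Subtracting forces $m_{j}=0$ (since $c>0$ and $\ell_{j,c}>0$), and adding forces $(1+\ell_{j,c})\,b=-c\,m_{1}$; since $b+c\,m_{1}=\int|\nabla v|^{2}+\int\eta\,(W*\eta)>0$ (the second integral is $\geq0$ by \ref{W-positivo} and the Plancherel identity, and the first is $>0$ as $v$ is nonconstant), we have $b\neq0$, whence $\ell_{j,c}=-(b+c\,m_{1})/b$, a value independent of $j$. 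The longitudinal slope $\ell_{1,c}$ is fixed to the same value by the analogous computation, which closes the chain $\ell_{1,c}=\ell_{2,c}=\dots=\ell_{N,c}$.

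The step I expect to be delicate is the slicing. The scalar identity $P\widehat\eta=\widehat G$ holds only almost everywhere in $\R^{N}$, so passing to the particular (measure–zero) plane $\Pi_{j}$ is not automatic: one must either extract enough Sobolev regularity of $\widehat\eta$ from extra decay of $\eta$ to justify restricting it to $\Pi_{j}$, or replace the naive restriction by a limiting/mollification argument, and in either case it is the continuity of $\widehat G$ near $0$ together with the measure–zero and $C^{1}$ statements built into \ref{H-gamma} that make the conclusion $\widehat G|_{\gamma^{\pm}_{j,c}}=0$ legitimate. The preliminary reduction — justifying the integrations by parts, the Helmholtz/Leray step in the presence of the nodal set, and the stated form of $\widehat G$ — is routine but relies quantitatively on the decay estimates of Section~\ref{section-regularity}.
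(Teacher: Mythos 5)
Your proposal follows the paper's route exactly: derive the biharmonic Fourier identity $R(\xi)\widehat\eta(\xi)=H(\xi)$ with $R(\xi)=\abs\xi^4+2\wh W(\xi)\abs\xi^2-c^2\xi_1^2$ and $H$ the transform of a combination of $L^1$ quadratic quantities; deduce that $H$ vanishes along $\gamma_{j,c}^\pm$; Taylor-expand, divide by $t^2$, send $t\to 0^+$, subtract the two signed relations to kill the transverse term and add them to get $\ell_{j,c}(J(v)+cP(v))=-J(v)$, which is manifestly $j$-independent since $J(v)>0$ for nonconstant $v$. This is precisely Proposition~\ref{prop-fourier} together with the proof of Lemma~\ref{iguales}, and your bookkeeping ($b$, $m$) matches the paper's ($J(v)$, $P(v)$) after unraveling signs.

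The one point worth flagging is the ``slicing'' step that you correctly single out as delicate, but resolve differently than the paper does. You gesture at either extra Sobolev regularity of $\widehat\eta$ or a mollification argument to justify evaluating the a.e.\ identity on the measure-zero plane $\Pi_j$. The paper instead never restricts $\widehat\eta$ at all: it argues by contradiction in Lemma~\ref{H-cero}, observing that $F,G\in L^1(\R^N)$ makes $H$ continuous, so if $H(\tilde\xi)\neq 0$ at some curve point then $\abs{H}\geq A>0$ on a neighbourhood, whence $\abs{\widehat\eta}\geq A/\abs{R}$ a.e.\ there; since the Morse-like degeneracy of $R$ along the $C^1$ curve $\gamma_{j,c}^\pm$ makes $1/R^2$ non-integrable, this contradicts $\widehat\eta\in L^2(\R^N)$. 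This avoids any restriction of an $L^2$ function to a null set and is the cleaner way to close the gap you identified; it relies only on the continuity of $H$ (which you also noted) and on the $C^1$ structure of $\Gamma_{j,c}$ provided by \ref{H-gamma}. Also note that your Helmholtz/Leray decomposition of the current is an unnecessary detour --- the paper goes straight from the two scalar relations \eqref{dem-eta1}, \eqref{dem-eta2} to the biharmonic equation \eqref{bi} without invoking a projection --- and that $\ell_{1,c}$ is not produced by an ``analogous computation'' (there are no curves for $j=1$ in \ref{H-gamma}); in the paper it is defined only a posteriori to denote the common value.
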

Now we are ready to state our main result in its general form.
\begin{teo}\label{teorema}
Let $c>0$. Assume that  $W$ satisfies  \ref{H0}--\ref{W-positivo} and \ref{H-gamma},
with 
  \bq\label{def-l-c}
\ell_c:=\ell_{1,c}=\ell_{2,c}=\dots=\ell_{N,c}>0.
\eq
 Suppose  that there exist constants  $\sigma_1,\dots,\sigma_N\in \R$  such that
\bq
\label{sigma-1} \wh W(\xi)+\ell_c \sum_{k=2}^N\sigma_k \xi_k\ptl_k \wh W(\xi)-\sigma_1 \xi_1\ptl_1\wh W(\xi)\geq  0, \text{ for a.a. } \xi \in \R^N, \eq
and
\bq
\label{sigma-2} \sum_{k=2}^N \sigma_k +\min\left\{-\sigma_1-1,\frac{\sigma_1-1}{\ell_c+2},2\ell_c \sigma_j+\sigma_1-1\right\}\geq 0,  
\eq
 for all  $j\in \{2,\dots, N\}$. Then nontrivial solutions of \eqref{TW} in $\E(\R^N)$ do not exist.  
\end{teo}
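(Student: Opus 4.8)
The plan is to derive, for a would‑be nontrivial solution $v\in\E(\R^N)$ of \eqref{TW}, a family of Pohozaev‑type identities together with one further ``Nehari–type'' relation, and then to combine them linearly so that \eqref{sigma-1} and \eqref{sigma-2} force every energy term to vanish. First I would invoke the results of Section~\ref{section-regularity} (which only need \ref{H-infty}): $v$ is smooth, $|v|\to1$ and $\grad v\to0$ at infinity, and $v$, $\grad v$ and $\eta:=1-|v|^2$ decay fast enough to legitimate all the integrations by parts below. Throughout I would write $K_k:=\intR|\ptl_k v|^2$ ($k=1,\dots,N$) for the partial kinetic energies, $p:=\intR\Im(\bar v\,\ptl_1 v)$ for the momentum, $\mathcal V:=(2\pi)^{-N}\intR\wh W\,|\wh\eta|^2\ge0$ (nonnegative by \ref{W-positivo}) and $\mathcal V_k:=(2\pi)^{-N}\intR\xi_k\ptl_k\wh W\,|\wh\eta|^2$, the last being finite because of the boundedness assumed in \ref{H-resto}. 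Testing \eqref{TW} against $\overline{x_k\,\ptl_k v}$, taking real parts and integrating by parts, the Laplacian contributes $K_k-\tfrac12\sum_m K_m$, the transport term $ic\ptl_1 v$ contributes $0$ if $k=1$ and $\tfrac12 cp$ if $k\ge2$ (using $\Im\intR x_k\,\ptl_1 v\,\overline{\ptl_k v}=-\tfrac12 p$), and the nonlocal term, rewritten on the Fourier side and integrated by parts once in $\xi$, contributes $\tfrac14(\mathcal V-\mathcal V_k)$; this yields
\begin{equation*}
K_1-\sum_{k=2}^N K_k=\tfrac12\big(\mathcal V-\mathcal V_1\big),\qquad cp=2K_j-\sum_{k=1}^N K_k-\tfrac12\big(\mathcal V-\mathcal V_j\big)\quad(2\le j\le N).
\end{equation*}

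These $N$ identities still contain $p$, and the step I expect to be the main obstacle is to produce one more relation eliminating it; this is exactly where \ref{H-gamma} and $\ell_c$ come in. When $\wh W$ is continuous at the origin it is short: testing \eqref{TW} against $\bar v$, using $\intR(W*\eta)\,|v|^2=\wh W(0)\,\wh\eta(0)-\mathcal V$ and the mass–momentum relation $\wh\eta(0)=\intR\eta=2p/c$ — obtained by differentiating once at $\xi=0$ the Fourier transform of the continuity equation $c\,\ptl_1\eta=2\,\div\,\Im(\bar v\,\grad v)$ — and substituting $\wh W(0)=c^2/\big(2(\ell_c+1)\big)$, one gets
\begin{equation*}
cp=-\frac{\ell_c+1}{\ell_c}\Big(\sum_{k=1}^N K_k+\mathcal V\Big).
\end{equation*}
Under the weaker hypothesis \ref{H-gamma}, neither $\wh W(0)$ nor $\intR\eta$ need make sense, so I would instead recover this relation — or, failing a direct equality, the two one‑sided inequalities carried by the two branches $\gamma^{\pm}_{j,c}$ — by a careful Fourier analysis near $\xi=0$ of $\wh\eta$ and of the momentum currents restricted to the curves of \ref{H-gamma}; this is essentially the computation underlying Lemma~\ref{iguales}, now usable as an identity because the $\ell_{j,c}$ are assumed equal in \eqref{def-l-c}. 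The delicate points are that $\wh\eta$ is a priori only in $L^2$, that the resonance set has Lebesgue measure zero, and that $\ell_c$ has to be extracted uniformly from the asymptotic slopes as $t\to0^+$.

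To conclude, set $\Sigma:=\sum_{k=2}^N\sigma_k$ and take $\sigma_1$ times the first identity above, $\ell_c\sigma_j$ times the $j$‑th one ($2\le j\le N$), and $-\ell_c\Sigma$ times the Nehari relation; the $cp$‑terms cancel and one is left with
\begin{equation*}
(\sigma_1-\Sigma)\,K_1-\sum_{j=2}^N\big(\sigma_1+\Sigma+2\ell_c\sigma_j\big)K_j=\tfrac12\Big(\mathcal V+\ell_c\sum_{k=2}^N\sigma_k\mathcal V_k-\sigma_1\mathcal V_1\Big)+\tfrac12\big((\ell_c+2)\Sigma+\sigma_1-1\big)\mathcal V.
\end{equation*}
By \eqref{sigma-1} the first parenthesis on the right is $\ge0$, while $(\ell_c+2)\Sigma+\sigma_1-1\ge0$ is the second entry of the minimum in \eqref{sigma-2} and $\mathcal V\ge0$, so the right-hand side is nonnegative. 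On the left, the first and third entries of that minimum give $\Sigma-\sigma_1-1\ge0$ and $\Sigma+2\ell_c\sigma_j+\sigma_1-1\ge0$, hence $\sigma_1-\Sigma\le-1$ and $\sigma_1+\Sigma+2\ell_c\sigma_j\ge1$, so the left-hand side is $\le-\sum_{k=1}^N K_k\le0$. Therefore $\sum_{k=1}^N K_k=0$, i.e.\ $\grad v\equiv0$, so $v$ is constant; since $\eta$ is then a constant lying in $L^2(\R^N)$ it must vanish, so $|v|\equiv1$ and $v$ is a trivial solution — the desired contradiction. Finally, Theorem~\ref{teo1} would follow from Lemma~\ref{curva}, which shows that \ref{H-reg} and $c>c_s(W)$ imply \ref{H-gamma} with $\ell_{j,c}=\alpha_c$ for all $j$, and its corollaries from exhibiting explicit $\sigma_k$ satisfying \eqref{sigma-1}--\eqref{sigma-2}.
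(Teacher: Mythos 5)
Your proposal is correct and follows essentially the same route as the paper: you re-derive the Pohozaev-type identities of Proposition~\ref{pohozaev} and the Fourier identity of Proposition~\ref{prop-fourier} (correctly identifying the rigorous justification of the $\xi$-integration by parts and of the ``Nehari'' relation as the real technical obstacles), and then combine them using exactly the vector $\sigma=(\sigma_1,\dots,\sigma_N,-1)$ that the paper feeds into Farkas' lemma. The only cosmetic difference is that you carry out the linear combination and the resulting sign chase explicitly, whereas the paper packages the same computation as a Farkas-type incompatibility for the system $Az=b$ with $z=(\K_1,\dots,\K_N,\U)\ge 0$; the inequalities extracted from \eqref{sigma-1}--\eqref{sigma-2} and the conclusion $\intR|\grad v|^2=0$ are identical.
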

Finally, we give the corresponding analogue of Corollaries \ref{cor-teo1}--\ref{cor-teo}.
\begin{cor}\label{ultimo-cor}
 Let $c>0$. Assume that  $W$ satisfies  \ref{H0}--\ref{W-positivo}, \ref{H-gamma} and \eqref{def-l-c}.
Suppose that either \eqref{hyp-cor}  or  
$$l_c\leq \inf_{\xi\in \R^N} \frac{(N-1)\wh W(\xi)}{\sum_{k=2}^N \abs{\xi_k\ptl_k \wh W(\xi)}}$$
hold. Then nontrivial solutions of \eqref{TW} in $\E(\R^N)$ do not exist. 
\end{cor}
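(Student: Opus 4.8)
The plan is to deduce Corollary~\ref{ultimo-cor} directly from Theorem~\ref{teorema}. Since $W$ is already assumed to satisfy \ref{H0}--\ref{W-positivo}, \ref{H-gamma} and \eqref{def-l-c}, all that is missing in order to invoke Theorem~\ref{teorema} is the existence of real constants $\sigma_1,\dots,\sigma_N$ for which \eqref{sigma-1} and \eqref{sigma-2} hold. So in each of the two cases of the hypothesis it suffices to exhibit one such choice and to check the two inequalities; the rest of the argument is the content of the earlier sections.

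Suppose first that \eqref{hyp-cor} holds. I would take
\[
\sigma_1:=-1,\qquad \sigma_2=\cdots=\sigma_N:=\frac{1}{\ell_c}\max\Big\{1,\frac{2}{N-1}\Big\}.
\]
Because the $\sigma_k$ with $k\geq 2$ are all equal, bounding $\ell_c\sigma_2\sum_{k=2}^N\xi_k\ptl_k\wh W(\xi)$ and $-\sigma_1\xi_1\ptl_1\wh W(\xi)=\xi_1\ptl_1\wh W(\xi)$ from below by minus their absolute values shows that the left-hand side of \eqref{sigma-1} is at least $\wh W(\xi)-\max\{1,2/(N-1)\}\sum_{k=2}^N\abs{\xi_k\ptl_k\wh W(\xi)}-\abs{\xi_1\ptl_1\wh W(\xi)}$, which is $\geq 0$ by \eqref{hyp-cor}. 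For \eqref{sigma-2} note that the first entry of the minimum is $-\sigma_1-1=0$, the third entry is $2\ell_c\sigma_j+\sigma_1-1=2\max\{1,2/(N-1)\}-2\geq 0$, so the minimum equals the second entry $(\sigma_1-1)/(\ell_c+2)=-2/(\ell_c+2)$; since $\sum_{k=2}^N\sigma_k=\frac{1}{\ell_c}\max\{N-1,2\}\geq 2/\ell_c>2/(\ell_c+2)$, inequality \eqref{sigma-2} holds for every $j\in\{2,\dots,N\}$.

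Suppose instead that the second alternative holds, which I read as $\ell_c\sum_{k=2}^N\abs{\xi_k\ptl_k\wh W(\xi)}\leq (N-1)\wh W(\xi)$ for a.a.\ $\xi\in\R^N$ (written this way no division is needed and the points where $\sum_{k=2}^N\abs{\xi_k\ptl_k\wh W}$ vanishes cause no trouble). Here I would take $\sigma_1:=0$ and $\sigma_2=\cdots=\sigma_N:=\frac{1}{N-1}$, so that $\sum_{k=2}^N\sigma_k=1$. Then \eqref{sigma-1} reduces to $\wh W(\xi)+\frac{\ell_c}{N-1}\sum_{k=2}^N\xi_k\ptl_k\wh W(\xi)\geq \wh W(\xi)-\frac{\ell_c}{N-1}\sum_{k=2}^N\abs{\xi_k\ptl_k\wh W(\xi)}\geq 0$ by the assumed bound, while in \eqref{sigma-2} the minimum equals its first entry $-\sigma_1-1=-1$ (the other two entries being $-1/(\ell_c+2)$ and $2\ell_c/(N-1)-1$, both $\geq -1$), so that $\sum_{k=2}^N\sigma_k+\min\{\cdots\}=1-1=0\geq 0$. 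In either case Theorem~\ref{teorema} applies and yields the nonexistence of nontrivial solutions of \eqref{TW} in $\E(\R^N)$.

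There is no genuine obstacle in this argument: it is a matter of choosing the parameters and carrying out an elementary check, the real work having been done in Theorem~\ref{teorema} and the results leading to it. The only two points that deserve a moment's attention are verifying that, for each choice, the relevant entry of the minimum in \eqref{sigma-2} is indeed the one claimed (so that the resulting threshold is $2/(\ell_c+2)$ in the first case and $1$ in the second), and interpreting the infimum hypothesis of the second alternative as the pointwise inequality above in order to avoid any division-by-zero issue.
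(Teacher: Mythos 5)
Your proof is correct and follows essentially the same route the paper intends: the paper's proof of Corollary~\ref{ultimo-cor} simply says to argue as in Corollaries~\ref{cor-teo1} and~\ref{cor-teo} but invoke Theorem~\ref{teorema} in place of Theorem~\ref{teo1}, i.e.\ exhibit admissible $\sigma_1,\dots,\sigma_N$ satisfying \eqref{sigma-1}--\eqref{sigma-2}. In the second case your choice $\sigma_1=0$, $\sigma_k=1/(N-1)$ matches the paper's Corollary~\ref{cor-teo} exactly; in the first case your choice $\sigma_1=-1$, $\sigma_k=\frac{1}{\ell_c}\max\{1,2/(N-1)\}$ differs from (and is arguably cleaner than) the $\bar\sigma=\max\{2/((N-1)(\alpha_c+2)),\,2/(N-1+\alpha_c)\}$ used in the paper's Corollary~\ref{cor-teo1}, but both verifications go through.
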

\subsection{Examples}\label{examples}
In this subsection we provide some potentials of physical interest for which 
the Cauchy problem for
\eqref{NGP}  is globally well-posed (see  \cite{delaire}). 

(I) Given the spherically symmetric
interaction of particles, in physical models it is usual to suppose that $W$ is radial
and then so is its Fourier transform, namely
$$\widehat W(\xi)=\rho(\abs{\xi}),$$
for some function $\rho:[0,\infty)\to \R$. Assuming that  $\rho$ is differentiable, we compute
\bq\label{laderivada}
\xi_k \ptl_k \wh W(\xi)=\rho'(\abs{\xi})\frac{\xi_k^2}{\abs{\xi}}, \quad \text{ for all }\xi\in \R^N\setminus\{0\}.
 \eq
Then, using that $\sum_{k=2}^N \xi_k^2=\abs{\xi}^2-\xi_1^2$ and that $\abs{\xi_k}\leq \abs{\xi}$, we obtain that 
 conditions \eqref{hyp-cor} and \eqref{ine:cor} are respectively satisfied if
\bq
 \max\left\{1,\frac{2}{N-1}\right\} \leq \inf_{r>0}  \frac{\rho(r)}{\abs{\rho'(r)}r},\label{ex:1}\\
\eq
and
\bq
 2\rho(0)< c^2\leq 2\rho(0)     \left(   1+   \inf_{r>0} \frac{\rho(r)}{\abs{\rho'(r)}r} \right).\label{ex:2}
\eq
We consider now a generalization of the 
model proposed by Shchesnovich and Kraenkel \cite{kraenkel}
\bqq
\rho(r)=\frac{1}{(1+a r^2)^{b/2}}, \quad a,b>0,
\eqq
so that 
$$c_s:= c_s(W)=\sqrt2.$$
It is immediate to verify that hypotheses \ref{H0},\ref{H-resto}--\ref{H-reg} are satisfied. 
Also, since $\wh W\in L^\infty(\R^N)$, \ref{H-infty} is fulfilled for $N=2,3$. Moreover, 
by Proposition 6.1.5 in \cite{grafakos}, we conclude that 
$W\in L^1(\R^N)\cap L^N(\R^N)$ for $N\geq 4$ provided that 
$b>N-1$. On the other hand,
\bq\label{ex:3}
\inf_{r>0}  \frac{\rho(r)}{\abs{\rho'(r)}r}=\inf_{r>0}\frac{1+ar^2}{abr^2}=\frac{1}{b}.
\eq
Therefore, using \eqref{laderivada}--\eqref{ex:3} and invoking  Corollaries \ref{cor-teo1},\ref{cor-teo} and Theorem \ref{teo-c-0},
we conclude that in the following cases there is nonexistence of nontrivial solutions of \eqref{TW} in $\E(\R^N)$
\begin{enumerate}[label=(\alph*),ref=(\alph*)]
\item\label{ex-a} $N=2$, $b\leq 1/2$, $c\in (c_s,\infty)$.
\item $N=2$,  $b> 1/2$, $c\in(c_s,\sqrt{2+2/b})$.
\item\label{ex-b} $N=3$, $b\leq 1$, $c\in (c_s,\infty)$.
\item $N=3$,  $b> 1$, $c\in (c_s,\sqrt{2+2/b})$.
\item $N\geq 4$,  $b> N-1$, $c\in (c_s,\sqrt{2+2/b})$.
\item $N=2$ or 3,  $c=0$.
\item $N\geq 4$,  $b> N-1$, $c=0$.
\end{enumerate}
We remark that if $b\to 0$, $\wh W\to 1$ and then $W\to \delta$
in a distributional sense. Thus the cases \ref{ex-a} and \ref{ex-b} could be seen  as a generalization
of Theorem~\ref{teo:brezis} in the cases \ref{caso-c-0} and \ref{teo:gravejat}.

(II) Let $N=2,3$  and 
\bqq
W_\ve=\delta +\ve f,\quad \ve\geq 0,\eqq
where $f$ is an even real-valued function, 
such that $f, \abs{x}^2f, \abs{x}\grad{f} \in L^1(\R^N)$.
 Then $\wh W_\ve=1+\ve \wh f\in C^2(\R^N)$. Since  
\bq\label{identidad-fourier}
\wh{x_j \ptl_k f}=-(\delta_{j,k}\wh f+\xi_k\ptl_j \wh f),
\eq
we have
\bqq
\norm{\wh f}_{L^\infty(\R^N)}\leq \norm{f}_{L^1(\R^N)},\quad \norm{\xi_k\ptl_j \wh f}_{L^\infty(\R^N)}\leq \norm{f}_{L^1(\R^N)}+ 
\norm{x_j\ptl_k{f}}_{L^1(\R^N)}.
\eqq
Then we see that $W$ satisfies conditions 
\ref{H0}--\ref{H-reg} provided that $\ve <\norm{f}_{L^1(\R^N)}^{-1}$ and that the sonic speed given by
$$c_s:= c_s(W)=\left(2+2\ve\intR f\right)^{1/2},$$
is well-defined.
Moreover  \eqref{hyp-cor} is fulfilled if
\bq\label{condition-ve}
\ve<\left(4\norm{f}_{L^1(\R^N)}+\sum_{k=1}^N\norm{x_k \ptl_k f}_{L^1(\R^N)}  \right)^{-1}.
\eq
Therefore, under condition \eqref{condition-ve}, Corollary \ref{cor-teo1} implies the 
nonexistence of nontrivial solutions of \eqref{TW} in $\E(\R^N)$ for any $c\in (c_s,\infty).$

(III) The following potential used in \cite{remi,yi} to  model dipolar forces in a quantum
gas 
yields an example in $\R^3$ where the speed of sound is not properly defined. Let 
\bqq
W=a\delta+b K,\qquad a,b\in \R, 
\eqq
 where $K$ is the singular kernel
\bqq K(x)=\dfrac{x_1^2+x_2^2-2x_3^2}{\abs{x}^5}, \quad x\in\R^3\backslash\{0\}.\eqq
In the sequel, we will deduce from  Lemma~\ref{iguales} and Theorem~\ref{teorema} 
that there is nonexistence
of nontrivial finite energy solutions of \eqref{TW} in $\E(\R^N)$ for all 
\bq\label{condicion-c}
(2\max\{ {a-\tilde b,a}\})^{1/2}<c<\infty,
\eq
with  $\tilde b=(4\pi b)/3$, provided that $a>0$ and either
\bq\label{alpha_}
a\geq \tilde b\geq  0 \qquad\textrm{or}\qquad a> -2\tilde b\geq  0.\eq

We now turn to the proof of condition \eqref{condicion-c}. In fact, since (see \cite{remi})
 $$\wh{W}(\xi)=a+\tilde b\left(\frac{3\xi_3^2}{\abs{\xi}^2}-1\right), \qquad \xi\in\R^3\backslash\{0\},$$
$W$ satisfies \ref{H0}--\ref{W-positivo} if one of the conditions in \eqref{alpha_} holds.
However, $\wh W$ is not continuous at the origin. More precisely, in terms of the function defined in \eqref{def-w-chico}, 
we have that $\w_2$ is constant equal to $a>0$ and
by Lemma~\ref{curva} there exist curves $\gamma_2^{\pm}$ with
$\ell_{2,c}={c^2}/(2a)-1$. On the other hand, $\w_3$ is not continuous at the origin but 
assuming \eqref{condicion-c} we can explicitly 
solve the algebraic 
equation
$$(x^2+y^2)^2+2\w_3(x,y)(x^2+y^2)-c^2x^2=0$$
and deduce that
\bqq
 \gamma^\pm_{3,c}(t)=
\pm\sqrt{-t^2-a-2\tilde b+\sqrt{6\tilde bt^2+ (a+2\tilde b)^2+c^2t^2}},
\eqq
for $\abs{t}<c^2-2(a-\tilde b)$. 
Therefore \ref{H-gamma} holds and  $\ell_{3,c}=-1+(6\tilde b+c^2)/(2(a+2\tilde b))$.
Note that by  \eqref{alpha_}, $\ell_{3,c}$ is a well-defined positive constant.
By Lemma~\ref{iguales},  a necessary condition
so that the equation \eqref{TW} has nontrivial solutions is  
$\ell_{3,c}=\ell_{2,c}$, which leads us to 
\bqq
(c^2-3a)b=0.
\eqq
The case  $b=0$ has already been  analyzed (see \eqref{caso-delta2}).
If  $b\neq 0$, we obtain  $c^2=3a$. Hence
$\ell_c:=\ell_{2,c}=\ell_{3,c}=1/2.$ Then, taking $\sigma_1=0$ and $\sigma_2=\sigma_3=1/2$,
\eqref{sigma-2} is satisfied and the l.h.s. of \eqref{sigma-1} reads
\bqq
a+\tilde b\left( 3\frac{\xi^2_3}{\abs{\xi}^2} \left(1- \frac{\xi^2_2}{2\abs{\xi}^2} \right)-1\right)+
\frac{3\tilde b}{2}
\frac{\xi^2_3}{\abs{\xi}^2}\left(1- \frac{\xi^2_3}{\abs{\xi^2}}  \right), 
\eqq
which is nonnegative by \eqref{alpha_}.
 Therefore, by Theorem~\ref{teorema}, 
there is nonexistence
of nontrivial solutions of \eqref{TW} in $\E(\R^N)$,
provided that \eqref{condicion-c} and \eqref{alpha_} hold.  

As proved in \cite{delaire}, the Cauchy problem is also globally well-posed
for other interactions such as the soft core potential 
\begin{equation*}
W(x)=\begin{cases}
1, & \ \text{ if }\abs{x}<a,\\
0, & \ \text{otherwise},
\end{cases}
\end{equation*}
with $a>0$. However, our results do not apply to this kernel,
since the changes of sign of $\wh W$  will prevent that an inequality such as
\eqref{sigma-1} can be satisfied. Moreover, in this case the energy could be negative
making more difficult the analysis. Nevertheless, $\wh W$ is positive near the origin
and the sonic speed is still well defined, 
so that it is an open question to establish which are the exact implications of
 change of sign of the Fourier transform in the nonexistence results. 

\subsection{Outline of the proofs and organization of the paper }
We recall that Theorem~\ref{teo:brezis}-\ref{caso-c-0} follows 
from a classical Pohozaev identity.
Gravejat in \cite{gravejat} proves Theorem~\ref{teo:brezis}-\ref{teo:gravejat}
by combining the respective Pohozaev identity with an integral equality
obtained from the Fourier analysis of the equation satisfied by $1-\abs{v}^2$.
Our results are derived in the same spirit. In the next section  we prove
that conditions \ref{H0} and \ref{H-infty} imply the regularity of solutions of \eqref{TW}. 
In Section~\ref{fourier} we prove that condition \ref{H-gamma} allows us to generalize
the arguments in \cite{gravejat} so that we can 
derive the integral identity \eqref{id-fourier}. The fact that the set $\Gamma_{j,c}$
is  described by the curves $\gamma_{j,c}^\pm$ is a consequence of the Morse lemma,
as explained in Section~\ref{sec-morse}.

In Section~\ref{sec:pohozaev} we establish a Pohozaev identity for \eqref{TW}
 with a ``remainder term'' depending on the derivatives
of $\wh W$. Although this identity can be formally obtained for rapidly decaying functions,
its proof for  functions in $\E(\R^N)$ is the major technical difficulty of this paper
and relies on Fourier analysis and the fact that $W$ is even. As in \cite{brezis}, 
we then see in Section~\ref{main} that Theorem~\ref{teo-c-0} is 
as straightforward consequence of this relation.

In Section~\ref{main} we also show that we can recast the identities described above 
as a suitable  linear system  of equations for which we can invoke the Farkas lemma 
to obtain the nonexistence conditions given in Theorems~\ref{teorema} and \ref{teo1}.  
The corollaries stated in Subsection~\ref{statement} then follow by choosing the 
values of $\sigma_1,\dots,\sigma_N$ appropriately.

\paragraph{\bf Notations.}
 We adopt the standard notation $C(\cdot,\cdot, \dots )$ to represent a generic constant
that depends only on each of its arguments.
For any $x,y\in \R^N$, $z,w\in \C$, we denote 
the inner products in $\R^N$ and $\C$, respectively, by $x.y=\sum_{i=1}^N x_iy_i$ and
$\langle z,w\rangle=\Re(z\overline w)$.  The Kronecker delta  $\delta_{k,j}$
takes the value one if $k=j$ and zero otherwise.  $\mathcal F(f)$ or $\wh f$ stand for the  Fourier transform of $f$, namely 
$$\mathcal F(f)(\xi)=\wh f(\xi)=\intR f(x)e^{-ix.\xi}\,dx,$$
and $\F^{-1}$ for its inverse. 
 
From now on we fix $c\geq 0$. We denote by $v=v_1+i v_2$ ($v_1$, $v_2$ real-valued) a solution of \eqref{TW} in $\E(\R^N)$. We also set
the real-valued functions
 \bqq
\rho:=\abs{v}=(v_1^2+v_2^2)^{1/2},\quad \eta:=1-\abs{v}^2.
\eqq

\section{Regularity of solutions}\label{section-regularity}
\begin{lema}
\label{regularity} 
 Assume that $W\in \M_{2,2}(\R^N)$. Then  $v\in W^{2,\,4/3}_{\loc}(\R^N)$.
Suppose further that  $2\leq N\leq 3$. Then $v$ is smooth and bounded. Moreover,  $\eta$ and $\grad{v}$ belong to $W^{k,p}(\R^N)$,
for all $k\in\N$, $2\leq p\leq \infty$.
\end{lema}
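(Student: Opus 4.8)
The plan is to regard \eqref{TW} as the linear elliptic equation
$\Delta v=-ic\,\ptl_1 v-v\,(W*\eta)$ for $v$ and to bootstrap, using three ingredients: the a priori information $v\in\E(\R^N)$, i.e. $\grad v,\eta\in L^2(\R^N)$ and $v\in H^1_{\loc}(\R^N)$; the fact that $W\in\M_{2,2}(\R^N)$ means $f\mapsto W*f$ is bounded on $L^2(\R^N)$ and, being a convolution, commutes with derivatives, hence is bounded on every $H^k(\R^N)$; and the translation invariance of \eqref{TW}, which makes all interior estimates uniform in space. For the first assertion (any $N\ge2$): from $\eta\in L^2(\R^N)$ we get $W*\eta\in L^2(\R^N)$, and also $v\in L^4_{\loc}(\R^N)$ because $\abs v^4=(1-\eta)^2$; combining this with the Sobolev embedding $v\in L^{2N/(N-2)}_{\loc}(\R^N)$ (any exponent when $N=2$) coming from $v\in H^1_{\loc}$, Hölder's inequality gives $v\,(W*\eta)\in L^{4/3}_{\loc}(\R^N)$ in every dimension $N\ge2$ — the exponent $4/3$ being precisely the one valid uniformly, since for $N\ge4$ it is the $L^4_{\loc}$ bound on $v$ coming from $\eta$ that is binding. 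As $\ptl_1 v\in L^2_{\loc}\hookrightarrow L^{4/3}_{\loc}$, this yields $\Delta v\in L^{4/3}_{\loc}(\R^N)$, and interior Calderón--Zygmund estimates give $v\in W^{2,4/3}_{\loc}(\R^N)$.

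Assume now $2\le N\le 3$. I would iterate the previous step: from $v\in W^{2,p}_{\loc}$, Sobolev embedding upgrades the local integrability of $v$, hence of $v\,(W*\eta)$, hence of $\Delta v$, producing $v\in W^{2,p'}_{\loc}$ with $p'>p$; after finitely many steps one reaches (using $N\le3$) an exponent $p>N/2$ with $W^{2,p}_{\loc}\hookrightarrow L^\infty_{\loc}$, so $v$ is continuous and locally bounded, and then $v\,(W*\eta)\in L^2_{\loc}$ gives $v\in W^{2,2}_{\loc}$. Differentiating the equation and running the same scheme on $\grad v$ — the extra term $v\,(W*\grad\eta)$, with $\grad\eta=-2\Re(\bar v\,\grad v)$, causing no trouble once $v\in L^\infty_{\loc}$ and $\grad v\in L^2_{\loc}$ — gives $v\in W^{k,p}_{\loc}(\R^N)$ for all $k,p$, hence $v\in C^\infty$. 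The point now is that, since \eqref{TW} is translation invariant, $\norm{W*\eta}_{L^2(\R^N)}\le\norm{W}_{\M_{2,2}}\norm{\eta}_{L^2(\R^N)}$, $\norm{\grad v}_{L^2(\R^N)}<\infty$, and $\norm{v}_{L^2(B(x_0,2))}^2=\abs{B(x_0,2)}-\int_{B(x_0,2)}\eta$ is bounded uniformly in $x_0$, all the interior bounds above are uniform in the center $x_0\in\R^N$; choosing $p>N/2$ in the last step therefore yields $\sup_{x_0}\norm{v}_{L^\infty(B(x_0,1))}<\infty$, i.e. $v\in L^\infty(\R^N)$.

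With $v$ bounded, $\eta=1-\abs v^2\in L^2(\R^N)\cap L^\infty(\R^N)$ and $\grad\eta=-2\Re(\bar v\,\grad v)\in L^2(\R^N)$, so $\eta\in H^1(\R^N)$ and $W*\eta\in H^1(\R^N)$. From $\Delta v=-ic\,\ptl_1 v-v\,(W*\eta)$ with $\ptl_1 v\in L^2$, $v\in L^\infty$, $W*\eta\in L^2$, we get $\Delta v\in L^2(\R^N)$; combined with $\grad v\in L^2(\R^N)$ this forces $D^2 v\in L^2(\R^N)$ (on the Fourier side $\abs{\xi_j\xi_k\wh v}\le\abs\xi^2\abs{\wh v}=\abs{\widehat{\Delta v}}$), i.e. $\grad v\in H^1(\R^N)$, hence $\grad v\in L^4(\R^N)$ and $\abs{\grad v}^2\in L^2(\R^N)$ for $N\le3$. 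Feeding this into the equation for $\eta$ obtained by multiplying \eqref{TW} by $\bar v$ and taking real parts,
\[
\Delta\eta=-2\abs{\grad v}^2-2c\,\Im(\bar v\,\ptl_1 v)+2(1-\eta)(W*\eta),
\]
whose right-hand side now lies in $L^2(\R^N)$, gives $\eta\in H^2(\R^N)\hookrightarrow L^\infty(\R^N)$ and $W*\eta\in H^2(\R^N)\hookrightarrow L^\infty(\R^N)$. Iterating — differentiate the two equations, control the products by the Leibniz rule using $H^2(\R^N)\hookrightarrow L^\infty(\R^N)$ and $H^1(\R^N)\hookrightarrow L^4(\R^N)$ (valid for $N\le3$) and the boundedness of $W*$ on each $H^k$ — one obtains $\grad v,\eta\in H^k(\R^N)$ for every $k$, and $v\in W^{k,\infty}(\R^N)$. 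The stated conclusion $\eta,\grad v\in W^{k,p}(\R^N)$ for all $k\in\N$, $2\le p\le\infty$, then follows from the embeddings $H^m(\R^N)\hookrightarrow W^{k,p}(\R^N)$ available when $N\le3$.

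The delicate part is not the local regularity (a routine Calderón--Zygmund / Sobolev bootstrap), but upgrading it to \emph{global} bounds on $\R^N$, and especially the global boundedness of $v$: since the only assumption on $W$ is $W\in\M_{2,2}(\R^N)$, convolution with $W$ provides no gain of integrability, so every improvement must be squeezed out of $v$ and $\eta$ through the equation and the Sobolev embeddings — which is exactly why the translation invariance of \eqref{TW} (to make interior estimates uniform) and the embedding $H^2(\R^N)\hookrightarrow L^\infty(\R^N)$ enter, and why the restriction $2\le N\le 3$ is imposed here, the case $N\ge4$ requiring instead the stronger mapping hypotheses \eqref{hyp-high}.
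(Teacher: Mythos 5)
Your argument is correct and follows the same overall strategy as the paper: a local elliptic estimate gives $v\in W^{2,4/3}_{\loc}(\R^N)$ for every $N\geq 2$; a translation-uniform bootstrap in dimensions $N\leq 3$ gives $v\in L^\infty(\R^N)$; and a global bootstrap on the equation then yields $\grad v,\eta\in H^k(\R^N)$ for all $k$. There are, however, two genuine differences in execution. For the local step the paper decomposes $v=z_1+z_2+z_3$ (a harmonic part plus two Dirichlet problems forced by $ic\,\ptl_1 v$ and by $v(W*\eta)$) and estimates each piece, whereas you apply interior Calder\'on--Zygmund estimates directly to $\Delta v=-ic\,\ptl_1 v-v(W*\eta)$; these are interchangeable and give the same uniformity in the center of the ball. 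For the global step the paper differentiates \eqref{TW}, rewrites it as $L_\lambda w=f$ with $L_\lambda=-\Delta-ic\,\ptl_1+\lambda$ and $\lambda$ large, and deduces $w=\ptl_j v\in H^2(\R^N)$ by coercivity (Lax--Milgram), bootstrapping on $\grad v$ alone; you instead obtain $\Delta v,\Delta\eta\in L^2(\R^N)$ and pass to $D^2v,D^2\eta\in L^2(\R^N)$ via Fourier multipliers, carrying the $\eta$-equation along in the iteration (the paper postpones that equation to Section~\ref{fourier}). Your Fourier step should be phrased slightly more carefully than ``$\abs{\xi_j\xi_k\wh v}\leq\abs\xi^2\abs{\wh v}$'', since $v\notin L^2(\R^N)$ and $\wh v$ is only a tempered distribution; the correct way to say it is that $\grad v\in L^2(\R^N)$ is a curl-free vector field whose divergence $\Delta v$ lies in $L^2(\R^N)$, so $\wh{\grad v}=\xi\,(\xi\cdot\wh{\grad v})/\abs\xi^2$ a.e.\ and the Riesz multipliers $\xi_j\xi_k/\abs\xi^2$ give $D^2 v\in L^2(\R^N)$. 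With that small repair both routes are sound and of comparable length; the paper's Lax--Milgram route is marginally tidier because it avoids invoking the $\eta$-equation at this stage.
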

\begin{proof}
Let $\bar x\in \R^N$ and $B_r:= B(\bar x, r)$ the ball of center $\bar x$ and radius $r$.
Then
\bq\label{L4}
\norm{v}_{L^4(B_1)}=\norm{\abs{v}^2}_{L^2(B_1)}\leq \norm{\abs{v}^2-1}_{L^2(\R^N)}+\norm{1}_{L^2(B_1)}\leq E(v)+C(N).
\eq
On the other hand, we can decompose $v$ as $v=z_1+z_2+z_3$, where $z_1,z_2$ and $z_3$ are  the solutions of the following equations
\begin{ecu}\label{descom1}
 -\Delta z_1&=0, &&\textrm{ in }  B_1,\\
z_1&=v, &&\textrm{ on } \ptl B_1,
\end{ecu}\label{descom2}
\begin{ecu}\label{tilde-q}
 -\Delta z_2&=i c  \ptl_1 v  , &&\textrm{ in }  B_1,\\
z_2&=0, &&\textrm{ on } \ptl B_1,
\end{ecu}
\begin{ecu}\label{descom3}
 -\Delta z_3&=v(W*\eta), &&\textrm{ in } B_1,\\
z_3&=0, &&\textrm{ on } \ptl B_1.
\end{ecu} 
Since $z_1$ is a harmonic function, 
$$\norm{z_1}_{C^k(B_{1/2})}\leq C(N,k,E(v)),$$
for all $k\in \N$. Using the H\"older inequality, \eqref{L4} and elliptic regularity estimates, we also have
\bqq
\norm{z_2}_{W^{2,2}(B_1)}\leq C(N,E(v)), \quad \norm{z_3}_{W^{2,4/3}(B_1)}\leq C(N,E(v))\norm{\wh W}_{L^{\infty}(\R^N)}\norm{\eta}_{L^2(\R^N)}.
\eqq
Therefore $\norm{v}_{W^{2,4/3}(B_{1/2})}\leq C(N,E(v),\eta, W)$. Furthermore, by the Sobolev embedding theorem we deduce 
that  $\norm{v}_{L^\infty(B_{1/2})}$ is bounded for $N=2$ and then this bound holds uniformly in $\R^2$. 
If $N=3$, we conclude that  $\norm{v}_{L^{12}(B_{1/2})}$ is uniformly bounded. Then
using the same decomposition \eqref{descom1}--\eqref{descom3} in the ball $B_{1/4}$, identical
arguments prove  that 
$\norm{v}_{W^{2,12/7}(B_{1/4})}\leq C(N,E(v),\eta, W)$, which by the Sobolev embedding theorem in dimension three implies that $\norm{v}_{L^\infty(B_{1/4})}$
is uniformly bounded. Consequently, $v\in L^\infty(\R^N)$ for $N=2,3$. 

Finally, using again  \eqref{descom1}--\eqref{descom3} and a
standard bootstrap argument, we conclude that $v\in W^{k,\infty}(\R^N)$ for all $k\in \N$.

Now, setting $w=\ptl_jv$, $j\in \{1,\dots, N\}$, and differentiating   \eqref{TW} with respect to $x_j$, we obtain for any $\lambda \in \R$
$$L_{\lambda}(w):=-\Delta w-ic \ptl_1 w+\lambda w=\ptl_jv (W*\eta)+v(W*\ptl_j \eta)+\lambda w,\quad \text{ in }\R^N.$$
Since $\grad v\in L^\infty(\R^N)\cap L^2(\R^N)$, we deduce that the r.h.s. belongs to $L^2(\R^N)$.
Then, for $\lambda>0$ large enough, we can apply the Lax--Milgram theorem to the operator $L_{\lambda}$ to deduce that $w\in H^{2}(\R^N)$.
Thus $\grad v\in H^2(\R^N)$ and a bootstrap argument shows that $\grad v\in H^{k}(\R^N)$, for all $k\in \N$
and therefore, by interpolation, $\grad v,\eta\in W^{k,p}(\R^N)$, for all $p\geq 2$ and $k\in \N$.
\end{proof}

In Lemma \ref{regularity}, we needed to differentiate the equation \eqref{TW} to improve the regularity,
which required that $W*\grad \eta$ was well-defined. If $N\geq 4$, 
proceeding as in Lemma \ref{regularity}, we can only infer that $\grad \eta \in L^{4/3}_{\loc}(\R^N)$
so that it is not clear that we can give a sense to the term $W*\grad \eta$.
On the other hand, if $N\geq 3$, the fact that $\grad v \in L^2(\R^N)$ implies that there exists $z_0\in \C$ with $\abs{z_0}=1$
such that $v-z_0\in L^{\frac{2N}{N-2}}(\R^N)$ (see e.g. \cite[Theorem 4.5.9]{hormander}).
Moreover, since \eqref{TW} is invariant by a change of phase, 
we can assume that
$v-1\in L^{\frac{2N}{N-2}}(\R^N)$. Therefore, 
\bq\label{est-eta}
\grad \eta=-2\langle v-1,\grad v\rangle-2\langle 1,\grad v\rangle \in L^{N/(N-1)}(\R^N)+L^2(\R^N).
\eq
Then it would be reasonable to suppose that $W\in \M_{N/{N-1},q}(\R^N)$, for some $q\geq {N}/{N-1}$.
However, this is not enough to invoke the elliptic regularity estimates
and that is reason why we work with the assumption \eqref{hyp-high} in \ref{H-infty} if $N\geq 4$.
We remark that to establish precise conditions on $W$  that ensure the regularity of solutions
of \eqref{TW} in higher dimensions goes beyond the scope of this paper.

\begin{lema}\label{reg-high}
Let $N\geq 4$. Assume that $W$ satisfies \ref{H-infty}.
 Then $v$ is bounded and smooth.
Moreover,  $\eta$ and $\grad{v}$ belong to $W^{k,p}(\R^N)$, for all $k\in\N$, $2\leq p\leq \infty$.
\end{lema}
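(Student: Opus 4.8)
The plan is to follow the scheme of Lemma~\ref{regularity}, the only genuinely new point in dimension $N\geq 4$ being to extract from \eqref{hyp-high} enough integrability of $W*\eta$ \emph{before} running the elliptic bootstrap. As observed just above the statement, since $\grad v\in L^2(\R^N)$ and $N\geq 3$, after a change of phase we may assume $v-1\in L^{2N/(N-2)}(\R^N)$. Writing
\[
\eta=1-\abs{v}^2=-2\langle v-1,1\rangle-\abs{v-1}^2,
\]
the first term lies in $L^{2N/(N-2)}(\R^N)$ and the second, being the square of an $L^{2N/(N-2)}$ function, lies in $L^{N/(N-2)}(\R^N)$. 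Now $W\in\M_{2N/(N-2),\infty}(\R^N)$ takes care of the first term; for the second I would interpolate, using that by the Riesz--Thorin theorem applied to $f\mapsto W*f$ the inclusions $W\in\M_{N/(N-1),\infty}(\R^N)\cap\M_{2N/(N-2),\infty}(\R^N)$ yield $W\in\M_{p,\infty}(\R^N)$ for every $p$ with $N/(N-1)\leq p\leq 2N/(N-2)$, and the exponent $N/(N-2)$ lies in that range. Hence $W*\eta\in L^\infty(\R^N)$, and since in addition $\eta\in L^2(\R^N)$ and $W\in\M_{2,2}(\R^N)$, also $W*\eta\in L^p(\R^N)$ for all $p\in[2,\infty]$.

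With $W*\eta\in L^\infty(\R^N)$ at hand, I would reproduce the ball decomposition $v=z_1+z_2+z_3$ of \eqref{descom1}--\eqref{descom3}: $z_1$ is harmonic, hence smooth in the interior with bounds depending only on $E(v)$; $z_2$ solves a Poisson equation with datum $ic\,\ptl_1 v\in L^2(B_1)$, so $z_2\in W^{2,2}(B_1)$; and now $v(W*\eta)\in L^4(B_1)\subset L^2(B_1)$ by \eqref{L4} and $W*\eta\in L^\infty$, so $z_3\in W^{2,2}(B_1)$. Thus $v\in W^{2,2}(B_{1/2})$, and iterating the argument in shrinking balls, together with the Sobolev embedding and standard elliptic estimates (with constants uniform in the center of the ball), a bootstrap gives $v\in W^{k,\infty}(\R^N)$ for every $k\in\N$; in particular $v$ is bounded and smooth.

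The remaining assertions then follow exactly as at the end of the proof of Lemma~\ref{regularity}. Since now $v\in L^\infty(\R^N)$, we get $\ptl_j\eta=-2\langle v,\ptl_j v\rangle\in L^2(\R^N)$, whence $W*\ptl_j\eta\in L^2(\R^N)$ because $W\in\M_{2,2}(\R^N)$; differentiating \eqref{TW} with respect to $x_j$, the function $w=\ptl_j v$ satisfies, for $\lambda\in\R$,
\[
-\Delta w-ic\,\ptl_1 w+\lambda w=\ptl_j v\,(W*\eta)+v\,(W*\ptl_j\eta)+\lambda w\quad\text{in }\R^N,
\]
and the right-hand side belongs to $L^2(\R^N)$ (using $\ptl_j v\in L^2$, $W*\eta\in L^\infty$ and $v\in L^\infty$). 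For $\lambda$ large the left-hand operator is coercive, so the Lax--Milgram theorem gives $w\in H^2(\R^N)$; a further bootstrap yields $\grad v\in H^k(\R^N)$ for all $k$, and therefore, by interpolation, $\eta$ and $\grad v$ belong to $W^{k,p}(\R^N)$ for all $k\in\N$ and $2\leq p\leq\infty$.

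The main difficulty is concentrated in the first step: unlike in low dimensions, $\eta$ is a priori controlled only at the $H^1$ level, so the boundedness of $W*\eta$ is not automatic, and it is precisely for this that the extra mapping conditions in \eqref{hyp-high} are imposed; the crux is the interpolation that brings the exponent $N/(N-2)$ of the quadratic part $\abs{v-1}^2$ within reach of those conditions.
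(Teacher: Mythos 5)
Your key observation — that the membership of $N/(N-2)$ in the interval $[N/(N-1),\,2N/(N-2)]$ together with Riesz--Thorin interpolation of \eqref{hyp-high} gives $W\in\M_{N/(N-2),\infty}(\R^N)$, hence $W*\eta\in L^\infty(\R^N)$ — is correct and is indeed one of the points the paper's argument implicitly uses. But your overall route is a genuinely different one (local ball decomposition plus a Lax--Milgram step, rather than the paper's global elliptic estimate for $\tilde v=v-1$ in $W^{k,2N/(N-2)}(\R^N)$), and the crucial steps that you dispatch as ``a bootstrap gives $v\in W^{k,\infty}$'' and ``a further bootstrap yields $\grad v\in H^k$'' are exactly where the dimensional difficulty lies; in dimension $N\geq4$ they are not routine and your write-up leaves a real gap there.

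Concretely: once you have $v\in L^\infty(\R^N)$, differentiating \eqref{TW} once and using $W\in\M_{2,2}$ does give $W*\ptl_j\eta\in L^2$ and hence $\grad v\in H^2(\R^N)$, as you say. But the next step requires control of $W*\ptl_{j,k}\eta$, and $\ptl_{j,k}\eta$ contains the quadratic term $\langle\ptl_j v,\ptl_k v\rangle$, which from $\grad v\in L^2$ alone lies only in $L^1(\R^N)$; $W\in\M_{2,2}$ says nothing about $W*L^1$, so ``the right-hand side belongs to $L^2$'' is no longer automatic. To close the bootstrap one has to bring in the additional mapping properties encoded in \eqref{hyp-high} again — by duality $W\in\M_{1,N}\cap\M_{1,2N/(N+2)}$, and the paper's argument precisely shows $W\in\M_{2,2N/(N-2)}\cap\M_{N/(N-1),2N/(N-2)}$ (its \eqref{M1}) so that $\eta$, $\grad\eta$ and $\ptl_{j,k}\eta$ can be pushed through $W*$ into $L^\infty\cap L^{2N/(N-2)}$, and then the bootstrap runs cleanly in $W^{k,2N/(N-2)}(\R^N)$. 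The same issue afflicts your local ball iteration: the regularity of $z_3$ is limited by the regularity of $W*\eta$, not just by the regularity of $v$, so the claim that shrinking balls and Sobolev yield $v\in W^{k,\infty}(\R^N)$ for every $k$ needs the same control on $W*\grad\eta$, $W*\ptl_{j,k}\eta$, and so on, which you never establish. In short: the interpolation idea is good and gets you $v\in L^\infty$, but beyond that the elliptic bootstrap in $N\geq4$ is not ``exactly as at the end of Lemma~\ref{regularity}'' — it is precisely the point of introducing the extra hypotheses in \eqref{hyp-high}, and your proof should make the repeated appeal to those hypotheses explicit in the iteration rather than deferring to the low-dimensional argument.
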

\begin{proof}
From \eqref{hyp-high}, by duality (see e.g. \cite{grafakos}) we infer that  
 $W\in \M_{1,N}(\R^N)\cap \M_{1,2N/(N+2)}(\R^N)$. Then, from the Riesz--Thorin interpolation theorem
and the fact that 
$(1/2,(N-2)/(2N))$ and $((N-1)/N,(N-2)/(2N))$ belong
to the convex hull of
 $$\left\{\left(\frac{1}{2},\frac12 \right),
\left(\frac{N-1}{N},0\right),\left(\frac{N-2}{2N},0\right),\left(1,\frac1N\right),\left(1,\frac{N+2}{2N}\right)\right\},$$
we conclude that 
\bq\label{M1}
W\in \M_{2,2N/(N-2)}(\R^N)\quad {\rm and}\quad W\in \M_{N/(N-1),2N/(N-2)}(\R^N).
\eq
As mentioned before, we can assume that  $\tilde v:=v-1\in L^{\frac{2N}{N-2}}(\R^N)$. Then using \ref{H-infty},
 \eqref{est-eta} and \eqref{M1}, we are led to 
\bq\label{dem-high} W*\eta,W*\grad \eta \in L^\infty(\R^N)\cap L^{2N/(N-2)}(\R^N).
\eq
Now we recast  \eqref{TW} as
\bq\label{eq-high}
 L_{\lambda}(\tilde v):=-\Delta \tilde v-ic \ptl_1 \tilde v+\lambda \tilde v=\tilde v((W*\eta)+\lambda)+W*\eta,\quad \text{ in }\R^N,
\eq
for some $\lambda>0$. By \eqref{dem-high}, the r.h.s. of \eqref{eq-high} belongs to $L^{2N/(N-2)}(\R^N)$. Then choosing
$\lambda$ large enough, we can apply elliptic regularity estimates to the operator $L_\lambda$ to conclude that $\tilde v\in W^{2,2N/(N-2)}(\R^N)$.
Then 
\bqq
\ptl_{j,k}\eta=-2(\langle v-1,\ptl_{j,k} v\rangle +\langle \ptl_jv,\ptl_kv\rangle + \langle 1,\ptl_{j,k} v\rangle)\in L^{N/(N-1)}(\R^N)+L^{2N/(N-2)}(\R^N),
\eqq
for any $1\leq j,k\leq N$. Therefore, by \eqref{hyp-high} and \eqref{M1}, $W*\ptl_{j,k}\eta \in L^\infty(\R^N)\cap L^{2N/(N-2)}$. Thus
the r.h.s. of \eqref{eq-high} belongs to $W^{2,2N/(N-2)}(\R^N)$, so that 
$\tilde v\in W^{4,2N/(N-2)}(\R^N)$. A bootstrap argument yields that  $\tilde v\in W^{k, 2N/(N-2)}(\R^N)$,
for any $k\in \N$. By the Sobolev embedding theorem, we conclude that $v\in W^{k,\infty}(\R^N)$ for any $k\in \N$.
Then the conclusion follows as in Lemma~\ref{regularity}.
\end{proof}
\begin{lema}\label{lema-reg2}
 Let $W\in L^1(\R^N)$ if $2\leq N\leq 3$ and $W\in L^1(\R^N)\cap L^N(\R^N)$ if $N\geq 4$.
Then $W$ fulfills \ref{H-infty}.
\end{lema}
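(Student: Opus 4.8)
The plan is to obtain every statement in \ref{H-infty} from Young's convolution inequality, supplemented by one elementary interpolation. Recall that if $W\in L^s(\R^N)$ then $f\mapsto W*f$ maps $L^p(\R^N)$ into $L^q(\R^N)$ with $\norm{W*f}_{L^q}\le\norm{W}_{L^s}\norm{f}_{L^p}$ whenever $1+1/q=1/s+1/p$; that is, $W\in\M_{p,q}(\R^N)$. So the proof reduces to checking, for each target pair $(p,q)$ appearing in \ref{H-infty}, that the companion exponent $s$ determined by $1/s=1+1/q-1/p$ is one for which $W\in L^s(\R^N)$ under the stated hypotheses.

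First I would exploit that $W\in L^1(\R^N)$ in every dimension. Taking $s=1$ forces $q=p$, so $W\in\M_{p,p}(\R^N)$ for all $1\le p\le\infty$. In particular $W\in\M_{2,2}(\R^N)$, which is the entire requirement when $2\le N\le 3$, and the same remark supplies the third membership in \eqref{hyp-high}, namely $W\in\M_{2N/(N-2),2N/(N-2)}(\R^N)$, valid for $N\ge4$.

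It then remains to treat, for $N\ge4$, the first two memberships in \eqref{hyp-high}. For $\M_{N/(N-1),\infty}(\R^N)$ the relation $1/s=1+0-(N-1)/N=1/N$ gives $s=N$, and $W\in L^N(\R^N)$ by hypothesis, so Young yields $\norm{W*f}_{L^\infty}\le\norm{W}_{L^N}\norm{f}_{L^{N/(N-1)}}$. For $\M_{2N/(N-2),\infty}(\R^N)$ the relation $1/s=1-(N-2)/(2N)=(N+2)/(2N)$ gives $s=2N/(N+2)$; here I would invoke that, since $W\in L^1(\R^N)\cap L^N(\R^N)$, interpolation gives $W\in L^s(\R^N)$ for every $s\in[1,N]$, with $\norm{W}_{L^s}\le\norm{W}_{L^1}^{\theta}\norm{W}_{L^N}^{1-\theta}$ where $1/s=\theta+(1-\theta)/N$, and then observe that $1\le 2N/(N+2)\le N$ holds for all $N\ge2$, so that Young applies once more. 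The argument is completely elementary; the only point requiring care is the exponent bookkeeping in $1/s=1+1/q-1/p$ and the trivial verification that $2N/(N+2)$ lies in $[1,N]$, so I do not expect any genuine obstacle here.
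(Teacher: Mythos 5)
Your argument is correct and matches the paper's proof essentially verbatim: Young's inequality with $W\in L^1$ handles all diagonal memberships $\M_{p,p}$ (covering $\M_{2,2}$ and $\M_{2N/(N-2),2N/(N-2)}$), while $W\in L^N$ and the interpolated $W\in L^{2N/(N+2)}$ give the two $L^\infty$-target memberships for $N\geq 4$. The exponent bookkeeping and the check that $2N/(N+2)\in[1,N]$ are exactly the ingredients the paper uses.
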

\begin{proof}
Since $W\in L^1(\R^N)$, 
by the Young inequality we have 
\bqq
\norm{W*f}_{L^p(\R^N)}\leq \norm{W}_{L^1(\R^N)}\norm{f}_{L^p(\R^N)},\quad \text{ for any } p\in [1,\infty].
\eqq
Then, taking $p=2$, we conclude that \ref{H-infty} holds for $2\leq N\leq 3$.
For $N\geq 4$, we have
 $W\in L^1(\R^N)\cap L^N(\R^N)$. In particular, ${W}\in {L^{2N/(N+2)}(\R^N)}$ and the Young inequality
implies that
\begin{align*}
 \norm{W*f}_{L^\infty(\R^N)}&\leq \norm{W}_{L^N(\R^N)}\norm{f}_{L^{N/(N-1)}(\R^N)},\\
\norm{W*f}_{L^\infty(\R^N)}&\leq \norm{W}_{L^{2N/(N+2)}(\R^N)}\norm{f}_{L^{2N/(N-2)}(\R^N)}.
\end{align*}
Therefore   \ref{H-infty} is satisfied.
\end{proof}
 \begin{cor}\label{cor-reg}
 Assume that $W$ satisfies \ref{H-infty}.  
Then $v$ is smooth and bounded.
Moreover,  $\eta$ and $\grad{v}$ belong to $W^{k,p}(\R^N)$, for all $k\in\N$, $2\leq p\leq \infty$, and
\bq\label{cor-conv}
{\rho(x)}\to 1, \ \grad v(x)\to 0, \qquad \text{as} \ \abs{x}\to\infty.
\eq
Furthermore, there exists a smooth lifting  of $v$. More precisely, there exist $R_0>0$
and a smooth real-valued function $\theta$ defined on $B(0,R_0)^c$, with $\grad \theta\in W^{k,p}(B(0,R_0)^c)$,
for all $k\in\N$, $2\leq p\leq \infty$,  such that
\bq\label{lifting}
\rho\geq \frac12\quad \text{ and }\quad v=\rho e^{i\theta}\quad \text{ on } B(0,R_0)^c.
\eq
 \end{cor}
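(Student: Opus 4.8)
The plan is to bootstrap from the two preceding regularity lemmas and then produce the lifting by a standard argument on the region where $\rho$ is close to $1$. First I would note that the hypothesis \ref{H-infty} splits into the cases $2\le N\le 3$ and $N\ge 4$, which are covered by Lemma~\ref{regularity} and Lemma~\ref{reg-high} respectively; in either case we already obtain that $v$ is smooth and bounded and that $\eta,\grad v\in W^{k,p}(\R^N)$ for all $k\in\N$, $2\le p\le\infty$. So the only genuinely new content of the corollary is the decay \eqref{cor-conv} and the construction of the smooth lifting \eqref{lifting}.

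For the decay, the key point is that $\eta$ and $\grad v$ lie in $W^{1,2}(\R^N)$ (indeed in $\bigcap_k W^{k,2}$), so by the Sobolev embedding $W^{k,2}(\R^N)\hookrightarrow C^0_0(\R^N)$ for $k>N/2$, combined with the uniform continuity coming from the $W^{k,\infty}$ bounds, we get $\eta(x)\to 0$ and $\grad v(x)\to 0$ as $\abs{x}\to\infty$. (Equivalently, one can invoke the classical fact that a uniformly continuous $L^2$ function on $\R^N$ tends to $0$ at infinity.) Since $\rho^2=1-\eta$, the convergence $\rho(x)\to 1$ follows immediately, which is \eqref{cor-conv}.

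For the lifting, since $\rho(x)\to 1$ there is $R_0>0$ such that $\rho\ge \tfrac12$ on $B(0,R_0)^c$; in particular $v$ does not vanish there. Because $B(0,R_0)^c$ is simply connected when $N\ge 2$, the standard lifting result for nonvanishing $C^\infty$ maps into $\C^*$ gives a smooth real-valued $\theta$ on $B(0,R_0)^c$ with $v=\rho e^{i\theta}$. The regularity of $\theta$ then propagates: writing $\grad\theta=\langle iv,\grad v\rangle/\rho^2$ (which is legitimate since $\rho\ge\tfrac12$ there), the right-hand side is a smooth function built from $\grad v\in W^{k,p}$, $v$ bounded smooth, and $\rho^{-2}$ bounded smooth with bounded derivatives on $B(0,R_0)^c$; the algebra and product rules for $W^{k,p}\cap L^\infty$ then yield $\grad\theta\in W^{k,p}(B(0,R_0)^c)$ for all $k\in\N$ and $2\le p\le\infty$, which is \eqref{lifting}.

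I do not expect a serious obstacle here: this corollary is essentially a packaging statement, collecting the outputs of Lemmas~\ref{regularity} and \ref{reg-high} and adding the routine decay-at-infinity and lifting arguments. The only point requiring a little care is checking that the estimate $\grad\theta=\langle iv,\grad v\rangle/\rho^2$ indeed transfers all the Sobolev regularity, i.e. that multiplication by the bounded smooth function $\rho^{-2}$ (with all derivatives bounded on $B(0,R_0)^c$) preserves $W^{k,p}$; this is handled by the Leibniz rule together with the fact that $\eta$ and $\grad v$, hence $\rho$ and all its derivatives of order $\ge 1$, lie in $W^{k,p}\cap L^\infty$.
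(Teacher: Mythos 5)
Your outline is essentially the paper's, except for one genuine gap. You justify the existence of the lifting by asserting that $B(0,R_0)^c$ is simply connected when $N\geq 2$. This is false for $N=2$: the exterior of a disk in $\R^2$ is homotopy equivalent to $S^1$, with fundamental group $\Z$. In that case the map $v/\rho\colon B(0,R_0)^c\to S^1$ admits a global continuous (hence smooth) lifting only if its restriction to large circles has degree zero. That degree is indeed zero for finite-energy solutions, but this requires a separate argument (the standard one uses a $\log R$ divergence of the Dirichlet energy of $\grad\theta$ if the degree were nonzero, contradicting $\grad v\in L^2$). The paper sidesteps this by citing \cite[Proposition 2.5]{maris-non}, which is precisely a lifting result adapted to the finite-energy setting and covers $N=2$. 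For $N\geq 3$ your simple-connectedness argument is correct and agrees with the standard topology.

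The remaining parts of your proof track the paper closely. The decay \eqref{cor-conv} via uniform continuity plus membership in $L^2$ is exactly the paper's step. For the $W^{k,p}$ regularity of $\grad\theta$, the paper deduces the estimate from the polar identity $\abs{\grad v}^2=\abs{\grad\rho}^2+\rho^2\abs{\grad\theta}^2$ on $B(0,R_0)^c$ together with $\rho\geq 1/2$, while you use the equivalent (and arguably cleaner for higher derivatives) formula $\grad\theta=\langle iv,\grad v\rangle/\rho^2$ and the Leibniz rule in $W^{k,p}\cap W^{k,\infty}$. Both routes are sound and essentially interchangeable; your observation that multiplication by $\rho^{-2}$ (bounded, with all derivatives lying in $W^{k,p}\cap L^\infty$ thanks to $\grad\rho$ inheriting the regularity of $\grad v$ and $\eta$) preserves $W^{k,p}$ is the right technical point to check. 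Once you patch the $N=2$ lifting step, the proposal is correct.
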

\begin{proof}
The first part is exactly Lemmas \ref{regularity} and \ref{lema-reg2}. In particular, 
$v$ and $\grad v$ are  uniformly continuous on $\R^N$.
Then, since $1-\abs{v}^2\in L^2(\R^N)$ and ${\grad v}\in L^2(\R^N)$, we obtain \eqref{cor-conv}.
The existence of the lifting satisfying \eqref{lifting} follows as in \cite[Proposition 2.5]{maris-non}. From  
\eqref{lifting} we also deduce that
\bqq
\abs{\grad v}^2=\abs{\grad \rho }^2+\rho^2\abs{\grad \theta}^2 \quad \text{ on } B(0,R_0)^c.
\eqq
Since $\rho\geq 1/2$ on $B(0,R_0)^c$, we infer that $\grad \theta\in W^{k,p}(B(0,R_0)^c)$,
for all $k\in\N$, $2\leq p\leq \infty$.
\end{proof}

 In virtue of Corollary \ref{cor-reg}, we introduce the function $\phi\in C^\infty(\R^N)$, 
 $\abs{\phi}\leq 1$, such that $\phi=0$ on $B(0,2R_0)$ and $\phi=1$ on $B(0,3R_0)^c$.
In this way,  we can assume the function $\phi \theta$ is well-defined on $\R^N$.
This will be useful in the next section to work with global functions in terms of $\theta$.
In fact, we end this section with the following result.

\begin{lema}\label{lemma-G}
 Assume that $W$ satisfies \ref{H-infty}.
Then \bq\label{G}
G:=v_1\grad v_2-v_2\grad v_1-\grad(\phi \theta),\quad \text{on } \R^N,\eq
 belongs to $ W^{k,p}(\R^N)$, for all $k\in \N$ and  $1\leq p\leq \infty$.
\end{lema}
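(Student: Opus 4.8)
The plan is to show that $G$ is smooth and that each of its pieces decays appropriately at infinity, so that $G\in W^{k,p}(\R^N)$ for every $k$ and every $p\in[1,\infty]$. First I would note that, by Corollary~\ref{cor-reg}, $v$ is smooth and bounded with $\grad v\in W^{k,p}(\R^N)$ for all $k$ and all $p\in[2,\infty]$; also $\theta$ is smooth on $B(0,R_0)^c$ with $\grad\theta\in W^{k,p}(B(0,R_0)^c)$ for the same range. Since $\phi$ is a fixed smooth cutoff equal to $1$ outside $B(0,3R_0)$, the function $\grad(\phi\theta)=\phi\grad\theta+\theta\grad\phi$ is smooth on all of $\R^N$, and outside $B(0,3R_0)$ it equals $\grad\theta$. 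Hence $G$ is smooth on $\R^N$; the only issue is integrability of $G$ and its derivatives, and by the decay already established we only need to worry about the region near infinity (on any bounded set everything is smooth and bounded, hence in $L^p$ for every $p$).

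The heart of the matter is to identify $G$ with a good expression on $B(0,3R_0)^c$. There, using the lifting $v=\rho e^{i\theta}$ from \eqref{lifting}, one computes
\begin{equation*}
v_1\grad v_2-v_2\grad v_1=\rho^2\grad\theta,
\end{equation*}
and since $\phi=1$ there, $\grad(\phi\theta)=\grad\theta$, so that
\begin{equation*}
G=\rho^2\grad\theta-\grad\theta=(\rho^2-1)\grad\theta=-\eta\,\grad\theta\qquad\text{on } B(0,3R_0)^c.
\end{equation*}
Now $\eta=1-\abs{v}^2\in W^{k,p}(\R^N)$ for all $k$ and $p\in[2,\infty]$, and $\grad\theta\in W^{k,p}(B(0,R_0)^c)$ for the same range; since both factors are bounded with all derivatives bounded, the product $\eta\,\grad\theta$ lies in $L^q$ for every $q\in[1,\infty]$ on $B(0,3R_0)^c$ (writing $\eta\grad\theta\in L^2\cdot L^\infty\subset L^2$ gives $q=2$, and boundedness gives $q=\infty$; interpolating and using that lower integrability near infinity follows from $L^2\cap L^\infty$ — more precisely $\eta\grad\theta\in L^2$ already forces membership in $L^q$ for $q\ge 2$, while for $1\le q<2$ one uses the Leibniz rule to see every derivative of $\eta\grad\theta$ is a finite sum of products of an $L^2$ function with bounded functions, hence in $L^2$, and then a standard argument shows a smooth function all of whose derivatives lie in $L^2$, together with the decay $\eta\to 0$, $\grad\theta\to 0$, actually lies in $W^{k,p}$ for all $p\ge 1$). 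The cleanest route for the full range $p\in[1,\infty]$ is: differentiate $-\eta\grad\theta$ by the Leibniz rule, obtaining a finite sum of terms each of which is a product of a derivative of $\eta$ (in $L^2$) with derivatives of $\grad\theta$ (bounded); these are in $L^2$. Then invoke the fact, already used implicitly in the regularity section, that a smooth function $f$ on $B(0,3R_0)^c$ with $f$ and all derivatives vanishing at infinity and $\grad f\in L^2$ of all orders belongs to $W^{k,p}$ for all $p\ge 2$; combined with $L^1$-integrability coming from the product structure $\eta\in L^2$, $\grad\theta\in L^2$ (so $\eta\grad\theta\in L^1$ by Cauchy--Schwarz), Hölder interpolation closes the gap $1\le p\le 2$.

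To assemble the conclusion, I would write $G=\phi\cdot G+(1-\phi)\cdot G$. On $\supp(1-\phi)\subset B(0,3R_0)$, the function $G$ is smooth and bounded with all derivatives bounded, hence $(1-\phi)G\in W^{k,p}(\R^N)$ for all $k$ and all $p\in[1,\infty]$ trivially (compact support). On $\supp\phi\subset B(0,2R_0)^c$, for the part where we are still inside $B(0,3R_0)$ the same compact-support remark applies, and outside $B(0,3R_0)$ we use the identity $G=-\eta\grad\theta$ together with the product estimates above to get $\phi G\in W^{k,p}(\R^N)$ for all $k$ and $p\in[1,\infty]$. Adding the two pieces gives $G\in W^{k,p}(\R^N)$ for all $k\in\N$ and $1\le p\le\infty$, as claimed. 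The main obstacle is purely bookkeeping: correctly passing from the statement "$\grad\theta\in W^{k,p}$ only for $p\ge2$" and "$\eta\in W^{k,p}$ only for $p\ge2$" to membership of the product in $W^{k,p}$ for the full range $p\in[1,\infty]$, which hinges on the extra gain that a product of two $L^2$ functions is $L^1$ and that the cutoff $\phi$ kills the local (bounded-set) contribution where no decay is available.
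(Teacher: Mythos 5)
Your proof is correct and takes essentially the same route as the paper: identify $G=-\eta\,\grad\theta$ on $B(0,3R_0)^c$ via the lifting, then combine $\eta,\grad\theta\in W^{k,p}(B(0,R_0)^c)$ for $p\ge 2$ with the Leibniz rule and the Cauchy--Schwarz bound $L^2\cdot L^2\subset L^1$ to cover $1\le p<2$. One caution: the intermediate claim that a smooth function with all derivatives in $L^2$ and vanishing at infinity lies in $W^{k,p}$ for all $p\ge 1$ is false as stated (take $(1+|x|^2)^{-N/4-\varepsilon}$), but you do not actually rely on it, since each Leibniz term $\partial^\alpha\eta\cdot\partial^\beta\grad\theta$ is directly in $L^1\cap L^\infty$ from the $W^{k,2}\cap W^{k,\infty}$ regularity of both factors, which is the argument that closes the gap.
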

\begin{proof}
 By Corollary \ref{cor-reg}, $G\in C^\infty(\R^N)$ and moreover 
\bqq
G=-\eta\grad \theta \quad \text {on }B(0,3R_0)^c.
\eqq
Since $\grad \theta\in W^{k,p}(B(0,R_0)^c)$ and $\eta \in W^{k,p}(B(0,R_0)^c)$, for all $k\in \N$, $2\leq p\leq \infty$,
the conclusion follows.
\end{proof}
\section{An integral identity}\label{fourier}
The aim of this section is to prove the following integral identity.
\begin{prop}\label{prop-fourier}
Let $c>0$. Suppose that \ref{H-infty} and \ref{H-gamma} hold with $\ell_{j,c}>0$, for some
$j\in \{2,\dots,N\}$. Then  
\bq\label{id-fourier}
\intR (\abs{\grad v}^2+\eta(W*\eta))=
-c\frac{\ell_{j,c}}{1+\ell_{j,c}}\intR{(v_1\ptl_1 v_2-v_2\ptl_1 v_1-\ptl_1(\phi \theta))}.
\eq
\end{prop}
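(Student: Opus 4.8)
The strategy is to reproduce, in the nonlocal setting, Gravejat's Fourier-analytic argument for the local Gross--Pitaevskii equation. The starting point is the equation satisfied by $\eta = 1 - \abs{v}^2$. Taking the real part of \eqref{TW} after multiplying by $\bar v$ (or equivalently computing $\Delta \eta$ directly), one finds that $\eta$ solves a linear equation of the form
\bq\label{eq-eta-plan}
\Delta^2 \eta - 2\Delta(W*\eta) + c^2 \ptl_{11}\eta = \Delta F + c\,\ptl_1 H,
\eq
where $F$ and $H$ are quadratic expressions in $v$, $\grad v$ and $W*\eta$ (concretely, $F$ involves $\abs{\grad v}^2$, $\eta(W*\eta)$ and second derivatives of $\abs{v}^2$, while $H$ is built from the momentum density $v_1\ptl_1 v_2 - v_2 \ptl_1 v_1$); by Corollary~\ref{cor-reg} and Lemma~\ref{lemma-G} all these quantities lie in $L^2(\R^N)$ (after subtracting off the $\grad(\phi\theta)$ correction where needed), so one may pass to the Fourier side. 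On the Fourier side the symbol on the left of \eqref{eq-eta-plan} is $\abs{\xi}^4 + 2\wh W(\xi)\abs{\xi}^2 - c^2\xi_1^2$, which vanishes precisely on the set whose $(e_1,e_j)$-slice is $\Gamma_{j,c}$; this is exactly why hypothesis \ref{H-gamma} is imposed.

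The core of the argument is to extract a scalar identity by testing the Fourier-transformed equation against suitable measures. First I would integrate the transformed relation over a slab or use the structure of the zero set: because $\wh\eta$ is continuous (indeed $\eta\in L^1\cap L^2$ is not automatic, but $\wh\eta\in L^2$ and the equation forces the combination $\abs{\xi}^4 + 2\wh W\abs{\xi}^2 - c^2\xi_1^2$ times $\wh\eta$ to equal a bounded-near-zero function), one evaluates a limit of integrals of $\wh\eta$ along the curves $\gamma_{j,c}^\pm$ as they pinch toward the origin. The quantity $\ell_{j,c} = \lim_{t\to 0^+}(\gamma_{j,c}^\pm(t)/t)^2$ controls the angle at which the zero set approaches $0$, and dividing the Fourier identity by the symbol and integrating in the slice variable produces, in the limit $t\to 0^+$, a factor $\ell_{j,c}/(1+\ell_{j,c})$ from the ratio of the coefficients of $\xi_1^2$ and of the transverse variable in the quadratic expansion of the symbol near the origin. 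The left-hand side of \eqref{id-fourier}, namely $\intR(\abs{\grad v}^2 + \eta(W*\eta))$, appears as $(2\pi)^{-N}$ times the value at $\xi = 0$ of the Fourier transform of $F$ (up to the precise bookkeeping of which quadratic terms survive integration by Plancherel), while the right-hand side momentum term is the value at $\xi=0$ associated with $H$; the coefficient $-c\,\ell_{j,c}/(1+\ell_{j,c})$ is the geometric factor just described.

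More carefully, I expect the rigorous implementation to proceed by: (1) rewriting \eqref{TW} to obtain \eqref{eq-eta-plan} with explicit $F$, $H$, checking the integrability of every term using Corollary~\ref{cor-reg}, Lemma~\ref{lemma-G} and hypothesis \ref{H-infty} (this uses that $W*\eta$ and its derivatives are in $L^\infty\cap L^2$); (2) taking Fourier transforms and writing $P_{j,c}(\xi)\wh\eta(\xi) = \wh{(\text{r.h.s.})}(\xi)$ where $P_{j,c}$ is the symbol; (3) choosing a test function supported near the origin and adapted to the slab around the $(\nu_1,\nu_2)$-plane, dividing by $P_{j,c}$, and using \ref{H-gamma} to parametrize the near-zero level set by $\gamma_{j,c}^\pm$ so that the singular integral is controlled and its limit identified; (4) letting the support shrink and using continuity of the integrands at the origin (which needs the $C^2$-type regularity of $\wh W$ near $0$ encoded through $\ell_{j,c}$) to read off \eqref{id-fourier}.

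\textbf{Main obstacle.} The delicate point is step~(3): justifying the passage to the limit in the integral of $\wh\eta$ over vanishing neighborhoods of the zero set $\Gamma_{j,c}$, and proving that the singular factor $1/P_{j,c}$ produces exactly the constant $\ell_{j,c}/(1+\ell_{j,c})$ and no spurious boundary contributions. This requires a careful local analysis of $P_{j,c}$ near the origin — quadratic in the transverse direction, quadratic in $\xi_1$, with the two curves $\gamma_{j,c}^\pm$ bounding the region where $P_{j,c}<0$ — together with uniform bounds on $\wh\eta$ and on the Fourier transforms of $F$ and $H$ near $0$ (continuity of these at the origin, via the decay of $v-1$, $\grad v$, $\eta$ from Corollary~\ref{cor-reg}). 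The fact that $\wh W$ may only be controlled through \ref{H-gamma} rather than being $C^2$ everywhere is precisely what forces the argument to be phrased via the curves $\gamma_{j,c}^\pm$ rather than by a direct Taylor expansion, and making this robust is the heart of the proof; the algebraic identification of constants afterward is routine.
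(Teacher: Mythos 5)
Your setup is the same as the paper's: differentiate $\eta$ twice, use \eqref{TW} to obtain $\Delta^2\eta - 2\Delta(W*\eta) + c^2\ptl_{11}^2\eta = -\Delta F + 2c\,\ptl_1(\div G)$ with $F = 2\abs{\grad v}^2 + 2\eta(W*\eta) + 2cG_1$ and $G$ from Lemma~\ref{lemma-G}, then pass to the Fourier side to get $R(\xi)\wh\eta(\xi) = H(\xi)$ with $R(\xi)=\abs{\xi}^4+2\wh W(\xi)\abs{\xi}^2-c^2\xi_1^2$ and $H(\xi)=\abs{\xi}^2\wh F(\xi)-2c\sum_j\xi_1\xi_j\wh G_j(\xi)$. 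But the engine that actually extracts the scalar identity is not what you describe, and your version has a genuine gap at exactly the point you flag as the ``main obstacle.'' You propose to divide by the symbol and integrate $\wh\eta$ over shrinking neighborhoods or slabs around $\Gamma_{j,c}$. That cannot work as written: $R$ vanishes to first order transversally to the curves $\gamma_{j,c}^\pm$, so $1/R$ is not locally integrable on the two-dimensional slice, and $\wh\eta$ is only known to be in $L^2$, so neither ``$\wh\eta$ evaluated along the curves'' nor ``$\int \wh\eta$ over vanishing neighborhoods'' has an a priori meaning. Your worry about whether $\wh\eta$ is continuous is symptomatic of this — you would need pointwise control on $\wh\eta$ that you do not have.

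The paper's decisive step (Lemma~\ref{H-cero}) turns this singularity into the source of information rather than the obstacle: since $F,G\in L^1(\R^N)$, the function $H$ is \emph{continuous}, and if $H$ were bounded away from zero at some $\tilde\xi\in\Gamma_{j,c}$ then $\abs{\wh\eta}\geq A/\abs{R}$ on a small two-dimensional neighborhood; straightening the curve $\gamma_{j,c}^+$ and Taylor-expanding shows $\abs{R}\lesssim \abs{\nu_2}$ in the transverse coordinate, so $\int 1/R^2 = +\infty$, contradicting $\wh\eta\in L^2$. Hence $H$ \emph{vanishes identically} on $\Gamma_{j,c}\cap B(0,\delta)$. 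With this in hand there is no singular integral at all: you write $H(te_1+\gamma_{j,c}^\pm(t)e_j)=0$ explicitly, divide by $t^2$, and let $t\to 0^+$ using $\gamma_{j,c}^\pm(t)/t\to\pm\sqrt{\ell_{j,c}}$ and the continuity at $\xi=0$ of $\wh F,\wh G$ (not of $\wh\eta$). The two choices of sign give a pair of linear relations whose difference forces $\wh G_j(0)=0$ and whose sum gives $(1+\ell_{j,c})\wh F(0)=2c\wh G_1(0)$, which is \eqref{id-fourier}. So the missing idea in your proposal is that $H$ must \emph{vanish} (not merely be bounded) on the zero set, proved by contradiction against $\wh\eta\in L^2$; once you have this, the ``geometric factor'' $\ell_{j,c}/(1+\ell_{j,c})$ drops out by elementary algebra, with no test functions, slabs, or limits of integrals of $\wh\eta$.
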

We note  that  since  $W$ satisfies \ref{H-infty},  all the results
 of Section~\ref{section-regularity} hold. On the other hand, from \eqref{TW} we deduce that 
  $\eta=1-\abs{v}^2$ satisfies 
\bq\label{dem-eta1}
\Delta \eta=-F+2W*\eta-2c\ptl_1(\phi\theta).
\eq
where 
$$F:=2\abs{\grad v}^2+2\eta(W*\eta)+2cG_1.$$
and
$G=(G_1,\dots,G_N)$ was defined in \eqref{G}. Considering real and imaginary parts in \eqref{TW} and multiplying them by $v_2$ and $v_1$, respectively,
it follows that 
\bq\label{dem-eta2}
\div(G)=v_1\Delta v_2-v_2\Delta v_1-\Delta (\phi \theta)=\frac c2\ptl_1\eta-\Delta(\phi \theta).
\eq
Therefore, from \eqref{dem-eta1} and \eqref{dem-eta2}, we conclude that 
\bq\label{bi}
\Delta^2 \eta -2\Delta (W*\eta)+c^2\ptl_{11}^2\eta=-\Delta F+2c\ptl_1(\div G), \quad \text{ in }\R^N.
\eq
Since we are assuming \ref{H-infty}, by Corollary \ref{cor-reg} and Lemma~\ref{lemma-G}, 
we have that $F,G \in W^{k,1}(\R^N)\cap W^{k,2}(\R^N)$, for all $k\in \N$,
so that \eqref{bi} stands in $L^2(\R^N)$. Taking the Fourier transform in equation \eqref{bi}
and setting
\bqq
R(\xi):=\abs{\xi}^4+2\wh W(\xi)\abs{\xi}^2-c^2\xi_1^2\quad \text{and}\quad H(\xi):=\abs{\xi}^2 \wh F(\xi)-2c\sum_{j=1}^N \xi_1\xi_j \wh G_j(\xi),
\eqq
we get
\bq\label{id-int}
R(\xi)\wh \eta(\xi)=H(\xi),\quad \text{ in }L^2(\R^N).
\eq
\begin{lema}\label{H-cero}
Let $c>0$. Suppose that \ref{H-infty} and  \ref{H-gamma} hold. Then for all
$j\in \{2,\dots,N\}$, 
\bq\label{H-igual}
H(te_1+ \gamma^\pm_{j,c}(t) e_j)=0, \quad \text{ for all } t\in (0,\delta),
\eq
where $\delta$ is given by \ref{H-gamma}.
\end{lema}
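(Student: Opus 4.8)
The plan is to play the pointwise identity \eqref{id-int}, $R\wh\eta=H$, off against two facts: $H$ is continuous and $\wh\eta\in L^2(\R^N)$. If $H$ did not vanish at a point $\xi_0$ of the characteristic set $\{R=0\}$ at which $R$ has a simple (first-order) zero, then near $\xi_0$ we would have $|\wh\eta|=|H|/|R|$ comparable to $1/|R|$, which is not square-integrable, contradicting $\wh\eta\in L^2$. Condition \ref{H-gamma} is exactly what is needed to place the curves $\gamma^\pm_{j,c}$ on such a portion of $\{R=0\}$.

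First I would record that $H\in C(\R^N)$: by Corollary~\ref{cor-reg} and Lemma~\ref{lemma-G}, $F$ and each component $G_k$ of $G$ lie in $W^{m,1}(\R^N)$ for every $m$, so $\wh F$ and $\wh G_k$ are bounded and continuous, whence $H(\xi)=|\xi|^2\wh F(\xi)-2c\sum_{k=1}^N\xi_1\xi_k\wh G_k(\xi)$ is continuous. Since $\eta\in L^2(\R^N)$ we have $\wh\eta\in L^2(\R^N)$, and \eqref{id-int} holds a.e. Now fix $j\in\{2,\dots,N\}$ and $t_0\in(0,\delta)$, set $\xi_0:=t_0e_1+\gamma^+_{j,c}(t_0)e_j$ (the case of $\gamma^-_{j,c}$ is identical), and assume for contradiction $H(\xi_0)\ne0$. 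By continuity there are $\ve>0$ and a ball $B=B(\xi_0,r)$ on which $|H|\geq\ve$; hence $\|\wh\eta\|_{L^2(\R^N)}^2\geq\ve^2\int_B R^{-2}$, and it suffices to show $\int_B R^{-2}=\infty$ once $r$ is small.

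The heart of the matter is the local geometry of $\{R=0\}$ near $\xi_0$. Since $t_0>0$ and $\gamma^+_{j,c}(t_0)>0$, on a small enough $B$ the coordinates $\xi_1$ and $\xi_j$ stay bounded away from $0$, so by \ref{H-resto} the function $\ptl_j\wh W=(\xi_j\ptl_j\wh W)/\xi_j$ coincides a.e.\ on $B$ with a bounded function; together with $\wh W\in L^\infty(\R^N)$ this gives $|\ptl_j R|=|4|\xi|^2\xi_j+2(\ptl_j\wh W)|\xi|^2+4\wh W\xi_j|\leq L$ a.e.\ on $B$, so that $R$ is Lipschitz of constant $L$ in the variable $\xi_j$ on a.e.\ slice $\{\xi_1,\xi''\ \text{fixed}\}$. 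By \ref{H-gamma} the curve $t\mapsto te_1+\gamma^+_{j,c}(t)e_j$ lies in $\{R=0\}$, and the Morse-type analysis of Section~\ref{sec-morse} (cf.\ Lemma~\ref{curva}) shows that along it $R$ changes sign transversally; hence, shrinking $r$, one has $\{R=0\}\cap B=\{\xi_j=g(\xi_1,\xi'')\}$ for a continuous $g$ with $g(\xi_1,0)=\gamma^+_{j,c}(\xi_1)$, and $|R(\xi)|\leq L\,|\xi_j-g(\xi_1,\xi'')|$ on $B$. By Tonelli,
\[
\int_B R(\xi)^{-2}\,d\xi\ \geq\ \frac1{L^{2}}\int\!\!\int_B |\xi_j-g(\xi_1,\xi'')|^{-2}\,d\xi_j\,d(\xi_1,\xi'')\ =\ \infty,
\]
the contradiction sought. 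Thus $H(\xi_0)=0$ for every $t_0\in(0,\delta)$, and likewise with $\gamma^-_{j,c}$, which is \eqref{H-igual}.

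I expect the main obstacle to be precisely this last step. Under \ref{H-infty}--\ref{H-gamma} one knows only that $\wh W$ is a.e.\ differentiable with $\xi_k\ptl_k\wh W$ bounded and a.e.\ continuous, so $R$ need not be $C^1$ --- indeed, not even continuous --- in the directions $\xi_k$, $k\notin\{1,j\}$, and the implicit function theorem cannot be used directly. The transverse-zero description of $\{R=0\}$ near $\xi_0$ has to be extracted from \ref{H-gamma} and the two-dimensional Morse analysis of $\w_j$ carried out in Section~\ref{sec-morse}; and since $\{R=0\}$ genuinely degenerates at the origin, one can only control it along the curves $\gamma^\pm_{j,c}$ and for $t$ bounded away from $0$, which is why the statement is localized to those curves and to $t\in(0,\delta)$. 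Note also that only the single ``good'' direction $\xi_j$ --- in which $\xi_j\ptl_j\wh W$, and hence $\ptl_j R$, is bounded near $\xi_0$ --- is needed to produce the non-integrability of $R^{-2}$.
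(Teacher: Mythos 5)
Your overall strategy --- $H$ continuous, assume $|H(\xi_0)|\ge\ve$ for contradiction, pass to the pointwise lower bound $|\wh\eta|\ge\ve/|R|$, and then show a divergent integral of $R^{-2}$ --- is indeed the one the paper follows. But there is a genuine gap in the step where you describe $\{R=0\}$ near $\xi_0$, and it leads you to set up the integral in a way that is not justified by the hypotheses.

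You claim that, after shrinking the ball $B$, one has $\{R=0\}\cap B=\{\xi_j=g(\xi_1,\xi'')\}$ for a continuous $g$ with $g(\xi_1,0)=\gamma^+_{j,c}(\xi_1)$, and then you run Tonelli in $\R^N$ over the $(N-1)$-dimensional base $(\xi_1,\xi'')$. This graph description is not available. Hypothesis \ref{H-gamma} pins down $\{R=0\}$ only on the two-dimensional slice $\{\xi_k=0,\ k\ne 1,j\}$; it gives no information whatsoever about $R$ for $\xi''\ne 0$. Thus you only know $R$ vanishes along the one-parameter curve $t\mapsto te_1+\gamma^+_{j,c}(t)e_j$, which is a measure-zero family of $(\xi_1,\xi'')$-slices, and the Tonelli computation does not deliver a divergent $N$-dimensional integral: you would need a zero of $R(\xi_1,\cdot,\xi'')$ for a positive-measure set of bases $(\xi_1,\xi'')$, which you do not have. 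You also appeal to the ``Morse-type analysis of Section~\ref{sec-morse}'' (Lemma~\ref{curva}) to conclude that $R$ ``changes sign transversally,'' but Lemma~\ref{curva} requires \ref{H-reg} ($\wh W\in C^2$ near the origin), which is \emph{not} assumed in the present lemma --- the lemma is proved under \ref{H-infty} and \ref{H-gamma} only, and Section~\ref{sec-morse} is there precisely to show that \ref{H-reg} \emph{implies} \ref{H-gamma}, not to supplement \ref{H-gamma}. You flag the obstruction yourself at the end (``the implicit function theorem cannot be used directly''), but the proof as written nonetheless relies on the unavailable hypersurface/graph structure.

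The paper avoids this entirely by never leaving the two-dimensional slice. It sets $V_r = B(\tilde\xi,r)\cap\{\alpha e_1+\beta e_j\}$, where by continuity of $H$ one still has $|H|\ge A$, and works with the 2D integral $\int_{V_r\setminus\Gamma_{j,c}} d\xi_1\,d\xi_j/R^2$. There, the unknown directions $\xi_k$, $k\notin\{1,j\}$, simply do not enter: \ref{H-gamma} describes the curves $\gamma^\pm_{j,c}\subset\Gamma_{j,c}$ inside the slice, the change of variables $(\xi_1,\xi_j)\mapsto(\nu_1,\nu_2)=(\xi_1,\xi_j-\gamma^+_{j,c}(\xi_1))$ straightens the curve with unit Jacobian, and the Lipschitz bound on $R\circ\Phi$ in the $\nu_2$-variable (which follows from \ref{H-infty} and \ref{H-resto}: $\wh W$ and $\xi_k\partial_k\wh W$ bounded, and $\xi_1,\xi_j$ bounded away from $0$ on $V_r$) gives $|R\circ\Phi(\nu)|\le C|\nu_2|$, whence the 2D integral diverges. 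So the core analytic estimate (Lipschitz bound transverse to the curve) is the same idea you use, but the paper applies it only in the slice where the needed structure is guaranteed. If you restrict your argument to the 2D slice in the same way, you remove both problems: you no longer need to understand $\{R=0\}$ off the slice, and you no longer need Lemma~\ref{curva}.
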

\begin{proof}
We fix $j\in \{2,\dots,N\}$ and we prove \eqref{H-igual} for $\gamma^+_{j,c}$, since the proof for  $\gamma^-_{j,c}$ is  analogous. 
To simplify the notation, we put $\gamma:= \gamma^+_{j,c}$. As stated before,
  $F,G \in W^{k,1}(\R^N)\cap W^{k,2}(\R^N)$, for all $k\in \N$. In particular $F, G\in L^1(\R^N)$,
so that $\wh F$, $\wh G\in C(\R^N)$. Thus $H$ is a continuous function on $\R^N$. 

Let $\delta>0$ given by \ref{H-gamma}. Arguing by contradiction, we suppose that there exist $t_0\in (0,\delta)$ and a constant $A>0$
such that $\abs{H(\tilde \xi)}\geq A$, where $\tilde \xi=t_0e_1+ \gamma(t_0) e_j$. By the continuity of $H$,
there exists $r>0$ such that 
$\abs{H(\xi)}\geq A$, for all $\xi \in V_r$, where
 $$V_r=B(\tilde \xi,r)\cap \{\alpha e_1+\beta e_j : \alpha,\beta \in \R\}.$$
Thus $V_r$ is a two-dimensional set and since $t_0>0$, we can choose
$r$ small enough such that $0\notin V_r$.
Then \eqref{id-int} yields
\bq\label{H-0-1}
\abs{\wh \eta(\xi)}^2\geq \frac{A^2}{(R(\xi))^2}, \quad \text{for all }\xi \in V_r\setminus \Gamma_{j,c}.
\eq
We claim that
\bq\label{H-0-2}
I:=\int_{V_r\setminus  \Gamma_{j,c}}\frac{d\xi_1 d\xi_j}{(R(\xi))^2}=+\infty.
\eq
Since by hypothesis $\Gamma_{j,c}\cap B(0,\delta)$ has measure zero,
\eqref{H-0-1} and \eqref{H-0-2} contradict that $\wh \eta\in L^2(\R^N)$.

To prove \eqref{H-0-2}, since $V_r$ is a two-dimensional set, we identify it as a subset of $\R^2$ and so that
we write $e_2$ instead of $e_j$. Then, since $\Gamma_{j,c}\cap B(0,\delta)$ has measure zero, 
$$I=\int_{V_r}\frac{d\xi_1 d\xi_2}{(R(\xi))^2}.$$
To compute the integral we ``straighten out'' the curve $\gamma$. Namely, we introduce the change of variables 
\begin{align*}
\xi_1&=\nu_1=:\Phi_1(\nu_1,\nu_2),\\
\xi_2&=\nu_2+\gamma(\nu_1)=:\Phi_2(\nu_1,\nu_2).
\end{align*}
Since $\gamma$ is a $C^1$-function, so is $\Phi$. 
Moreover, there is  some set $U_r$ such that $V_r=\Phi(U_r)$ and 
 $\abs{\det(J\Phi(\nu))}=1$  for  all $\nu\in U_r$. 
Setting $F(\nu):=R(\Phi(\nu))$, $\nu \in U_r$, the change of variables theorem yields
\bq\label{int-impropia}
I=\int_{U_r}\frac{d\nu_1 d\nu_2}{(F(\nu))^2}.
\eq
Furthermore, since $F\in C^1(U_r)$ and $F(\nu_1,0)=0$ for all $(\nu_1,0)\in U_r$, the Taylor theorem
implies that for any $(\nu_1,\nu_2)\in U_r$, there is some $\bar  \nu\in U_r$ such that
\bq\label{taylor-F}
F(\nu_1,\nu_2)=F(\nu_1,0)+\frac{\ptl F}{\ptl \nu_2}(\bar \nu)\nu_2=\frac{\ptl F}{\ptl \nu_2}(\bar \nu)\nu_2, 
\eq
 On the other hand, 
by \ref{H-infty}, $\wh W\in L^\infty(\R^N)$ and by 
\ref{H-resto}, $\grad W\in L^\infty(V_r)$, so that 
$\norm{ \wh W}_{W^{1,\infty}(V_r)}<\infty$.
Thus $\norm{\grad F}_{L^{\infty}(U_r)}\leq C(r,\gamma)(1+\norm{ \wh W}_{W^{1,\infty}(V_r)})$
and from \eqref{taylor-F} we conclude that
\bq\label{cota-abajo}
\abs{F(\nu)}\leq C(r,\gamma)(1+\norm{ \wh W}_{W^{1,\infty}(V_r)}) \abs{\nu_2},  \quad \text{for all }\nu \in U_r.
\eq
 From \eqref{int-impropia} and \eqref{cota-abajo}, taking $\tilde \nu=(\tilde \nu_1,\tilde \nu_2)\in U_r$ such that $\tilde \xi=\Phi(\tilde  \nu)$
and  $\ve>0$ small enough, we conclude that 
$$I\geq C(r,\gamma,\wh W) \int_{U_r}  \frac{d\nu_1 d\nu_2}{\nu_2^2}\geq C(r,\gamma,\wh W)   
\int_{\tilde\nu_1-\ve}^{\tilde \nu_1+\ve} \int_{-\ve}^{\ve} \frac{d\nu_2 d \nu_1}{\nu_2^2}=+\infty,$$
which concludes the proof.
\end{proof}

Finally, we give the proof of identity \eqref{id-fourier}.
\begin{proof}[Proof of Proposition \ref{prop-fourier}]
By Lemma \ref{H-cero},  setting $\xi^{\pm}(t)=t e_1+\gamma_{j,c}^\pm (t)e_j,$ we have
\begin{align*}
(t^2+(\gamma_{j,c}^\pm(t))^2) \wh F(\xi^\pm(t))-2ct^2\wh G_1(\xi^\pm(t))-2ct \gamma_{j,c}^\pm(t) \wh G_j(\xi^\pm(t))=0,  \quad t\in (0,\delta).
\end{align*}
Dividing by $t^2$ and passing to the limit $t\to 0^+$,
\bqq
(1+\ell_{j,c})\wh F(0)-2c \wh G_1(0)  -2c\sqrt{\ell_{j,c}} \wh G_j(0)=(1+\ell_{j,c})\wh F(0)-2c \wh G_1(0) +2c\sqrt{\ell_{j,c}} \wh G_j(0)=0.
\eqq
Therefore, since $\ell_{j,c}> 0$, $\wh G_j(0)=0$ and $(1+\ell_{j,c})\wh F(0)=2c \wh G_1(0)$, which is precisely \eqref{id-fourier}.
\end{proof}

As a consequence of Proposition \ref{prop-fourier}, we obtain Lemma \ref{iguales}.
 \begin{proof}[Proof of Lemma \ref{iguales}]
 From \eqref{id-fourier}, setting
\bqq
 J(v)=\intR (\abs{\grad v}^2+\eta(W*\eta))\text{  and  }  P(v)=\intR{(v_1\ptl_1 v_2-v_2\ptl_1 v_1-\ptl_1(\phi \theta))},
\eqq
 we infer that 
 \bq\label{laultima}
\ell_{j,c}(J(v)+cP(v))=-J(v).
\eq
Since $v$ is nonconstant and $\wh W\geq 0$, we have that $J(v)>0$. 
Then we deduce from \eqref{laultima} that $J(v)+cP(v)\neq 0$ and 
$$\ell_{j,c}=-\frac{J(v)}{J(v)+cP(v)}.$$
Since the r.h.s. of the equality does not depend on $j$, the conclusion follows.
 \end{proof}
\section{The set $\Gamma_{j,c}$ under the condition \ref{H-reg}}\label{sec-morse}
In Section \ref{fourier} we have seen that identity \eqref{id-fourier} is a consequence
of the structure of the set $\Gamma_{j,c}$. More precisely, it relies on the fact that
\ref{H-gamma} provides the existence of $\delta>0$ and two curves $\gamma_{j,c}^\pm$ such that 
\bqq
\{(t,y^\pm(t)) : t\in (-\delta,\delta)\}\subseteq \Gamma_{j,c}.
\eqq
If $\wh W$ is of class $C^2$ in a neighborhood of the origin 
and $$\alpha_c:=\frac{c^2}{(c_s(W))^2}-1>0,$$
we can use the Morse lemma 
to justify the existence of the curves  $\gamma_{j,c}^\pm$ and to conclude that set $\Gamma_{j,c}$ 
consists of exactly these two curves near
the origin. Therefore the set $\Gamma_{j,c}$ looks like
Figure~\ref{curvas-morse} and condition \ref{H-gamma} is fulfilled.

\begin{figure}[ht]
\begin{center}
\scalebox{0.9}{\includegraphics{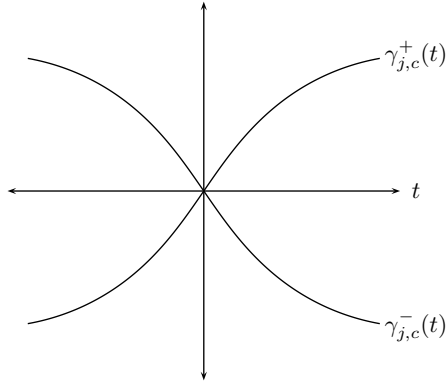}}
\end{center}
\caption{The set $\Gamma_{j,c}$ near the origin for $\wh W$ of class $C^2$.}
\label{curvas-morse}
\end{figure}

\begin{lema}\label{curva}
Assume that \ref{H0} and  \ref{H-reg} hold. Assume also that $\alpha_c>0$. Then, for each $j\in \{2,\dots,N\}$, 
there exist $\delta>0$ and  functions $y^\pm \in C^1((-\delta,\delta))\cap C^2((-\delta,\delta)\setminus\{0\})$
such that
\bq\label{eq-morse}
\Gamma_{j,c}\cap B(0,\delta)=\{(t,y^\pm(t)) : t\in (-\delta,\delta)\}.
\eq
Moreover, 
\bq\label{def-y}
\lim_{t\to 0^+} {y^\pm(t)}/{t}=\pm \sqrt{\alpha_{c}},
\eq
$y^+$ is strictly increasing 
and $y^-$ is strictly decreasing.
In particular,  \ref{H-gamma} is satisfied with $l_{j,c}=\alpha_{c}$.
\end{lema}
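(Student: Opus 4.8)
The plan is to reduce the study of $\Gamma_{j,c}$ near the origin to an application of the Morse lemma. Write $\w(\nu_1,\nu_2):=\w_j(\nu_1,\nu_2)=\wh W(\nu_1 e_1+\nu_2 e_j)$; by \ref{H-reg} this is $C^2$ near $(0,0)$ with $\w(0,0)=\wh W(0)=(c_s(W))^2/2>0$. The defining function of $\Gamma_{j,c}$ is
\bqq
Q(\nu_1,\nu_2):=\abs{\nu}^4+2\w(\nu)\abs{\nu}^2-c^2\nu_1^2=\big(\nu_1^2+\nu_2^2\big)\big(\nu_1^2+\nu_2^2+2\w(\nu)\big)-c^2\nu_1^2.
\eqq
First I would compute the $2$-jet of $Q$ at the origin. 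Since the factor $\abs{\nu}^2(\abs{\nu}^2+2\w(\nu))$ vanishes to second order and equals $2\w(0,0)\abs{\nu}^2+o(\abs{\nu}^2)=(c_s(W))^2\abs{\nu}^2+o(\abs{\nu}^2)$, we get
\bqq
Q(\nu_1,\nu_2)=\big((c_s(W))^2-c^2\big)\nu_1^2+(c_s(W))^2\nu_2^2+o(\abs{\nu}^2)=(c_s(W))^2\big(\nu_2^2-\alpha_c\,\nu_1^2\big)+o(\abs{\nu}^2),
\eqq
so the Hessian of $Q$ at $0$ is nondegenerate and indefinite precisely because $\alpha_c>0$ (and $\nabla Q(0)=0$). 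The Morse lemma then gives a $C^0$ change of coordinates — in fact $C^1$ away from the diagonalizing directions, which is what accounts for the stated regularity $C^1$ on $(-\delta,\delta)$ and $C^2$ off the origin — turning $Q$ into $s^2-t^2$ on a neighborhood of the origin, whose zero set is two transverse lines. Pulling this back, $\Gamma_{j,c}\cap B(0,\delta)$ is the union of two $C^1$ arcs through the origin, tangent to the asymptotic lines $\nu_2=\pm\sqrt{\alpha_c}\,\nu_1$ of the leading quadratic form; solving $Q=0$ for $\nu_2$ near one of these directions via the implicit function theorem (the partial $\ptl_{\nu_2}Q$ is nonzero there for $\nu_1\ne 0$ small) produces the graphs $y^\pm$ with $y^\pm(0)=0$ and, by dividing $Q(t,y^\pm(t))=0$ by $t^2$ and letting $t\to0^+$, the limit $y^\pm(t)/t\to\pm\sqrt{\alpha_c}$, which is \eqref{def-y}. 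Monotonicity of $y^+$ (resp. $y^-$) follows either from the $C^1$ graph structure together with the tangent slope $\pm\sqrt{\alpha_c}\ne0$ near the origin, or directly by differentiating the relation $Q(t,y(t))=0$ and checking the sign of $y'(t)=-\ptl_{\nu_1}Q/\ptl_{\nu_2}Q$ near $0$; I would also note the even case $\gamma^-_{j,c}=-\gamma^+_{j,c}$ when $\wh W$ is even in each component is then automatic from uniqueness of the graphs.

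It remains to identify the constant $\ell_{j,c}$ of \ref{H-gamma}: setting $\gamma^+_{j,c}:=y^+$ on $(0,\delta)$ and $\gamma^-_{j,c}:=y^-$ on $(0,\delta)$ (shrinking $\delta$ so that both are defined, positive resp. negative, and $\Gamma_{j,c}\cap B(0,\delta)$ has measure zero, which holds since it is a union of two $C^1$ curves), the computed limits give
\bqq
\lim_{t\to0^+}\Big(\frac{\gamma^+_{j,c}(t)}{t}\Big)^2=\lim_{t\to0^+}\Big(\frac{\gamma^-_{j,c}(t)}{t}\Big)^2=\alpha_c=:\ell_{j,c},
\eqq
so \ref{H-gamma} holds with $\ell_{j,c}=\alpha_c$. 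The main obstacle is the regularity bookkeeping: the Morse lemma as usually stated gives only a homeomorphism (or a $C^{k-2}$ diffeomorphism if $Q\in C^k$, here $k=2$ so $C^0$), and one has to argue more carefully — using the implicit function theorem applied to $Q$ directly in a cone around each asymptotic direction, where $\ptl_{\nu_2}Q$ does not vanish — to get that each branch is a genuine $C^1$ graph on $(-\delta,\delta)$ and $C^2$ on $(-\delta,\delta)\setminus\{0\}$ (the loss of one derivative at the origin coming from the square-root behavior of the splitting of the two branches where they cross). Everything else — the jet computation, the limit of $y^\pm(t)/t$, the monotonicity — is a routine consequence once the graph structure is in place.
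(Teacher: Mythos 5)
Your proposal follows essentially the same route as the paper: compute the Hessian of the defining function at the origin, observe it is nondegenerate and indefinite precisely when $\alpha_c>0$, and invoke the Morse lemma to conclude that the zero set near $0$ consists of two transversal curves with the claimed asymptotic slopes. The Hessian computation, the limit $y^\pm(t)/t\to\pm\sqrt{\alpha_c}$ by dividing $Q(t,y^\pm(t))=0$ by $t^2$, and the monotonicity argument all agree with the paper.

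The one genuine difference is the treatment of the regularity bottleneck, and here you are in fact \emph{more} careful than the paper's terse proof in flagging it. You correctly note that the ``textbook'' Morse lemma applied to a $C^2$ function only yields a homeomorphism, which is not enough to produce $C^1$ graphs or the derivative asymptotics. The paper sidesteps this by citing a refined version of the Morse lemma (Ostrowski, Theorem~II) that, for $C^2$ functions, provides a $C^1$ change of variables $\Phi$ whose Jacobian tends to the identity at the origin (this is exactly \eqref{der-morse}); from this, parametrizing the two straightened lines $z_2=\pm\sqrt{\alpha_c}z_1$ and pulling back gives $C^1$ curves with the right tangent slopes, and the $C^2$ regularity off the origin comes for free since $\ptl_{\nu_2}R_j\ne0$ there. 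Your alternative --- applying the implicit function theorem directly to $Q$ inside cones around the asymptotic directions where $\ptl_{\nu_2}Q\ne0$, then matching the two branches $C^1$ at the origin using the first-order Taylor expansion of $\grad Q$ --- is a legitimate and arguably more self-contained way to get the same conclusion; it avoids having to know the precise regularity content of the Morse lemma, at the cost of a few extra lines verifying that $Q\ne0$ outside these cones and that $(y^\pm)'(t)=-\ptl_1Q/\ptl_2Q\to\pm\sqrt{\alpha_c}$. One aside in your write-up is misleading: the Morse diffeomorphism is not ``$C^1$ away from the diagonalizing directions and $C^0$ elsewhere'' --- its regularity is uniform; the $C^2$-off-the-origin versus $C^1$-at-the-origin dichotomy in the statement comes from the IFT applied to $Q$ itself (whose second partial in $\nu_2$ vanishes at $0$), not from any directional regularity of the Morse change of coordinates. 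With that clarification, your argument is sound.
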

\begin{proof}
Let us set 
   \bqq
  R_j(\nu):=\abs{\nu}^4+2\w_j(\nu)\abs{\nu}^2-c^2\nu^2_1,\quad \nu=(\nu_1,\nu_2)\in \R^2.
  \eqq
In view of \ref{H-reg},  $R_j\in C^2(B(0,\delta_0))$, for some $\delta_0>0$.  Since 
$\w_j$ is even, we have that $\ptl_ 1\w_j(0,0)=\ptl_ 2\w_j(0,0)=0$. Then we obtain
$R_j(0,0)=0$, $\grad R_j(0,0)=0$,
\bq\label{dem-morse}
\frac{\ptl^2 R_j}{\ptl \nu_1^2}(0,0)=-4\alpha_c \w_j(0,0)<0, \ \ \frac{\ptl^2 R_j}{\ptl \nu_2^2}(0,0)=4\w_j(0,0)>0,\ \ \frac{\ptl^2 R_j}{\ptl \nu_1\ptl\nu_2}(0,0)=0.
\eq
 Therefore by the Morse lemma (see e.g. \cite[Theorem II]{ostrowski})
there exist two neighborhoods of the origin $ U,  V\subset \R^2$
and a local diffeomorphism $\Phi: U\to V$
such that 
\bq\label{eq-morse2}
R_j(\Phi^{-1}(z))= -2\alpha_c \w_j(0,0) z_1^2+2\w_j(0,0) z_2^2, \quad \text{ for all }z=(z_1,z_2)\in V.
\eq
Moreover, denoting $\Phi=(\Phi_1,\Phi_2)$ we have for $1\leq j,k\leq 2$
\bq\label{der-morse}
\frac{\ptl\Phi_j}{\ptl \nu_k }(\nu)\to \delta_{j,k}, \quad \text{ as }\abs{\nu}\to 0.
\eq
From \eqref{eq-morse2} we deduce that near the origin the set of solutions of $R_j=0$ 
is given  by the lines 
$$\{(t,\pm\sqrt{\alpha_c}t) : t\in (-\delta,\delta)\},$$
where we take $\delta>0$ such that the set is contained in $V$. Since $\Phi$ is a diffeomorphism we conclude that 
\bq\label{igual-morse}
\Gamma_{j,c}\cap B(0,\delta)=\{ (x_1^\pm(t),x_2^\pm(t)) : t\in (-\delta,\delta)\},
\eq
where
\begin{align}
\Phi_1(x_1^\pm ,x_2^\pm )&=t,\label{phi-1}\\
\Phi_2 (x_1^\pm ,x_2^\pm )&=\pm \sqrt{\alpha_c}t \label{phi-2}.
\end{align}
Moreover, differentiating relation \eqref{phi-1}
with respect to $t$ and using \eqref{der-morse}, we infer that $(x_1^\pm)'(t)\to 1$
as $t\to 0$. Therefore we can recast \eqref{igual-morse} as in \eqref{eq-morse}
with $y^\pm\in C^1((-\delta,\delta))\cap C^2((-\delta,\delta)\setminus\{0\})$.
Furthermore,  differentiating \eqref{phi-2} and using again \eqref{der-morse}
we conclude that 
$$(y^\pm)'(0)=\pm \sqrt{\alpha_c}.$$
Since $y^\pm\in C^1((-\delta,\delta))$,
taking a possible smaller value $\delta$, this implies \eqref{def-y}
and that  $y^+$ and  $y^-$ are  strictly increasing 
and decreasing on $(-\delta,\delta)$, respectively.
\end{proof}
\section{A Pohozaev identity}\label{sec:pohozaev}
In this section we  establish the following Pohozaev identity.
\begin{prop}\label{pohozaev}
Assume that \ref{H0}--\ref{H-resto} hold.  Then 
 \begin{align}
E(v)&=\intR{\abs{\ptl_1 v}^2}+\frac1{4(2\pi)^N}\intR  \xi_1 \ptl_1 \wh W \abs{\wh \eta}^2\,d\xi\label{poh1},\\
E(v)&=\intR{\abs{\ptl_j v}^2}-\frac{c}{2}\intR{(v_1\ptl_1 v_2-v_2\ptl_1 v_1-\ptl_1(\phi \theta))}+\frac1{4(2\pi)^N}\intR  \xi_j \ptl_j \wh W \abs{\wh \eta}^2\,d\xi,\label{poh2}
\end{align}
for all $j\in \{2,\dots,N\}.$
\end{prop}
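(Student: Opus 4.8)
The plan is to derive the two identities \eqref{poh1} and \eqref{poh2} by testing the equation \eqref{TW} against the dilation field $x\cdot\grad v$, as in the classical Pohozaev argument, but carried out in a way that is rigorous for functions in $\E(\R^N)$ rather than for rapidly decaying ones. First I would fix $k\in\{1,\dots,N\}$ and multiply \eqref{TW} by the conjugate of $x_k\ptl_k v$, take the real part, and integrate over a large ball $B(0,R)$. The term $\langle\Delta v, x_k\ptl_k v\rangle$ integrates by parts to produce $\frac12\abs{\grad v}^2 - \abs{\ptl_k v}^2$ plus boundary terms; the term $\langle ic\ptl_1 v, x_k\ptl_k v\rangle$ produces (after symmetrization) a contribution involving $G_1$ when $k\geq 2$, or vanishes for $k=1$; and the nonlinear term $\langle v(W*\eta), x_k\ptl_k v\rangle = -\tfrac12 (W*\eta)\, x_k\ptl_k\eta$ is the one that carries the nonlocal potential. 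The regularity and decay from Corollary \ref{cor-reg} and Lemma \ref{lemma-G} (in particular $\rho\to1$, $\grad v\to0$, and $\grad v,\eta\in W^{k,p}$) are what justify that all boundary terms on $\ptl B(0,R)$ tend to zero along a suitable sequence $R_n\to\infty$, using that an $L^2$ function composed with the estimate $\liminf_{R\to\infty} R\int_{\ptl B(0,R)}(\abs{\grad v}^2+\eta^2)=0$.

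The heart of the matter is the nonlocal term $-\tfrac12\intR (W*\eta)\,x_k\ptl_k\eta$. Formally one writes $x_k\ptl_k\eta = \ptl_k(x_k\eta) - \eta$ and integrates by parts to move the $x_k\ptl_k$ onto the convolution; using that $W$ is even, $\intR (W*f)\,g = \intR f\,(W*g)$, and one expects to land on something like $\tfrac12\intR \eta\,\bigl(W*\eta + x_k\cdot(\text{something involving }\ptl_k W*\eta)\bigr)$. The cleanest route is to pass to Fourier variables: by Plancherel, $\intR(W*\eta)\,x_k\ptl_k\eta = \frac{1}{(2\pi)^N}\intR \wh W\,\wh\eta\,\overline{\wh{x_k\ptl_k\eta}}$, and $\wh{x_k\ptl_k\eta}(\xi) = i\ptl_k\bigl(\xi_k\wh\eta(\xi)\bigr) = i\wh\eta(\xi) + i\xi_k\ptl_k\wh\eta(\xi)$. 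Expanding, the $\xi_k\ptl_k\wh\eta$ piece gets symmetrized via integration by parts in $\xi$ against $\wh W\,\overline{\wh\eta}$, and the derivative falls partly on $\wh W$ (producing the term $\xi_k\ptl_k\wh W\,\abs{\wh\eta}^2$ that appears in \eqref{poh1}--\eqref{poh2}) and partly back onto $\wh\eta$ (which recombines). This is exactly where hypothesis \ref{H-resto} — that $\xi\mapsto\xi_j\ptl_k\wh W(\xi)$ is bounded and a.e.\ continuous — is needed to make sense of the manipulation, and where the evenness of $W$ (equivalently $\wh W$ real-valued and even) is essential to avoid spurious imaginary contributions.

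I expect the main obstacle to be precisely the justification of these Fourier-space integrations by parts for $\eta$ that only satisfies $\eta\in W^{k,p}(\R^N)$ for $p\geq2$ and $\wh W$ that is merely an $L^\infty$ multiplier with the weak differentiability in \ref{H-resto}: $\wh\eta$ need not decay, $\ptl_k\wh\eta$ is only an $L^2$-type object via $x_k\eta$, and $\ptl_k\wh W$ exists only a.e. The remedy is a truncation/regularization argument — multiply by cutoffs $\chi(\xi/R)$ in frequency and by $\psi(x/R)$ in space, perform all integrations by parts on the regularized quantities where everything is smooth and compactly supported, and then pass to the limit using dominated convergence together with the uniform bound on $\xi_k\ptl_k\wh W$ and the fact that $x_k\eta\in L^2_{\loc}$ with controlled growth. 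The author's footnote crediting a collaboration on Proposition \ref{lema-tecnico} strongly suggests that this technical lemma is invoked here to handle exactly this step; assuming it, the remaining computation is the routine bookkeeping of collecting the boundary-free terms, which reassembles into $E(v)$ on the left and the stated right-hand sides — with the $k=1$ case giving \eqref{poh1} (no $\ptl_1$-current term since $\langle ic\ptl_1 v, x_1\ptl_1 v\rangle$ symmetrizes to a pure divergence) and $k=j\geq2$ giving \eqref{poh2} (with the $\tfrac c2 P(v)$-type term surviving).
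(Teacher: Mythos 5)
Your proposal matches the paper's proof in all essential respects: you correctly identify the dilation-multiplier Pohozaev structure, recognize that the linear terms (the $\langle\Delta v,\cdot\rangle$ and $\langle ic\ptl_1 v,\cdot\rangle$ pieces) are handled by standard truncation arguments (which the paper imports as Lemma~\ref{lema:ultimo}), and correctly pinpoint Proposition~\ref{lema-tecnico} as the technical engine handling the nonlocal term via Fourier-space regularization under hypothesis~\ref{H-resto}. The only superficial difference is that the paper works with the smooth cutoff $\chi_n(x)=\chi(x/n)$ from the outset rather than integrating over a ball $B(0,R)$ and chasing boundary terms, but these are equivalent truncation schemes.
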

Note that by Lemma~\ref{lemma-G}, $G_1=v_1\ptl_1 v_2-v_2\ptl_1 v_1-\ptl_1(\phi \theta)\in L^1(\R^N)$, thus every integral in \eqref{poh1} and \eqref{poh2}
is finite. As mentioned in Section~\ref{intro}, in the case that $W$ is the Dirac delta function
this result is well-known (see \cite{brezis,bethuel2,gravejat,maris-non}). The standard technique is to 
introduce a function $\chi\in C^\infty(\R)$, with $\chi(x)=1$ for $\abs{x}<1$ and $\chi(x)=0$ for $\abs{x}>2$,
and $\chi_n(x):=\chi(x/n)$. Then, multiplying \eqref{TW}  by $x_j\chi_n \ptl_j \bar v$ and taking real part, we are led to 
\begin{equation}\label{TW-1}
\langle ic\ptl_1 v +\Delta v,x_j\chi_n \ptl_j v\rangle-\frac{1}2 (W*\eta) x_j\chi_n \ptl_j \eta=0,  \quad\textrm{ on } \R^N,
\end{equation}
where we have used that
\bqq
\langle v,\ptl_j v \rangle=-\frac12 \ptl_j \eta.
\eqq
 Concerning  \eqref{TW-1}, we recall the following result.
\begin{lema}[\cite{brezis,bethuel2,gravejat,maris-non}]\label{lema:ultimo} Let $\varphi= \varphi_1+\varphi_2 \in \E(\R^N)\cap C^\infty(\R^N)$.
Assume that there exist $R^*>0$ and a smooth real-valued function $\tilde \theta$ defined on $B(0,R^*)^c$, with $\grad \tilde \theta\in L^2(B(0,R^*)^c)$,
such that
\bqq
\abs{\varphi}\geq \frac12\quad \text{ and }\quad \varphi=\abs{\varphi} e^{i\tilde \theta}\quad \text{ on } B(0,R_0)^c.
\eqq
Let  $\tilde \phi\in C^\infty(\R^N)$, such that $\tilde  \phi=0$ on $B(0,2R^*)$ and $\tilde  \phi=1$ on $B(0,3R^*)^c$.
Then for all $j\in \{1,\dots,N\}$, we have
 \begin{align}
&\lim_{n\to\infty} \intR \langle i\ptl_1 \varphi, x_j\chi_n \ptl_j \varphi\rangle=\frac 12(1-\delta_{1,j})  \intR{(\varphi_1\ptl_1 \varphi_2-
\varphi_2\ptl_1 \varphi_1-\ptl_1(\tilde  \phi \tilde \theta))},
\label{eq-po-2}\\
&  \lim_{n\to\infty} \intR \langle \Delta \varphi, x_j\chi_n \ptl_j \varphi \rangle=-\intR \abs{\ptl_j \varphi}^2+\frac12 \intR {\abs{\grad \varphi}}^2\label{eq-po-1},\\
&  \lim_{n\to\infty}-\frac{1}2\intR  x_j\chi_n (1-\abs{\varphi}^2) \ptl_j (1-\abs{\varphi}^2)=\frac{1}4\intR ({1-\abs{\varphi}^2})^2\label{caso-delta}.
 \end{align}
\end{lema}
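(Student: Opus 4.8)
\textbf{Proof strategy for Lemma \ref{lema:ultimo}.} The plan is to establish the three limits \eqref{eq-po-2}, \eqref{eq-po-1}, \eqref{caso-delta} separately, each by splitting $\R^N$ into the bounded region $B(0,R^*)^c$ where the lifting $\varphi=\abs{\varphi}e^{i\tilde\theta}$ is available (and using the complement as a harmless compact piece), integrating by parts, and sending $n\to\infty$ with the dominated convergence theorem. The key structural facts to exploit are: $\chi_n\to 1$ pointwise with $\abs{\grad\chi_n(x)}\leq C/n$ supported on the annulus $n\leq\abs{x}\leq 2n$; $\grad\varphi\in L^2(\R^N)$; $1-\abs{\varphi}^2\in L^2(\R^N)$; and on $B(0,R^*)^c$ one has $\grad\tilde\theta\in L^2$, so $\varphi$ decomposes into a modulus that tends to $1$ and a phase with square-integrable gradient.

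\textbf{Step 1: the Laplacian term \eqref{eq-po-1}.} Here I would integrate by parts twice. Writing $\langle\Delta\varphi,x_j\chi_n\ptl_j\varphi\rangle=\div(\cdots)-\sum_k\langle\ptl_k\varphi,\ptl_k(x_j\chi_n\ptl_j\varphi)\rangle$, the divergence term integrates to zero (as $\grad\varphi\in L^2$ and the boundary terms are controlled), and expanding the remaining sum gives three pieces: one with $\delta_{j,k}\chi_n\abs{\ptl_j\varphi}^2$, one with $x_j\chi_n\ptl_k\varphi\,\ptl_{jk}\varphi=\tfrac12 x_j\chi_n\ptl_j(\abs{\ptl_k\varphi}^2)$ (again integrate by parts to produce $-\tfrac12\chi_n\abs{\grad\varphi}^2$ plus an $O(1/n)$ error from $\grad\chi_n$), and one with $x_j(\ptl_k\chi_n)\ptl_k\varphi\,\ptl_j\varphi$ which is $O(1/n)\cdot\norm{\grad\varphi}_{L^2}^2\to 0$ since $\abs{x(\grad\chi_n)(x)}\leq C$ on the support. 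Passing $n\to\infty$ by dominated convergence ($\abs{\grad\varphi}^2\in L^1$) yields \eqref{eq-po-1}.

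\textbf{Step 2: the potential term \eqref{caso-delta}.} Since $(1-\abs{\varphi}^2)\ptl_j(1-\abs{\varphi}^2)=\tfrac12\ptl_j\big((1-\abs{\varphi}^2)^2\big)$, integration by parts gives $-\tfrac12\intR x_j\chi_n(1-\abs{\varphi}^2)\ptl_j(1-\abs{\varphi}^2)=\tfrac14\intR\big(\chi_n+\tfrac{x_j}{n}(\ptl_j\chi)(x/n)\big)(1-\abs{\varphi}^2)^2$. The correction term is bounded by $\tfrac C4\norm{1-\abs{\varphi}^2}_{L^2}^2$ restricted to $\abs{x}\geq n$, hence $\to 0$, and $\chi_n(1-\abs{\varphi}^2)^2\to(1-\abs{\varphi}^2)^2$ in $L^1$ by dominated convergence, giving \eqref{caso-delta}.

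\textbf{Step 3: the transport term \eqref{eq-po-2}.} This is where the lifting matters and I expect the main obstacle. One has $\langle i\ptl_1\varphi,x_j\chi_n\ptl_j\varphi\rangle=x_j\chi_n(\varphi_1\ptl_1\varphi_2-\varphi_2\ptl_1\varphi_1)$ wait—more precisely $\langle i\psi,\zeta\rangle=\Re(i\psi\bar\zeta)$, so this equals $x_j\chi_n(\varphi_2\ptl_j\varphi_1-\varphi_1\ptl_j\varphi_2)\cdot(\text{sign})$; the point is that the integrand is $x_j\chi_n$ times a divergence-like quantity $j_1:=\varphi_1\ptl_1\varphi_2-\varphi_2\ptl_1\varphi_1$ (for $j=1$) which is not obviously integrable, so one cannot naively pass to the limit. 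The resolution, as in \cite{brezis,bethuel2,gravejat,maris-non}, is to write on $B(0,R^*)^c$ the current $\varphi_1\ptl_j\varphi_2-\varphi_2\ptl_j\varphi_1=\abs{\varphi}^2\ptl_j\tilde\theta=\ptl_j\tilde\theta-(1-\abs{\varphi}^2)\ptl_j\tilde\theta$, and note $(1-\abs{\varphi}^2)\ptl_j\tilde\theta\in L^1$ since both factors are in $L^2$; then subtract and add $\ptl_j(\tilde\phi\tilde\theta)$ to make the combination $\varphi_1\ptl_1\varphi_2-\varphi_2\ptl_1\varphi_1-\ptl_1(\tilde\phi\tilde\theta)$ that appears on the right of \eqref{eq-po-2}, which is globally $L^1$ (this is exactly the mechanism behind Lemma \ref{lemma-G}). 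For the case $j=1$, an integration by parts in $x_1$ turns $\intR x_1\chi_n\ptl_1(\tilde\phi\tilde\theta)$ into a term that cancels, leaving the factor $\tfrac12(1-\delta_{1,j})$; for $j\neq 1$, integration by parts in $x_j$ against the pure-gradient piece $\ptl_j\tilde\theta$ produces the $\tfrac12$ and the $\ptl_1(\tilde\phi\tilde\theta)$ contribution. In all cases the $\grad\chi_n$ errors are $O(1/n)$ times $L^1$ norms and vanish. Carefully tracking these cancellations — in particular checking that the non-integrable pieces genuinely cancel before the limit and that the $\delta_{1,j}$ factor emerges correctly — is the delicate bookkeeping that constitutes the heart of the proof; the rest is dominated convergence.
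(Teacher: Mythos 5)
Your Steps 1 and 2 are correct and standard; Step 2 is in fact the very computation the paper reproduces as motivation (the paper does not prove this lemma itself, it recalls it from the works cited in its statement and only sketches \eqref{caso-delta}). The genuine gap is in Step 3, which you yourself call the heart of the proof but do not carry out, and the pointwise identity you start from there is wrong. One has $\langle i\ptl_1\varphi,\ptl_j\varphi\rangle=\ptl_1\varphi_1\,\ptl_j\varphi_2-\ptl_1\varphi_2\,\ptl_j\varphi_1$: this is a product of two derivatives, hence already in $L^1(\R^N)$ (the only difficulty is the unbounded weight $x_j\chi_n$), and it is \emph{not} $x_j\chi_n$ times a current $\varphi_1\ptl_k\varphi_2-\varphi_2\ptl_k\varphi_1$ with undifferentiated $\varphi$'s, as both of your proposed expressions claim. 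In particular, for $j=1$ the integrand vanishes identically, since $\langle iz,z\rangle=0$ for every $z\in\C$, so that case is trivial; the ``integration by parts in $x_1$ that cancels'' you invoke is not the mechanism, and the factor $1-\delta_{1,j}$ does not emerge the way you describe.

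For $j\neq 1$ the missing idea is the antisymmetric identity
\bqq
2\langle i\ptl_1\varphi,\ptl_j\varphi\rangle=\ptl_1\bigl(\varphi_1\ptl_j\varphi_2-\varphi_2\ptl_j\varphi_1\bigr)-\ptl_j\bigl(\varphi_1\ptl_1\varphi_2-\varphi_2\ptl_1\varphi_1\bigr).
\eqq
Writing $j_k:=\varphi_1\ptl_k\varphi_2-\varphi_2\ptl_k\varphi_1$, integrating against $x_j\chi_n$ and integrating by parts (everything is compactly supported) gives $\intR x_j\chi_n\langle i\ptl_1\varphi,\ptl_j\varphi\rangle=-\tfrac12\intR x_j\ptl_1\chi_n\, j_j+\tfrac12\intR(\chi_n+x_j\ptl_j\chi_n)\, j_1$. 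Since $j_k\notin L^1(\R^N)$ in general, one then substitutes $j_k=G_k+\ptl_k(\tilde\phi\tilde\theta)$ with $G_k:=j_k-\ptl_k(\tilde\phi\tilde\theta)\in L^1(\R^N)$ (the mechanism of Lemma~\ref{lemma-G}, which you correctly identify): the two $\ptl_k(\tilde\phi\tilde\theta)$ contributions cancel \emph{exactly}, because undoing the integrations by parts each equals $\pm\tfrac12\intR x_j\chi_n\,\ptl_1\ptl_j(\tilde\phi\tilde\theta)$ with opposite signs, while the $G$ terms converge by dominated convergence ($|x_j\ptl_k\chi_n|$ bounded with support escaping to infinity) to $\tfrac12\intR G_1$, which is the right-hand side of \eqref{eq-po-2}. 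Without the antisymmetric identity and this explicit cancellation of the non-$L^1$ pure-gradient pieces, your Step 3 asserts rather than proves \eqref{eq-po-2}; as written, the ``delicate bookkeeping'' you defer is precisely the content of the lemma. (A minor quibble in Step 1: the $\grad\chi_n$ error terms are not $O(1/n)\norm{\grad\varphi}_{L^2}^2$; they vanish because they are bounded by a constant times $\int_{\{|x|\geq n\}}\abs{\grad\varphi}^2$, the tail of an $L^1$ function.)
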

Therefore, from  \eqref{TW-1} and Lemma~\ref{lema:ultimo}, Proposition \ref{pohozaev} follows in the case $W=\delta$.
To motivate our approach, let us briefly recall the proof of 
 \eqref{caso-delta}. First, we integrate by parts to obtain
\bqq
\begin{split}
A_n:&= -\frac{1}2\intR  x_j\chi_n (1-\abs{\varphi}^2) \ptl_j (1-\abs{\varphi}^2)\\
&=\frac12 \intR \chi_n(1-\abs{\varphi}^2)^2+
\frac12 \intR {x_j}  \ptl_j \chi_n(1-\abs{\varphi}^2)\ptl_j(1-\abs{\varphi}^2)-A_n.
\end{split}
\eqq
Then, invoking the dominated convergence theorem,
\bqq
A_n=\frac14 \intR \chi_n(1-\abs{\varphi}^2)^2+\frac14 \intR {x_j}  \ptl_j \chi_n(1-\abs{\varphi}^2) \ptl_j(1-\abs{\varphi}^2)\to 
\frac14 \intR (1-\abs{\varphi}^2)^2,
\eqq
as $n\to\infty$. In particular, we see 
that due to a symmetry property, we 
can write $A_n$ in  terms
of integrals to which we can apply 
the dominated convergence theorem.
However, in our nonlocal case we cannot use this trick and we
have to analyze the  integral associated to the potential energy more carefully. 
We rely in particular on the following general result.
 
\begin{prop}\label{lema-tecnico}
 Let  $f\in L^2(\R^N)\cap H_{\loc}^1(\R^N)$ be 
a real-valued function and $W\in \M_{2,2}(\R^N)$.
Assume also that \ref{H0}  and \ref{H-resto} hold. Then, for all $j\in\{1,\dots,N\}$,
\bq\label{limite-tecnico}
  \lim_{n\to\infty}-\frac12 \intR  (W*f) x_j\chi_n \ptl_j f=
 \frac{1}{4} \int_{\R^N} ( W * f ) f - \frac{1}{4(2\pi)^N} \int_{\R^N} {\xi_j \partial_j \wh W}(\xi) \abs{\widehat f(\xi)}^2 d\xi.
\eq
\end{prop}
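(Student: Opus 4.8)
The plan is to compute the limit by splitting $-\tfrac12\intR (W*f)\,x_j\chi_n\,\ptl_j f$ into a ``main'' piece and an ``error'' piece that vanishes as $n\to\infty$. First I would write $x_j\chi_n\ptl_j f = \ptl_j\!\big(x_j\chi_n f\big) - \chi_n f - x_j(\ptl_j\chi_n) f$ and integrate by parts in the first term; since $W$ is even, the convolution operator $f\mapsto W*f$ is self-adjoint on $L^2$, so $\intR (W*f)\,\ptl_j(x_j\chi_n f) = -\intR \ptl_j(W*f)\,x_j\chi_n f = -\intR (W*\ptl_j f)\,x_j\chi_n f$. This reproduces, up to sign, an integral of the same shape but with the roles of the two factors swapped; adding it to the original integral is the nonlocal analogue of the ``symmetry trick'' used for \eqref{caso-delta}. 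Concretely, if I call $A_n := -\tfrac12\intR (W*f)\,x_j\chi_n\,\ptl_j f$, this manipulation should yield something like $2A_n = \tfrac12\intR \chi_n\,(W*f)\,f + \tfrac12\intR x_j(\ptl_j\chi_n)\big[(W*f)f\big] + \text{(a commutator term)}$, where the commutator measures the failure of $W*(x_j g)$ to equal $x_j(W*g)$.

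The key point is that this commutator is governed exactly by the derivatives of $\wh W$. On the Fourier side, multiplication by $x_j$ corresponds to $i\ptl_{\xi_j}$, so $\F\big(W*(x_j g) - x_j(W*g)\big) = i\ptl_{\xi_j}\!\big(\wh W\,\wh g\big) - \wh W\,(i\ptl_{\xi_j}\wh g) = i\,(\ptl_j\wh W)\,\wh g$. Thus the commutator term, after one more integration by parts to move the $\chi_n$-cutoff around and passing $\chi_n\to1$, contributes precisely $-\tfrac{1}{4(2\pi)^N}\intR \xi_j\,\ptl_j\wh W(\xi)\,\abs{\wh f(\xi)}^2\,d\xi$ via Plancherel; hypothesis \ref{H-resto} (boundedness and a.e. continuity of $\xi\mapsto\xi_j\ptl_j\wh W(\xi)$) is exactly what makes this integral absolutely convergent, since $\wh f\in L^2$. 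Meanwhile $\tfrac12\intR \chi_n(W*f)f \to \tfrac12\intR (W*f)f$ by dominated convergence (using $W*f\in L^2$, $f\in L^2$), and the term $\tfrac12\intR x_j(\ptl_j\chi_n)(W*f)f$ vanishes because $|x_j\ptl_j\chi_n|$ is bounded and supported on $\{n\le|x|\le 2n\}$ where $(W*f)f\in L^1$ has vanishing tail. Dividing by $2$ gives the stated identity $\tfrac14\intR(W*f)f - \tfrac{1}{4(2\pi)^N}\intR \xi_j\ptl_j\wh W\,\abs{\wh f}^2$.

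I expect the main obstacle to be making the Fourier-side computation of the commutator rigorous under the weak hypotheses on $W$: here $W$ is only a tempered distribution with $\wh W\in L^\infty$, $f$ is only $L^2\cap H^1_{\loc}$ (not Schwartz, not globally $H^1$), and $\wh W$ is merely differentiable a.e., so one cannot freely differentiate products of distributions. The careful route is to work with the truncated, compactly supported functions $f_n := \chi_n f$, which lie in $H^1$ with compact support so that $\wh{f_n}$ is smooth and $x_j f_n$ has a bona fide transform, establish the commutator identity at the level of these approximations where all integrations by parts and Plancherel applications are legitimate, and only then take $n\to\infty$, controlling each resulting term with dominated convergence and the boundedness in \ref{H-resto}. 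A secondary technical nuisance is checking that $x_j(\ptl_j\chi_n)(W*f)f$ and the analogous cutoff-derivative terms are genuinely negligible; these follow from $|x_j\ptl_j\chi_n|\le C$ together with $(W*f)f\in L^1(\R^N)$ and $\ptl_j\chi_n$ being supported in the annulus $\{n\le|x|\le2n\}$. Once the commutator structure is identified, the rest is bookkeeping.
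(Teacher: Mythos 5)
Your high-level picture --- symmetrize using the evenness of $W$ and attribute the $\partial_j\wh W$-term to a commutator $[W*,x_j]$ with Fourier symbol $i\,\partial_j\wh W$ --- does capture the right structure, and it echoes the paper's own symmetrization of $\iint\partial_j W_m(x-y)\,f(y)\,f(x)\,\big(x_j\chi_n(x)-y_j\chi_n(y)\big)\,dy\,dx$ using the oddness of $\partial_j W_m$. But as written the argument has real gaps. The chain $\intR(W*f)\,\partial_j(x_j\chi_n f)=-\intR\partial_j(W*f)\,x_j\chi_n f=-\intR(W*\partial_j f)\,x_j\chi_n f$ is not available: $f$ is only $L^2\cap H^1_{\loc}$, so $W*\partial_j f$ is undefined, and $W*f\in L^2$ need not be weakly differentiable in any integrable sense ($\xi_j\wh W\wh f$ need not be in $L^1_{\loc}$). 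The paper circumvents exactly this by regularizing $W$ to $W_m:=\F^{-1}(\chi_m\wh W)\in\bigcap_k W^{k,2}(\R^N)$, so that $W_m*f\in C^1\cap L^2$ and $\partial_j(W_m*f)=\partial_j W_m*f$, and only afterwards sending $m\to\infty$. Truncating $f$ instead of $W$ does not repair the merely a.e.\ differentiability of $\wh W$, and reusing $n$ simultaneously as the Pohozaev cutoff already in the integrand and as the truncation parameter for $f$ is incoherent --- you need an independent second parameter, exactly as the paper introduces $m$ alongside $n$.

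The more serious gap is the passage $n\to\infty$. Dominated convergence does dispatch $\tfrac12\intR\chi_n(W*f)f$ and $\tfrac12\intR x_j(\partial_j\chi_n)(W*f)f$, since $(W*f)f\in L^1(\R^N)$, $|x_j\partial_j\chi_n|\leq 2\|\chi'\|_{L^\infty}$, and $\partial_j\chi_n$ is supported in $\{n\leq|x|\leq 2n\}$. But the symmetrized/commutator piece reduces, after Fourier transform, to an expression of the type $\intR\xi_j\partial_j\wh W(\xi)\,\overline{\wh f(\xi)}\,\big(\wh\varphi_n*\wh f\big)(\xi)\,d\xi$ with $\wh\varphi_n$ a rescaled approximate identity, and the family $\wh\varphi_n*\wh f$ admits no pointwise $L^2$-dominant. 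Controlling this limit is precisely the content of the dedicated convergence Lemma~\ref{lema-tec2}, which is the technical core of Proposition~\ref{lema-tecnico}; nothing in your sketch plays its role, and ``the rest is bookkeeping'' understates what is actually the hardest step.
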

The proof of Proposition \ref{lema-tecnico} is  rather technical, 
so that we postpone it. Assuming the result, we now give the proof of the Pohozaev identity.
\begin{proof}[Proof of Proposition~\ref{pohozaev} assuming Proposition~\ref{lema-tecnico}]
 By putting together \eqref{TW-1}--\eqref{eq-po-1} (with $\varphi=v$) and Proposition~\ref{lema-tecnico},
we have for $j\in \{1,\dots,N\}$,
\bqq
\begin{split}
\frac12 \intR {\abs{\grad v}}^2
 +\frac{1}{4} \int_{\R^N} ( W * \eta ) \eta=&
   \intR \abs{\ptl_j v}^2
-(1-\delta_{1,j})\frac c2  \intR{(v_1\ptl_1 v_2-v_2\ptl_1 v_1-\ptl_1(\chi \theta))}\\
 &+\frac{1}{4(2\pi)^N} \int_{\R^N} {\xi_j \partial_j \wh W}(\xi) \abs{\widehat \eta(\xi)}^2 d\xi,
\end{split}
\eqq
which is exactly \eqref{poh1}--\eqref{poh2}.
\end{proof}

We remark that the main problem in order to establish 
the convergence in  \eqref{limite-tecnico} is that $f$ does not  decay  fast enough at infinity.
Indeed, let us suppose that $x_jf, x_j\ptl_j f   \in L^2(\R^N)$.
Then by the dominated convergence theorem and the Plancherel identity we have
\bqq
B_n:=-\frac12 \intR  (W*f) x_j\chi_n \ptl_j f\to 
-\frac{1}{2(2\pi)^N}\intR \wh W{\overline{{\widehat{f}}}}\, \wh{x_j  \ptl_j f},\quad \text{ as } n\to\infty.
\eqq
Using  \eqref{identidad-fourier}, we conclude that
\begin{align}
\lim_{n\to\infty}B_n&=
\frac{1}{2(2\pi)^N}\intR  \wh W \abs{\wh f}^2+\frac{1}{2(2\pi)^N}\intR \wh W \xi_j \overline{\wh f}\ptl_j \wh f\label{justify}\\
&=\frac14 \intR (W*f)f-\frac{1}{4(2\pi)^N}\intR \xi_j \ptl_j \wh W\abs{\wh f}^2,\nonumber
\end{align}
where we have used the Plancherel identity, integration by parts and that  $\ptl_j \wh f\in L^{2}(\R^N)$.
This yields \eqref{limite-tecnico}, but only under these more restrictive assumptions. 
If we only have that $f\in L^2(\R^N)\cap H_{\loc}^1(\R^N)$, we can neither invoke the dominated convergence theorem
nor justify that the second integral in the r.h.s. of \eqref{justify} is finite. Therefore, to deal 
with the limit $n\to \infty$ in Proposition~\ref{lema-tecnico}, we first establish the following lemma.
\begin{lema}\label{lema-tec2}
Let $g\in L^2(\R^N)$ and $F \in L^\infty(\R^N\times \R^N)$. Assume also that 
$F(\cdot ,0)\in L^\infty(\R^N)$ and that
\bq\label{convergencia-en-cero}
F(\xi ,r_n)\to F(\xi,0), \ \text{ as }\abs{r_n}\to 0, \ \text{for a.a. }\xi\in \R^N.
\eq
For  $\varphi\in C_0^\infty(\R^N)$, we set 
 \bq\label{hyp-phi}
\widehat \varphi_n(\xi) := n^N\wh \varphi(n\xi)\ \text{ and } \ \Psi_n(\xi):=\int_{\R^N} F(\xi, r)g(\xi - r) \widehat \varphi_n(r)\,dr,
\eq
for a.a. $\xi \in \R^N$. Then 
\bq\label{claim}
 \Psi_n\to (2\pi)^NF(\cdot,0)g(\cdot)\varphi(0),\quad \text{in } L^2(\R^N), \ \text {as } n\to\infty.
\eq
\end{lema}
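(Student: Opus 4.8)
The plan is to exploit that $\widehat\varphi_n$ acts as an approximate identity of total mass $(2\pi)^N\varphi(0)$. First I would record that, since $\varphi\in C_0^\infty(\R^N)$, its Fourier transform $\widehat\varphi$ is a Schwartz function, hence $\widehat\varphi\in L^1(\R^N)$; by the Fourier inversion formula (with the convention fixed in the paper) and the change of variables $s=nr$ one gets $\int_{\R^N}\widehat\varphi_n(r)\,dr=\int_{\R^N}\widehat\varphi(s)\,ds=(2\pi)^N\varphi(0)$ for every $n$, together with the uniform bound $\|\widehat\varphi_n\|_{L^1(\R^N)}=\|\widehat\varphi\|_{L^1(\R^N)}$. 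The natural splitting is then
\begin{equation*}
\Psi_n(\xi)=h_n(\xi)+F(\xi,0)\,(g*\widehat\varphi_n)(\xi),\qquad h_n(\xi):=\int_{\R^N}\bigl(F(\xi,r)-F(\xi,0)\bigr)g(\xi-r)\widehat\varphi_n(r)\,dr,
\end{equation*}
so it suffices to prove $g*\widehat\varphi_n\to(2\pi)^N\varphi(0)g$ in $L^2(\R^N)$ and $h_n\to0$ in $L^2(\R^N)$; multiplying the first limit by $F(\cdot,0)\in L^\infty(\R^N)$ and adding the second then gives \eqref{claim}.

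For the term $g*\widehat\varphi_n$, I would write
\begin{equation*}
(g*\widehat\varphi_n)(\xi)-(2\pi)^N\varphi(0)g(\xi)=\int_{\R^N}\bigl(g(\xi-r)-g(\xi)\bigr)\widehat\varphi_n(r)\,dr,
\end{equation*}
substitute $r=s/n$ (which replaces $\widehat\varphi_n(r)\,dr$ by $\widehat\varphi(s)\,ds$), and apply Minkowski's integral inequality to bound the $L^2(\R^N)$ norm by $\int_{\R^N}\|g(\cdot-s/n)-g\|_{L^2(\R^N)}\,|\widehat\varphi(s)|\,ds$; continuity of translations in $L^2$ makes the integrand tend to $0$ pointwise in $s$, and it is dominated by $2\|g\|_{L^2(\R^N)}|\widehat\varphi(s)|\in L^1(\R^N)$, so the dominated convergence theorem applies. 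The same substitution turns $h_n(\xi)$ into $\int_{\R^N}\bigl(F(\xi,s/n)-F(\xi,0)\bigr)g(\xi-s/n)\widehat\varphi(s)\,ds$, and Minkowski's integral inequality bounds $\|h_n\|_{L^2(\R^N)}$ by $\int_{\R^N}|\widehat\varphi(s)|\,\Phi_n(s)\,ds$ with $\Phi_n(s):=\bigl\|\bigl(F(\cdot,s/n)-F(\cdot,0)\bigr)g(\cdot-s/n)\bigr\|_{L^2(\R^N)}\le 2\|F\|_{L^\infty}\|g\|_{L^2(\R^N)}$.

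The crux is to show $\Phi_n(s)\to0$ as $n\to\infty$ for each fixed $s$; once this is known, the dominated convergence theorem with majorant $2\|F\|_{L^\infty}\|g\|_{L^2(\R^N)}|\widehat\varphi(s)|$ finishes the proof. The difficulty is that the only information on $F$ is the pointwise-in-$\xi$ continuity at $r=0$ from \eqref{convergencia-en-cero}, which is not uniform in $\xi$, so one cannot control $F(\cdot,s/n)-F(\cdot,0)$ in $L^\infty$. I would get around this by the splitting $g(\xi-s/n)=\bigl(g(\xi-s/n)-g(\xi)\bigr)+g(\xi)$: for the first piece one uses $|F(\xi,s/n)-F(\xi,0)|\le2\|F\|_{L^\infty}$ together with $\|g(\cdot-s/n)-g\|_{L^2(\R^N)}\to0$; for the second piece, $|F(\xi,s/n)-F(\xi,0)|^2|g(\xi)|^2\to0$ for a.a.\ $\xi$ by \eqref{convergencia-en-cero} applied with $r_n=s/n\to0$, and it is dominated by $4\|F\|_{L^\infty}^2|g(\xi)|^2\in L^1(\R^N)$, so its $L^2(d\xi)$ norm tends to $0$ by dominated convergence. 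Routine joint-measurability of $(\xi,s)\mapsto\bigl(F(\xi,s/n)-F(\xi,0)\bigr)g(\xi-s/n)\widehat\varphi(s)$ (needed to legitimately apply Minkowski's integral inequality and Fubini) is all that remains; I expect this $g(\xi)$-peeling trick to be the only real obstacle, everything else being standard approximate-identity bookkeeping.
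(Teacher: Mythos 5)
Your proof is correct, but it is organized differently from the paper's. The paper telescopes $F(\xi,r)g(\xi-r)-F(\xi,0)g(\xi)=F(\xi,r)\bigl(g(\xi-r)-g(\xi)\bigr)+\bigl(F(\xi,r)-F(\xi,0)\bigr)g(\xi)$, so that in the $F$-term the remaining $g$-factor is $g(\xi)$; then for that term it applies the dominated convergence theorem pointwise in $\xi$ and once more for the $L^2$-norm, no Minkowski inequality being needed. You instead factor out $F(\cdot,0)$, which leaves $g(\xi-r)$ inside $h_n$, and that is precisely why you need the additional peeling $g(\cdot-s/n)=(g(\cdot-s/n)-g)+g$ before \eqref{convergencia-en-cero} becomes usable; the paper's decomposition sidesteps that extra step. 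Conversely, your global change of variables $r=s/n$ followed by Minkowski's integral inequality and dominated convergence replaces the paper's splitting of the convolution into $B(0,1/\sqrt n)$ and its complement, which relies on the quantitative decay estimate $|\widehat\varphi_n(\xi)|\le n^{N-2l}\|\Delta^l\varphi\|_{L^1}/|\xi|^{2l}$ obtained by integration by parts; your route avoids that estimate entirely. The paper's near/far split gives explicit rates on part of the error, while your approach is lighter on bookkeeping; both are sound, and the joint-measurability point you flag is indeed covered by the standing assumption $F\in L^\infty(\R^N\times\R^N)$ together with $F(\cdot,0)\in L^\infty(\R^N)$.
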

\begin{proof}
Let 
$$\Psi(\xi):=(2\pi)^NF(\xi,0)g(\xi)\varphi(0),\quad \text{for a.a. }\xi\in \R^N.$$
We  notice that by \eqref{hyp-phi}
\begin{equation}
\int_{\R^N} \widehat \varphi_n(r) dr =\int_{\R^N} \widehat{\varphi}(r) dr = (2\pi)^N\varphi(0),
\end{equation}
so that
$${\Psi_n}(\xi) - {\Psi}(\xi) = 
\int_{\R^N}  (F(\xi,r) g(\xi - r) - F(\xi,0)g(\xi))\widehat \varphi_n(r)  dr.$$
Then
\begin{equation}
\label{H}
\begin{split}
|{\Psi_n}(\xi) - {\Psi}(\xi)|  
\leq &\| F \|_{L^\infty(\R^{2N})} \int_{\R^{N}} |g(\xi - r) - g(\xi)| |\widehat \varphi_n(r)| dr\\
& + |g(\xi)| \int_{\R^N}  | F(\xi,r) - F(\xi,0)| |\widehat \varphi_n (r)| dr.
\end{split}
\end{equation}
On the other hand,  using \eqref{hyp-phi} and integrating by parts, we are led to
\begin{align*}
\abs{\widehat \varphi_n(\xi) }=& {{n^{N}} \bigg| \int_{\R^N} \varphi(y) e^{- i n \xi.y} dy} \bigg|\\
 =& \frac{n^{N  - 2 l}}{ |\xi|^{2 l}} \bigg| \int_{\R^N} \Delta^l \varphi(y) e^{- i n \xi.y} dy \bigg| 
 \leq \frac{n^{N  - 2 l}}{|\xi|^{2 l}} \| \Delta^l \varphi \|_{L^1(\R^N)},
\end{align*}
for any $l \in \N$ and any $\xi \neq 0$. 
Invoking this estimate for  $l = N$ and the Minkowski integral  inequality, we get
\begin{equation}
\label{lim1}
\begin{split}
 \bigg\| \int_{B(0, 1/\sqrt{n})^c} |g(\xi - r) - g(\xi)| |\widehat \varphi_n(r)| dr \bigg\|_{L^2(\R^N)} 
&  \leq  2\norm{g}_{L^2(\R^N)} \norm{\wh \varphi_n}_{L^1(B(0, 1/\sqrt{n})^c)}\\
  &\leq \frac{C(N,\varphi)}{n^{N/2}} \| g \|_{L^2(\R^N)}. 
\end{split}
\end{equation}
Similarly, we obtain
\begin{equation}
\label{lim2}
\bigg\| |g(\xi)| \int_{B(0, 1/\sqrt{n})^c}  |  F(\xi,r) - F(\xi,0)|  |\widehat \varphi_n(r)| dr \bigg\|_{L^2(\R^N)}
 \leq \frac{C(N,\varphi)}{{n^{N/2}}}  \| F \|_{L^\infty(\R^{2N})} \| g \|_{L^2(\R^N)}.
\end{equation}
On the other hand, using again the Minkowski integral inequality and \eqref{hyp-phi},
\begin{equation*}
\begin{split}
\bigg\| \int_{B(0, 1/\sqrt{n})} |g(\xi - r) - g(\xi)| &|\widehat \varphi_n(r)| dr \bigg\|_{L^2(\R^N)}\\
&\leq \Big\| \|g( \cdot-r) - g \|_{L^2(\R^N)}  |\widehat \varphi_n(r)| \Big\|_{L^1(B(0, 1/\sqrt{n}))} \\
&\leq \sup_{|y| \leq 1/\sqrt{n}} \| g(\cdot-y ) - g \|_{L^2(\R^N)}
 \|\widehat \varphi_n \|_{L^1(B(0, 1/\sqrt{n}))}  \\
&\leq  \sup_{|y| \leq 1/\sqrt{n}} \| g(\cdot-y ) - g \|_{L^2(\R^N)}\norm{\wh \varphi}_{L^1(\R^N)}.
\end{split}
\end{equation*}
Since ${g} \in L^2(\R^N)$, we know that
$$\sup_{|y| \leq h} \| g(\cdot - y) - g \|_{L^2(\R^N)} \to 0, \ {\rm as} \ h \to 0,$$
so that
\begin{equation}
\label{lim3}
\bigg\| \int_{B(0, 1/\sqrt{n})} |g(\xi - r) - g(\xi)| |\widehat \varphi_n(r)| dr \bigg\|_{L^2(\R^N)} \to 0, \ {\rm as} \ n \to + \infty.
\end{equation}
We now turn to the second term in the r.h.s. of \eqref{H}. By a change of  variables, we get that it is equal to
\bq\label{jesaispas}
|g(\xi)| \int_{B(0, \sqrt{n})}  |F(\xi,r/n) - F(\xi,0)|  |\widehat{\varphi}(r)| dr.
\eq
Since  $\wh \varphi\in L^1(\R^N)$ and 
$$ |F(\xi,r/n) - F(\xi)| |\widehat{\varphi}(r)|  \leq 2 \| F \|_{L^\infty(\R^{2N})} |\wh \varphi(r)|,$$
we can deduce from \eqref{convergencia-en-cero} and  the dominated convergence theorem that
$$\int_{B(0, \sqrt{n})} |F(\xi, r /n) - F(\xi)|  |\widehat{\varphi}(r)| dr \to 0, \ {\rm as} \ n \to + \infty,$$
for a.a. $\xi \in \R^N$. On the other hand,
\begin{align*}
 |g(\xi)| \int_{B(0, \sqrt{n})} |F(\xi,r/n) - F(\xi)| |\widehat \varphi (r)| dr\leq  2 \| F \|_{L^\infty(\R^{2N})} \| \wh \varphi \|_{L^1(\R^N)} |g(\xi)|,
\end{align*}
Therefore, again by the dominated convergence theorem,
$$\bigg\| |g(\xi)| \int_{B(0, \sqrt{n})} |F(\xi,r/n) - F(\xi)|  |\widehat{\varphi}(r)| dr \bigg\|_{L^2(\R^N)} \to 0, \ {\rm as} \ n \to + \infty.$$
By combining with \eqref{H}--\eqref{jesaispas}, we conclude \eqref{claim}, 
which finishes the proof of Lemma~\ref{lema-tec2}.
\end{proof}
\begin{proof}[Proof of Proposition \ref{lema-tecnico}]
Setting $W_m=\F^{-1}(\chi_m \wh W)=\F^{-1}(\chi_m)*W$, we have that $W_m$ is even, $W_m \in C^\infty(\R^N)$,
\bq\label{Wm-conv}
 \wh W_m\to \wh W, \ \grad \wh W_m\to \grad \wh W  \text{ a.e. and }\ W_m*g\to W*g \text{ in }L^2(\R^N),
\eq
for all $g\in L^2(\R^N)$, as ${m\to\infty}$.
Therefore
\bq\label{def-In}
 I_{n,m}:=-\frac12 \intR (W_m*{f})x_j  \chi_n\ptl_j{f}\sublim_{m\to\infty} I_n:=-\frac12 \intR  \chi_n(W*{f}){x_j}\ptl_j{f}.
\eq
Moreover, since the Fourier transform of all derivatives of $W_m$ have compact support, they are bounded in $L^2(\R^N)$.
Then, by the Plancherel theorem,  we conclude that
\bq\label{aprox-W}
W_m\in W^{k,2}(\R^N),\quad \text{for all }k\in \N.\eq
 In particular, this implies that $W_m*{f}$ belongs to $C^1(\R^N)\cap L^2(\R^N)$,
with $$\ptl_j(W_m*{f})=\ptl_j W_m*{f}.$$
Thus, integrating by parts,  we have that 
\bq\label{Inm}
I_{n,m}=P_{n,m}+Q_{n,m},
\eq where
\begin{align*}
P_{n,m}=\frac12\intR (\ptl_j W_m*{f})x_j \chi_n{f}\ \text{ and } \ Q_{n,m}=\frac12\intR (W_m*{f})(\chi_n+x_j\ptl_j \chi_n){f}.
\end{align*}
By \eqref{Wm-conv}, 
 \bq\label{Qnm}\lim_{m\to\infty}{Q_{n,m}}=\frac12 \intR (W*{f})(\chi_n+x_j\ptl_j \chi_n){f}.\eq
Since $\abs{x_j\ptl_j\chi (x)}\leq 2\norm{\chi'}_{L^\infty(\R)}$, by the dominated convergence theorem, 
\bqq
\lim_{n\to\infty} \intR (W*{f})(\chi_n+x_j\ptl_j \chi_n){f}= \intR (W*{f}){f}.
\eqq
On the other hand, by the Cauchy--Schwarz inequality, 
\bqq
\intR{\abs{\ptl_j W_m(x-y){f}(y)}dy}\leq \norm{\ptl_j W_m}_{L^2(\R^N)}\norm{{f}}_{L^2(\R^N)},  \quad x\in \R^N,
\eqq
so that 
\bq\label{fubini}
 \intR \intR \abs{\ptl_j W_m(x-y){f}(y)x_j{f}(x)\chi_n(x)}dy
dx\leq 
 2n\norm{\ptl_j W_m}_{L^2(\R^N)}\norm{{f}}^2_{L^2(\R^N)}\norm{\chi_n}_{L^2(\R^N)}.
\eq
Since $W_m$ is an even  function, $\ptl W_m$ is odd. Then, by \eqref{fubini} we can
use  the Fubini theorem to deduce that 
\bq\label{identity-Pnm}
 P_{n,m}= \frac{1}{4} \int_{\R^N} \int_{\R^N} \partial_j W_m(x - y) {f}(y) {f}(x) \big( x_j \chi_n(x) - y_j \chi_n(y) \big) dy dx,
\eq
Let us denote
\begin{equation}
\label{def:Gf} 
G_{n,m}(x):= \int_{\R^N} \partial_j W_m(x - y) {f}(y) \big( x_j \chi_n(x) - y_j \chi_n(y) \big) dy,
\end{equation}
for a.a. $x \in \R^N$. Arguing as before, using the Young inequality and \eqref{aprox-W},  we have 
\begin{align*}
\norm{G_{n,m}}_{L^1(\R^N)}&\leq \norm{\ptl_j W_m}_{L^2(\R^N)}\norm{{f}}_{L^2(\R^N)}\norm{x_j \chi_n}_{L^1(\R^N)}+
\norm{\ptl_j W_m}_{L^\infty(\R^N)}\norm{{f} x_j \chi_n}_{L^1(\R^N)},\\
\norm{G_{n,m}}_{L^2(\R^N)}&\leq \norm{\ptl_j W_m}_{L^2(\R^N)}\norm{{f}}_{L^2(\R^N)}\norm{x_j \chi_n}_{L^2(\R^N)}+
\norm{\ptl_j W_m}_{L^2(\R^N)}\norm{{f} x_j \chi_n}_{L^1(\R^N)}.
\end{align*}
Thus $G_{n,m}\in L^1(\R^N)\cap L^2(\R^N)$. Moreover, since the function $x \mapsto x_j \chi_n(x)$ is smooth on $\R^N$, we can write
$$x_j \chi_n(x) - y_j \chi_n(y) = \sum_{k = 1}^N (x_k - y_k) \theta_k(y, x - y),$$
where
$$\theta_k(y, z) :=\int_0^1 \Big( \delta_{j, k} \chi_n(y + t z) + \big( y_j + t z_j \big) \partial_k \chi_n(y + t z) \Big) dt.$$
Therefore, the function $G_{n,m}$ may be written almost everywhere as
$$G_{n,m}(x) = \sum_{k = 1}^N \int_{\R^N} (x_k - y_k) \partial_j W_m(x - y) {f}(y) \theta_k(y, x - y) dy,$$
so that its Fourier transform is equal to
\begin{align*}
\widehat G_{n,m}(p) & =  \sum_{k = 1}^N \int_{\R^N} \int_{\R^N}(x_k - y_k) \partial_j W_m(x - y) {f}(y) \theta_k(y, x - y) e^{- i p.x} dy dx\\
& =  \sum_{k = 1}^N \int_{\R^N} \int_{\R^N} z_k \partial_j W_m(z) {f}(y) \theta_k(y, z) e^{- i p.(y + z)} dy dz\\
& =  \frac{1}{(2\pi)^N}\sum_{k = 1}^N \int_{\R^N} \int_{\R^N} z_k \partial_j W_m(z) \widehat{{f}}(p - r) \tilde{\theta}_k(r, z) e^{- i p.z} dr dz,
\end{align*}
where
\begin{align*}
\tilde{\theta}_k(r, z)  :=& \int_{\R^N} \theta_k(y, z) e^{- i r.y} dy\\
 =&  \int_{\R^N} \bigg( \int_0^1 \Big( \delta_{j, k} \chi_n(y + t z) + \big( y_j + t z_j \big) \partial_k \chi_n(y + t z) \Big) dt \bigg) \theta_k(y, z) e^{- i r.y} dy\\
 =& \int_0^1 e^{i t r.z} \Big( \delta_{j, k} \widehat \chi_n(r) + \widehat{y_j \partial_k \chi_n}(r) \Big) dt.
\end{align*}
Hence, we are led to
\begin{equation*}
\wh G_{n,m}(p) =\frac{1}{(2\pi)^N}
 \sum_{k = 1}^N \int_{\R^N} \int_0^1 \widehat{z_k \partial_j W_m}(p - r t) \widehat{{f}}(p - r)
 \Big( \delta_{j, k} \widehat{\chi_n}(r) + \widehat{y_j \partial_k \chi_n}(r) \Big)\,dt\,dr.
\end{equation*}
At this stage, we note that by \eqref{Wm-conv} and \eqref{identidad-fourier},
$$\widehat{z_k \partial_j W_m}(p) \to \widehat{z_k \partial_j W}(p) =- p_j \partial_k \widehat W(p) - \delta_{k,j}\widehat W(p) \ \ {\rm a.e}, \ {\rm as} \ m \to + \infty,$$
whereas
\begin{equation*}
|\widehat{z_k \partial_j W_m}(p)| \leq \Big( 1 + 2 \| \chi' \|_{L^\infty(\R)} \Big) \| \widehat{W} \|_{L^\infty(\R^N)} + \| p_j \partial_k \widehat{W} \|_{L^\infty(\R^N)},
\end{equation*}
for a.a. $p \in \R^N$. Invoking the dominated convergence theorem, we deduce that
$$\widehat G_{n,m}(p) \to \widehat G_n(p), \quad \text{as }m \to + \infty, $$
for a.a. $p\in \R^N$, where 
$$\widehat G_n(p): = \frac{1}{(2\pi)^N}\sum_{k = 1}^N \int_{\R^N} \int_0^1 \widehat{z_k \partial_j W}(p - r t) \widehat{{f}}(p - r) 
\Big( \delta_{j, k} \widehat\chi_n(r) + \widehat{y_j \partial_k \chi_n}(r) \Big)\,dt\,dr.$$
Moreover, since
\begin{multline*}
\big| \widehat{G_{n,m}}(p) \big| \leq  \frac{1}{(2\pi)^N}\sum_{k = 1}^N \Big( \big( 1 + 2 \| \chi' \|_{L^\infty(\R)} \big) \| \widehat{W} \|_{L^\infty(\R^N)} + \| p_j \partial_j \widehat{W} \|_{L^\infty(\R^N)} \Big) \times\\
 \times \int_{\R^N} |\widehat{{f}}(p - r)| \Big| \delta_{j, k} \widehat{\chi_n}(r) + \widehat{y_j \partial_k \chi_n}(r) \Big| dr,
\end{multline*}
it follows again from the dominated convergence theorem that
$$\widehat G_{n,m} \to \widehat G_n \ {\rm in} \ L^2(\R^N), \ {\rm as} \ m \to + \infty.$$
Hence, recalling \eqref{identity-Pnm} and 
\eqref{def:Gf}, we are led to
\begin{equation}
\label{conv:K}
P_{n,m} \to P_n := \frac{1}{4} \int_{\R^N} G_n(x) {f}(x) dx, \ {\rm as} \ m \to + \infty.
\end{equation}
Finally, since 
\begin{align*}
\widehat \chi_n(p)&={n^N}\int_{\R^N} \chi_1(y)e^{- i n p.y} dy=n^N \wh \chi_1(p),\\
\widehat{y_j \partial_k \chi_n}(p)&= {n^N}\int_{\R^N} y_j \partial_k \chi_1(y)e^{- i n p.y} dy=n^N  \wh{y_j\ptl_k \chi_1}(np),
 \end{align*}
$\chi_1=1$ and $\partial_k \chi_1=0$ on $B(0,1)$, applying Lemma~\ref{lema-tec2} 
with
\bqq
\varphi=\delta_{j,k}\chi_1+{y_j \partial_k \chi_1},\quad F(p,r)=\int_0^1 \widehat{z_k \partial_j W}(p - r t)\,dt,\quad g=\wh f,  \
\eqq
we conclude that 
\bq\label{G_}
\wh G_n\to \widehat{z_j \partial_j W} \widehat{{f}} \quad \text{in } L^2(\R^N), \quad \text{as }n\to\infty.
\eq
Therefore, in view of \eqref{conv:K}, \eqref{G_} and the Plancherel identity, we have
$$P_n \to \frac{1}{4(2\pi)^N} \int_{\R^N} \widehat{z_j \partial_j W}(p) |\wh {f}(p)|^2 dp, \ {\rm as} \ n \to + \infty.$$
By combining with \eqref{identidad-fourier}, \eqref{def-In}, \eqref{Inm}, \eqref{Qnm} and \eqref{conv:K}, we obtain \eqref{limite-tecnico}.
\end{proof}
\section{Proof of the main results}\label{main}
We are now in position to provide the proofs of the results stated in Subsection~\ref{statement}.
\begin{proof}[Proof of Theorem \ref{teorema}]
For $j\in \{1,\dots,N\}$, let us  introduce the notation
\begin{gather*}
\K_j:=\frac12\intR \abs{\ptl_j v}^2, \quad \K:=\sum_{j=1}^N{\K_j}, \quad \RR_j:=\frac1{4(2\pi)^N} \intR \xi_j\ptl_j \wh W \abs{\wh \eta}^2, \\
      \PP:=\intR{(v_1\ptl_1 v_2-v_2\ptl_1 v_1-\ptl_1(\chi \theta))}, \quad \U:=\frac14 \intR (W*\eta)\eta.  
 \end{gather*}  
In this way  
 \bq\label{k-u}
 E(v)=\K+\U
 \eq
 and Propositions~\ref{prop-fourier} and \ref{pohozaev} read 
\begin{align}
\K +2\U&=-\frac{c\,\ell_{c}}{2(1+\ell_{c})}\PP\label{dem-1},\\
\K+\U&=2\K_1+\RR_1,\label{dem-2}\\
\K+\U&=2\K_j-\frac{c}2\PP+\RR_j,\label{dem-3} 
\end{align}
for all $j\in \{2,\dots,N\}$.
From \eqref{dem-1} and \eqref{dem-3}, we obtain
\bq\label{dem-4}
(1+2\ell_{c})\K_j+\sum_{\substack{k=1\\ k\neq j}}^N \K_k+(\ell_{c}+2)\U=-\ell_{c}\RR_j,  \quad j\in \{2,\dots,N\}.
\eq
Therefore, we can write \eqref{k-u}, \eqref{dem-2} and \eqref{dem-4} as the linear system $Az=b$, with 
$$z=(\K_1,\K_2,\dots, \K_n,\U),\quad b=(\RR_1,-\ell_{c}\RR_2,\dots, -\ell_{c}\RR_N,E(v))$$
 and $A\in \R^{N+1\times N+1}$ given by
\bqq
A_{i,j}=\begin{cases}
         -1, &\textup{ if } i=j=1,\\
         2+\ell_{c}, &\textup{ if } j=N+1,\ 1<i<N+1,\\
1+2\ell_{c}, &\textup{ if } i=j, \ i\neq 1,\\
1, &\textup{otherwise}.
        \end{cases}
\eqq
Let $\sigma=(\sigma_1,\sigma_2,\dots,\sigma_N,-1)$. If $K(v)=0$, $v$ is constant. 
 Therefore we suppose that $K(v)>0$.  Then  using \eqref{sigma-1}, 
\bq\label{b}
  \begin{split}
  b^T \sigma &=\sigma_1\RR_1-\ell_c\sum_{k=2}^N  \sigma_k \RR_k - E(v)\\
  &=\frac1{4(2\pi)^N} \intR \abs{\wh \eta(\xi)}^2 \left( \sigma_1\xi_1 \ptl_1\wh W(\xi)-\ell_c 
  \sum_{k=2}^N \sigma_k \xi_k \ptl_k\wh W(\xi) - \wh W(\xi) \right)d\xi- K(v)\\
&\leq - K(v)<0.
 \end{split}
\eq
On the other hand, 
\bqq
(A^T\sigma)_j=\begin{cases}
                   -\sigma_1+\sum_{k=2}^N\sigma_k-1,\quad &\text{if }j=1,\\
                   \sigma_1+\sum_{k=2}^N\sigma_k+2\ell_c \sigma_j -1,\quad &\text{if }2\leq j\leq N,\\
                   \sigma_1+(\ell_c+2)\sum_{k=2}^N\sigma_k-1,\quad &\text{if }2\leq j= N+1.\\
                  \end{cases}
\eqq
Consequently, by \eqref{sigma-2},  $A^T \sigma\geq 0$.
However, since $z\geq 0$, this inequality together with  \eqref{b}
contradict Farkas' Lemma.
\end{proof}
\begin{proof}[Proof of Theorem \ref{teo1}]
It is an immediate consequence of Theorem \ref{teorema} and Lemma \ref{curva}.
\end{proof}
\begin{proof}[Proof of Theorem \ref{teo-c-0}]
Using the notation of the proof of Theorem~\ref{teorema}, by \eqref{der-c-0}
and Proposition \ref{pohozaev} we conclude that
\bqq
\K+\U\leq 2\K_j,\quad \text{ for all }j\in\{1,\dots,N\}.
\eqq 
Thus, summing over $j$,
 \bq\label{dem-c-0}
 \U \leq\frac{2-N}{N}\K.
 \eq
Since $N\geq 2$, $\K\geq 0$ and $\U\geq 0$, inequality
\eqref{dem-c-0} implies that $\U=0$ and therefore $v$ 
is constant.
\end{proof}

\begin{proof}[Proof of Corollary \ref{cor-teo1}]
Let us take $\sigma_1=-1$ and $\bar \sigma:=\sigma_2=\dots=\sigma_N>0$.
In order to fulfill \eqref{sigma-2-2}, we finally fix
\bqq
\bar \sigma=\max\left\{\frac{2}{(N-1)(\alpha_c+2)}, \frac{2}{N-1+\alpha_c}\right\}.
\eqq
Then $\alpha_c \bar \sigma \leq \max\{1,2/(N-1)\}$, so that 
\bqq
\wh W(\xi)+\alpha_c  \sum_{k=2}^N\sigma_k  \abs{\xi_k\ptl_k \wh W(\xi)} 
\geq 
 \wh W(\xi)-\max\left\{1,\frac2{N-1}\right\} \sum_{k=2}^N \abs{\xi_k\ptl_k \wh W(\xi)}- \abs{\xi_1\ptl_1 \wh W(\xi)}.
\eqq
Therefore the conclusion follows from \eqref{hyp-cor} and Theorem \ref{teo1}.
\end{proof}
\begin{proof}[Proof of Corollary \ref{cor-teo}]
Taking  $\sigma_1=0$ and $\sigma_2=\dots=\sigma_N=1/(N-1)$, we have that \eqref{sigma-2-2}
is satisfied. Let $$m:=\inf_{\xi\in \R^N} \frac{(N-1)\wh W(\xi)}{\sum_{k=2}^N \abs{\xi_k\ptl_k \wh W(\xi)}}.$$
If $m=+\infty$,  $\xi_j\ptl_j \wh W(\xi)=0$ a.e. for all $j\in \{2,\dots,N\}$ and then \eqref{sigma-1-1}
is fulfilled. 
If $m<\infty$, we note that \eqref{ine:cor} 
implies $\alpha_c \leq m$, so that 
\bqq
\wh W(\xi)+\alpha_c  \sum_{k=2}^N\sigma_k  \abs{\xi_k\ptl_k \wh W(\xi)} 
\geq 
 \frac{m-\alpha_c}{N-1}\sum_{k=2}^N \abs{\xi_k\ptl_k \wh W(\xi)} \geq 0.
\eqq
Then Theorem~\ref{teo1} yields the conclusion.
\end{proof}
\begin{proof}[Proof of Corollary \ref{ultimo-cor}]
The proof is analogous to that of Corollaries \ref{cor-teo1} and \ref{cor-teo}. 
The only difference is that we invoke  Theorem~\ref{teorema} instead of Theorem~\ref{teo1} to conclude.
\end{proof}
 \bibliographystyle{abbrv}   
\bibliography{ref}
\end{document}